\numberwithin{equation}{section}
\newcommand{\one}{\ensuremath{\mathbf{1}}}
\newcommand{\moa}{\ensuremath{\operatorname{\mathsf{MOA}}}}
\newcommand{\imoa}{\ensuremath{\operatorname{\mathsf{IMA}}}}
\newcommand{\fmoa}{\ensuremath{\operatorname{\mathsf{FMA}}}}
\newcommand{\fmoap}{\ensuremath{\operatorname{\mathsf{FMA}_{\text{p}}}}}
\newcommand{\maxid}{\ensuremath{\operatorname{\mathsf{MMA}}}}
\newcommand{\mmoa}{\ensuremath{\operatorname{\mathsf{MMA}}}}
\newcommand{\gmoa}{\ensuremath{\operatorname{\mathsf{GMA}}}}
\newcommand{\ima}{\ensuremath{\operatorname{\mathsf{IMA}}}}
\newcommand{\fma}{\ensuremath{\operatorname{\mathsf{FMA}}}}
\newcommand{\mma}{\ensuremath{\operatorname{\mathsf{MMA}}}}
\newcommand{\gma}{\ensuremath{\operatorname{\mathsf{GMA}}}}
\newcommand{\dia}{\ensuremath{\operatorname{\mathsf{DMA}}}}
\newcommand{\Si}{\ensuremath{\operatorname{\mathbf{Si}}}}
\newcommand{\ulor}{\ensuremath{\;\underline{\lor}\;}}
\newcommand{\B}{\ensuremath{\mathfrak{B}}}
\newcommand{\A}{\ensuremath{\mathfrak{A}}}
\newcommand{\K}{\ensuremath{\mathbf{K}}}
\newcommand{\V}{\ensuremath{\mathbf{V}}}
\newcommand{\F}{\ensuremath{\mathfrak{F}}}
\renewcommand{\L}{\ensuremath{\mathcal{L}}}
\newcommand{\Q}{\ensuremath{\mathbf{Q}}}
\renewcommand{\phi}{\varphi}
\renewcommand{\epsilon}{\varepsilon}
\newcommand{\restrict}{\mathop{\upharpoonright}}
\DeclareMathOperator{\Eq}{\mathbf{Var}}
\DeclareMathOperator{\QEq}{\mathbf{QVar}}
\DeclareMathOperator{\ult}{Ult}
\DeclareMathOperator{\Ult}{Ult}
\DeclareMathOperator{\At}{At}
\newcommand{\Log}{\ensuremath{L}}
\newcommand{\Bc}{\ensuremath{B^{\text{c}}}}
\newcommand{\Bo}{\ensuremath{B^{\text{o}}}}
\newcommand{\Cm}{\ensuremath{\operatorname{{\mathsf{Cm}}}}}   
\newcommand{\Lan}{\ensuremath{\mathsf{Lan}}}
\newcommand{\df}{\ensuremath{\overset{\mathrm{df}}{=}}}
\newcommand{\aright}{``$\Implies$'': \ }
\newcommand{\aleft}{``$\Leftarrow$'': \ }
\newcommand{\klam}[1]{\ensuremath{\langle #1 \rangle}}
\newcommand{\set}[1]{\ensuremath{\{#1\}}}
\newcommand{\z}{\emptyset}
\newcommand{\tand}{\text{ and }}
\newcommand{\tor}{\text{ or }}
\newcommand{\tiff}{if and only if \ }
\newcommand{\Iff}{\Longleftrightarrow}
\newcommand{\Iffdf}{\overset{\mathrm{df}}{\Longleftrightarrow}}
\newcommand{\Implies}{\ensuremath{\Rightarrow}}
\renewcommand{\implies}{\rightarrow}
\newcommand{\conv}[1]{#1\ensuremath{~\breve{}~}}
\newcommand{\poss}[1]{\ensuremath{\klam{ #1 }}}
\newcommand{\sbe}{``$\subseteq$'': \ }
\newcommand{\spe}{``$\supseteq$'': \ }
\newcommand{\wlg}{w.l.o.g.\ }
\newcommand{\card}[1]{\vert #1 \vert}
\newcommand{\Var}{\ensuremath{\mathsf{Var}}}
\newcommand{\Fml}{\ensuremath{\mathsf{Fml}}}
\newcommand{\Model}{\ensuremath{\mathfrak M}}
 \newcommand{\sub}{\mathbf{S}}
  \renewcommand{\hom}{\mathbf{H}}
 \newcommand{\prodsu}{\mathbf{P_u}}
\newcommand{\dom}{\operatorname{dom}}
\newcommand{\da}[1]{\ensuremath{\mathop{\downarrow}#1}}  
\newcommand{\ua}[1]{\ensuremath{\mathop{\uparrow}#1}}  
\renewcommand{\1}{\ensuremath{\mathbf{1}}}
\renewcommand{\2}{\ensuremath{\mathbf{2}}}
\newcommand{\onto}{\twoheadrightarrow}
\theoremstyle{plain}
\newtheorem{theorem}{Theorem}[section]
\newtheorem{lemma}[theorem]{Lemma}
\newtheorem{corollary}[theorem]{Corollary}
\theoremstyle{definition}
\newtheorem{example}[theorem]{Example}
\numberwithin{equation}{section}
\title{Closure algebras of depth two with extremal relations:
Their frames, logics, and structural completeness}
\author{Ivo D\"untsch \& Wojciech Dzik}
\address{Ivo D\"untsch\\
Computer Science Department\\
Brock University\\	
St Catharines, Ontario\\
Canada}
\email{\href{mailto:duentsch@brocku.ca}{duentsch@brocku.ca}}
\address{Wojciech Dzik \\
Institute of Mathematics \\
University of Silesia \\
Katowice, Poland,
}
\email{\href{mailto:wojciech.dzik@us.edu.pl}{wojciech.dzik@us.edu.pl}}
\begin{document}
\maketitle

\hfill{\emph{Dedicated to our dear colleague Ivo G. Rosenberg}}

\begin{abstract}
\noindent We consider varieties generated by finite closure algebras whose canonical relations have two levels, and whose restriction to a level is an ``extremal'' relation, i.e. the identity or the universal relation. The corresponding logics have frames of depth two, in which a level consists of a set of simple clusters or of one cluster with one or more elements.
\end{abstract}


\section{Introduction}\label{sec:intro}

In~\cite{dd21} we have investigated \emph{ideal algebras} $\klam{B,f}$ which are closure algebras in which the set $B^{\text{c}}$ of closed elements is an ideal with the top element added. Our starting point was the unary discriminator $f^\one$, for which
\begin{gather}\label{fone}
f^\one(x) \df
\begin{cases}
0, &\text{if } x = 0, \\
1, &\text{otherwise}.
\end{cases}
\end{gather}
$f^\one$ is the largest element in the additive semilattice of modal operators on $B$ which was investigated in~\cite{ddo19}, and the set \Bc\ of its closed elements is $\set{0,1}$; in other words, $\klam{B,f^\one}$ is an ideal algebra with associated ideal $\set{0}$. The other extreme is the closure algebra in which every element is closed, i.e. where $f$ is the identity, and $B^{\text{c}} = I = B$. It turned out that the equational class generated by all ideal algebras is a locally finite positive universal class, and that the canonical  frame of a non-simple finite ideal algebra has depth two and consists of a set of simple clusters on the first level and a single cluster on the second level; in other words, the canonical frame relation restricted to the lower level is the identity and its restriction to the upper level is the universal relation. 

In the present paper we investigate  closure algebras related to ideal algebras and their logics. Our first case are closure algebras in which the set of closed elements is a filter $F$ with the smallest element added, followed by two other classes of algebras for which the set of closed elements may also be  related to ideals or filters. All these have the common property that their depth is at most two, and that the canonical relation of a finite algebra consists of at most two levels in which the restriction of the frame relation to a level is either universal or the identity.

The structure of the paper is as follows: After the introductory section, we briefly recall some facts about ideal algebras and their logic, followed by a section on filter algebras and their logic. Then we consider the remaining cases for algebras and frames of depth two with extremal relations and their logics. These sections are followed by a section in which we exhibit meet and join  of the varieties generated by the classes of algebras we have considered.  We close with some remarks on quasivarieties of algebras of depth two and structural completeness.

\section{Notation and First Definitions}

A \emph{frame} is a structure $\F = \klam{W,R}$, where $W$ is a nonempty set, and $\mathrel{R}$ is a binary relation on $W$. The identity relation on $W$ is denoted by $1'$. For $x \in W$ we set $R(x) \df \set{y: x\mathrel{R}y}$. The \emph{converse of $R$} is the relation $\conv{R} \df \set{\klam{y,x}: x\mathrel{R}y}$. $R$ is called \emph{convergent} if $x\mathrel{R}y$ and $x\mathrel{R}z$ imply the existence of some $w \in W$ such that $y\mathrel{R}w$ and $z\mathrel{R}w$. $R$ is called \emph{directed}, if for all $x,y \in W$ there is some $z \in W$ such that $x\mathrel{R}z$ and $y\mathrel{R}z$.
A reflexive and transitive relation is called a \emph{quasiorder}. An antisymmetric quasiorder is called a \emph{partial order}. A partial order can be obtained from a quasiorder $R$ on the classes of the equivalence relation defined by $x\mathrel{\theta_R} y \Iffdf x\mathrel{R}y \tand y\mathrel{R}x$, sometimes called the \emph{$T_0$ quotient of $R$}. Following~\cite[p. 75f]{seg71}, we call the classes of $\theta_R$ \emph{clusters}. A cluster is \emph{simple}, if $\card{\theta_R(x)} = 1$, otherwise it is called \emph{proper}. The relation $\leq_R$  defined by $\theta_R(x) \leq_R \theta_R(y) \Iffdf x\mathrel{R}y$ is a partial order. If $W$ is finite, then $\leq_R$ has minimal and maximal elements, i.e. a smallest and a largest level. A frame is called a \emph{frame of depth $n, 1 \leq n$}, if the length of any maximal chain of $\leq_R$ is at most $n$, and this is attained for some maximal chain.

An algebra is called \emph{trivial}, if it has exactly one element. If $\A$ and $\B$ are algebras of the same type, we write $\A \mathrel{\leq} \B$, if $\A$ is a subalgebra of $\B$. $\mathbf{Sub}(\B)$ denotes the collection of subalgebras of $\B$.

As no generality is lost, we shall tacitly assume that a class of algebras is closed under isomorphic copies. If no confusion can arise, we will often refer to an algebra simply by its base set. If $\mathbf K$ is a class of algebras of the same similarity type, we denote by $\mathbf{H(K)}$ the collection of all homomorphic images of $\mathbf K$, by $\mathbf{S(K)}$ the collection of all subalgebras of $\mathbf K$, by $\mathbf{P(K)}$ the collection of all products of elements of $\mathbf K$, by $\mathbf{Pu}(\K)$ the class of all ultraproducts of members of \K, and by $\mathbf{Si}(\K)$ the class of all subdirectly irreducible members of \K.  The equational class (variety) $\mathbf{HSP(K)}$ generated by $\mathbf K$ is denoted by $\Eq(\mathbf K)$. It is well known that an equational class is generated by its finitely generated members. A class of algebras is called \emph{locally finite} if every finitely generated member is finite. Thus, a locally finite variety is generated by its finite members. A variety $\V$ is called \emph{tabular}, if it is generated by a finite algebra, and \emph{pretabular} if it is not tabular, but every proper subvariety is tabular~\cite[p. 104]{Blok1980-2}.

A \emph{quasivariety} is a class $\mathbf{K}$ of algebras which contains a trivial algebra, and for which $\mathbf{SPPu}(\K) = \K$. It is well known that $\K$ is a quasivariety \tiff it can be axiomatized by quasiidentities, i.e. by sentences of the form $(\tau_1 = \sigma_1 \; \& \ldots \& \; \tau_n = \sigma_n) \Rightarrow \tau = \sigma$, see~\cite[Theorem 2.25]{bs_ua}. The quasivariety generated by $\K$ is denoted by $\QEq(\K)$.

Throughout, $\B = \klam{B,+,\cdot,-,0,1}$  is a Boolean algebra (BA) with at least two elements. We shall usually identify a Boolean algebra with its underlying set, so that we may write $B$ instead of \B. If $M \subseteq B$, then $M^+ \df \set{x \in M: x \neq 0}$, and $-M \df \set{-x:x \in M}$. If $a \in B$, then $\da{a} \df \set{x: x \leq a}$ is the ideal generated by $a$, and $\ua{a} \df \set{x: x \geq a}$ is the filter generated by $a$. Moreover, $\At(B)$ denotes the set of atoms of $B$, and $\Ult(B)$ is the set of ultrafilters of $B$.

A \emph{modal operator} on $B$ is a mapping $f:B \to B$ which satisfies, for all $a,b \in B$,
\begin{xalignat*}{2}
f(0) &= 0, &&\text{normality}, \\
f(a+b) &= f(a) + f(b), &&\text{additivity.}
\end{xalignat*}
If $f$ is a modal operator, $\klam{B,f}$ is called a \emph{modal algebra}. Modal algebras were investigated by J{\'o}nsson and Tarski~\cite{jt51} under the name of \emph{Boolean algebras with operators}, where many of their properties can be found. The \emph{dual of a modal operator $f$} is the function $f^\partial: B \to B$, defined by $f^\partial(a) = -f(-a)$. An ideal $I$ of $B$ is \emph{closed}, if $f(a) \in I$ for every $a \in I$, and a filter $F$ is called \emph{open}, if $f^\partial(a) \in F$ for all $a \in F$. We denote the variety of modal algebras by \moa, and the lattice of its subvarieties by $\Lambda(\moa)$; join and meet in $\Lambda(\moa)$ are denoted by $\lor$ and by $\land$, respectively. The class of algebras of the form $\klam{B,f^\one}$ is denoted by \dia, see \eqref{fone}. It is well known that \dia\ is the class of simple monadic algebras, see e.g. \cite{monk70}.

We shall frequently make use of an instance of J{\'o}nsson's Lemma:
\begin{lemma}\label{lem:jon} \cite[Corollary 3.2]{jon67}
If $\K$ is a class of modal algebras, then $\Si\Eq(\K) \subseteq \hom\sub\prodsu(\K)$.
\end{lemma}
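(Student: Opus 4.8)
The plan is to prove this as the instance of J{\'o}nsson's Lemma appropriate to the congruence-distributive variety $\moa$, via the classical ultrafilter argument. First I would record that $\moa$ is congruence distributive: every modal algebra has a Boolean algebra reduct, and the Boolean term $m(x,y,z)\df(x\cdot y)+(y\cdot z)+(x\cdot z)$ is a majority term, since $m(x,x,y)=m(x,y,x)=m(y,x,x)=x$ holds already in the two-element Boolean algebra and $m$ belongs to the modal signature. By J{\'o}nsson's criterion the presence of a majority term forces every variety of Boolean algebras with operators, in particular $\moa$, to be congruence distributive, so $\mathrm{Con}(\B)$ is a distributive lattice for every modal algebra $\B$. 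I will use the standard consequence that in a distributive lattice a completely meet-irreducible element $\psi$ is meet-prime: if $\alpha\cap\beta\subseteq\psi$ while $\alpha\not\subseteq\psi$ and $\beta\not\subseteq\psi$, then $\psi=(\psi\vee\alpha)\cap(\psi\vee\beta)$ would exhibit $\psi$ as a meet of two strictly larger congruences.

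Next, let $\A\in\Si\Eq(\K)$. Since $\Eq(\K)=\hom\sub\mathbf{P}(\K)$, there are modal algebras $\A_i\in\K$ $(i\in I)$, a subalgebra $\B\leq\prod_{i\in I}\A_i$, and $\psi\in\mathrm{Con}(\B)$ with $\A\cong\B/\psi$; as $\A$ is subdirectly irreducible, $\psi$ is completely meet-irreducible in $\mathrm{Con}(\B)$, hence meet-prime. For $X\subseteq I$ let $\eta_X$ be the kernel of the restriction to $\B$ of the projection $\prod_{i\in I}\A_i\to\prod_{i\in X}\A_i$, so that $\eta_I$ is the identity, $\eta_\emptyset$ the universal relation, and $\eta_{X\cup Y}=\eta_X\cap\eta_Y$. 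Put $\Gamma\df\set{X\subseteq I:\eta_X\subseteq\psi}$. This family is upward closed, contains $I$, and satisfies $X\cup Y\in\Gamma\Implies X\in\Gamma\tor Y\in\Gamma$ because $\eta_{X\cup Y}=\eta_X\cap\eta_Y$ and $\psi$ is meet-prime. Hence the complement of $\Gamma$ in $\mathcal P(I)$ is downward closed and closed under finite unions, i.e.\ an ideal of the Boolean algebra $\mathcal P(I)$; it is proper since $I\in\Gamma$, and nonempty in the strong sense that $\emptyset$ belongs to it, since $\A$ nontrivial gives $\eta_\emptyset\not\subseteq\psi$. Extending this ideal to a maximal (hence prime) ideal $\mathcal M$ and setting $U\df\set{I\setminus X:X\in\mathcal M}$ produces an ultrafilter on $I$; primeness of $\mathcal M$ forces, for each $Y\in U$, that $Y\notin\mathcal M$ and therefore $Y\in\Gamma$, i.e.\ $\eta_Y\subseteq\psi$ for all $Y\in U$.

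To conclude, let $\bar\eta_U\df\set{\klam{a,b}\in B^2:\set{i\in I:a_i=b_i}\in U}$ be the restriction to $\B$ of the ultraproduct congruence on $\prod_{i\in I}\A_i$. A routine check gives $\bar\eta_U=\bigcup_{Y\in U}\eta_Y$, so $\bar\eta_U\subseteq\psi$ by the previous step. The natural map $\B\to\prod_{i\in I}\A_i/U$ has kernel $\bar\eta_U$, whence $\B/\bar\eta_U\into\prod_{i\in I}\A_i/U\in\prodsu(\K)$ and so $\B/\bar\eta_U\in\sub\prodsu(\K)$. Finally $\bar\eta_U\subseteq\psi$ yields a surjection $\B/\bar\eta_U\onto\B/\psi\cong\A$, so $\A\in\hom\sub\prodsu(\K)$.

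The only input of genuine substance is the congruence distributivity of $\moa$; once it is in place, everything else is bookkeeping. The fiddliest piece of that bookkeeping is orienting the ideal/ultrafilter correspondence in the second step correctly, so that the ultrafilter $U$ actually lies inside $\Gamma$ (equivalently, so that $\bar\eta_U\subseteq\psi$) rather than inside its complement.
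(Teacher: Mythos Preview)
Your argument is correct and follows the classical J{\'o}nsson route: establish congruence distributivity via the Boolean majority term, use that a completely meet-irreducible congruence in a distributive lattice is meet-prime, build the family $\Gamma$, extend its complement to a maximal ideal, and read off the ultrafilter. The bookkeeping in the last two paragraphs is sound; in particular, your verification that $U\subseteq\Gamma$ (because the complement of $\Gamma$ sits inside $\mathcal M$ and $U$ is disjoint from $\mathcal M$) and the identity $\bar\eta_U=\bigcup_{Y\in U}\eta_Y$ both go through exactly as you say.

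There is nothing to compare against, however: the paper does not prove Lemma~\ref{lem:jon} at all. It is stated with a citation to \cite[Corollary~3.2]{jon67} and used as a black box throughout (e.g.\ in Corollaries~\ref{cor:sif} and~\ref{cor:sifmoa} and Theorem~\ref{cor:sigmoa}). So your write-up supplies a full proof where the paper gives none; it is faithful to J{\'o}nsson's original argument specialised to modal algebras.
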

In particular, if $\K$ is axiomatizable by first order positive universal sentences, then $\Si\Eq(\K) \subseteq \K$, see e.g.~ \cite[Chapter 5]{ck71}.

If $\B =\klam{B,f}$ is a modal algebra, then the structure $\klam{\ult(B), R_f}$ with
\begin{gather*}
u\mathrel{R_f} v \Iff f[v] \subseteq u
\end{gather*}
is called the \emph{canonical frame} or \emph{ultrafilter extension} of \B. For finite algebras this has a simple form: If $a,b \in \At(B)$, and $F_a, F_b$ are the principal ultrafilters generated by $a$, respectively, by $b$, then
\begin{gather}\label{RfAt}
F_a \mathrel{R_f} F_b \Iff \set{f(p): b \leq p} \subseteq F_a \Iff a \leq f(b).
\end{gather}
If $\F = \klam{W,R}$ is a frame, then the structure $\Cm(\F) \df \klam{2^W, \poss{R}}$ is called the \emph{complex algebra of $\F$}. Here, for $X \subseteq W$,
\begin{gather*}
\poss{R}(X) \df \set{x \in W: R(x) \cap X \neq \z}.
\end{gather*}
\begin{theorem}\label{thm:framealg} \cite[Theorem 3.9]{jt51}
\begin{enumerate}
\item $\Cm(\F)$ is  complete and atomic, and $\poss{R}$  is a completely additive normal operator.
\item If $\B = \klam{B,f}$ is a complete atomic Boolean algebra and $f$ is completely additive, then $\B$ is isomorphic to some $\Cm(\klam{W,R})$.
\end{enumerate}
\end{theorem}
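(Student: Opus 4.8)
The statement is the classical J\'onsson--Tarski representation for complete atomic modal algebras, so the plan is to give a direct, self-contained argument rather than anything new. For part (1), I would first record that $2^W$ is a complete atomic Boolean algebra whose atoms are the singletons and in which arbitrary joins are unions. Normality is the observation $\poss{R}(\z) = \set{x : R(x)\cap\z\neq\z} = \z$, and complete additivity is immediate from the equivalence $R(x)\cap\bigcup_i X_i\neq\z \Iff (\exists i)\,R(x)\cap X_i\neq\z$, which gives $\poss{R}(\bigcup_i X_i) = \bigcup_i\poss{R}(X_i)$. That $\poss{R}$ is an operator, i.e. finitely additive, is the special case of this.

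For part (2), I would start from Tarski's theorem that a complete atomic Boolean algebra $\B$ is isomorphic to $2^W$, where $W\df\At(B)$, via $h\colon a\mapsto\set{p\in\At(B):p\leq a}$; this already handles the Boolean reduct, so only the modal clause is left. Define $R$ on $W$ by $p\mathrel{R}q\Iff p\leq f(q)$, in agreement with the atomic case of \eqref{RfAt}. It then suffices to check $h(f(a)) = \poss{R}(h(a))$ for every $a\in B$, which, after unwinding the definitions, reduces to the equivalence $p\leq f(a)\Iff(\exists q\in\At(B))(q\leq a\tand p\leq f(q))$. The implication from right to left uses only that additivity of $f$ forces monotonicity ($a\leq b$ gives $f(b)=f(a)+f(b)\geq f(a)$). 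For the converse, atomicity gives $a=\bigvee\set{q\in\At(B):q\leq a}$, complete additivity of $f$ gives $f(a)=\bigvee\set{f(q):q\in\At(B),\,q\leq a}$, and then the atom $p\leq f(a)$ must lie below one of the joinands $f(q)$.

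The only point needing care --- and the ``main obstacle'', such as it is --- is this last step: an atom below a join need not lie below a single joinand in an arbitrary complete Boolean algebra. Here it is legitimate because complete atomic Boolean algebras are completely distributive (being power-set algebras), so $p = p\cdot\bigvee_i b_i = \bigvee_i (p\cdot b_i)$; since $p$ is an atom, $p\cdot b_i\in\set{0,p}$, and the join cannot be $0$, so $p\cdot b_i = p$, i.e. $p\leq b_i$, for some $i$. With this in hand $h$ is a modal isomorphism, so $\B\cong\Cm(\klam{\At(B),R})$, completing part (2).
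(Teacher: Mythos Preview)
Your argument is correct and is essentially the classical J\'onsson--Tarski proof. Note, however, that the paper does not give its own proof of this theorem: it is stated with a citation to \cite[Theorem 3.9]{jt51} and used as background, so there is no in-paper argument to compare against. Your proposal supplies precisely the standard representation argument one would find in the cited source.
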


Two modal operators $f,g$ on $B$ are called \emph{conjugate}, if, for all $a,b \in B$,
\begin{gather}\label{def:conj}
f(a) \cdot b = 0 \Iff g(b) \cdot a = 0.
\end{gather}

\begin{theorem}\label{thm:conj}  \cite[Theorem 3.6.]{jt51}
If $f,g$ are conjugate modal operators on $B$, then $f$ and $g$ are completely additive, and $R_f$ is the converse of $R_g$. Conversely, if $R$ is a binary relation on $W$, then $\poss{R}$ and $\poss{\conv{R}}$ are conjugate.
\end{theorem}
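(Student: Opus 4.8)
The statement splits into two independent halves, and the plan is to dispatch the second (the complex-algebra direction) by a direct first-order unwinding, and the first by leveraging the conjugacy identity~\eqref{def:conj} twice.

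For the converse half, let $R$ be a binary relation on $W$ and $X,Y \subseteq W$. By definition $\poss{R}(X) \cap Y \neq \z$ holds \tiff there are $x \in Y$ and $y \in X$ with $x \mathrel{R} y$; rewriting $x \mathrel{R} y$ as $y \mathrel{\conv{R}} x$, this is exactly the assertion that $\poss{\conv{R}}(Y) \cap X \neq \z$. Negating both sides gives the conjugacy condition $\poss{R}(X) \cdot Y = 0 \Iff \poss{\conv{R}}(Y) \cdot X = 0$ in the complex algebra (where meet is intersection and $0$ is $\z$). There is no obstacle here.

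For the forward half, I would first note that any modal operator $f$ is monotone: $a \leq b$ gives $f(b) = f(a+b) = f(a)+f(b) \geq f(a)$. Hence, whenever $\sup A$ exists, $f(\sup A)$ is an upper bound of $f[A]$. For the reverse inequality, let $c$ be any upper bound of $f[A]$ and set $b \df -c$. Then $f(a) \cdot b = 0$ for every $a \in A$, so by~\eqref{def:conj} $g(b) \cdot a = 0$, i.e. $a \leq -g(b)$, for every $a \in A$; therefore $\sup A \leq -g(b)$, that is $g(b) \cdot \sup A = 0$. Applying~\eqref{def:conj} once more, now with $\sup A$ in the role of $a$, yields $f(\sup A) \cdot b = 0$, i.e. $f(\sup A) \leq c$. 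Thus $f(\sup A) = \sup f[A]$, so $f$ is completely additive, and by the symmetry of~\eqref{def:conj} so is $g$. For $R_f = \conv{R_g}$ I would isolate the auxiliary characterization: for ultrafilters $u,v$, one has $f[v] \subseteq u$ \tiff $f(a) \cdot b \neq 0$ for all $a \in v$ and $b \in u$. Indeed, if $f[v] \subseteq u$ then $f(a) \in u$ and $b \in u$ force $f(a)\cdot b \in u$, hence nonzero; conversely, if $f(a) \notin u$ for some $a \in v$, then taking $b \df -f(a) \in u$ gives $f(a)\cdot b = 0$. The same characterization holds for $g[u] \subseteq v$ with the roles of the two ultrafilters and of $f,g$ exchanged. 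Since~\eqref{def:conj} gives $f(a)\cdot b \neq 0 \Iff g(b)\cdot a \neq 0$, the two membership conditions coincide; hence $u \mathrel{R_f} v \Iff v \mathrel{R_g} u$, which is the claim.

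The only real (and mild) obstacle is the double use of conjugacy in the complete-additivity step: one must be careful to apply~\eqref{def:conj} first with the pairs $(a,-c)$ for $a \in A$, and then with the single pair $(\sup A, -c)$, and to note that we only assume $\sup A$ to exist, not that it is preserved. Everything else—the ultrafilter bookkeeping and the unwinding of $\poss{\cdot}$—is routine.
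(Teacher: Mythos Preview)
Your argument is correct in all three parts: the complex-algebra direction is a clean unwinding of the definition of $\poss{R}$, the complete-additivity step via the double application of~\eqref{def:conj} is the standard (and sound) J\'{o}nsson--Tarski trick, and the ultrafilter characterization you isolate does yield $R_f = \conv{R_g}$. The paper itself gives no proof of this theorem at all---it is stated with a bare citation to~\cite[Theorem~3.6]{jt51}---so there is nothing to compare your approach against; your write-up is essentially the classical proof one finds in the cited source.
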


A modal operator $f$ is called a \emph{closure operator}~\cite{mkt44}, if it satisfies
\begin{enumerate}[{Cl}$_1$]
\item $a \leq f(a)$,
\item $f(f(a)) = f(a)$.
\end{enumerate}
In this case, $\klam{B,f}$ is called a \emph{closure algebra}; observe that the complex algebra of a quasiordered frame is a closure algebra. Closure algebras have equationally definable congruences~\cite[Example 18, p. 204f]{bp82}. It is well known that the congruences of a closure algebra correspond to the closed ideals of $B$.  The dual $\klam{B,g}$ of a closure algebra is called an \emph{interior algebra}, and $g$ an interior operator. Thus, $g(1) = 1$, $g$ is multiplicative, and $g$ satisfies
\begin{enumerate}[{Int}$_1$]
\item $g(a) \leq a$,
\item $g(g(a)) = g(a)$.
\end{enumerate}
The classes of closure algebras and interior algebras are definitionally  equivalent.  If $\klam{B,f}$ is a closure algebra we denote its corresponding interior algebra $\klam{B, f^\partial}$ by $B^\partial$. An element $x$ of a closure algebra $\klam{B,f}$ is called \emph{closed} if $f(x) = x$, and \emph{open}, if $f^\partial(x) = x$. The trivial algebra is denoted by \1, and the two element closure algebra is denoted by \2.


 For later use we mention
\begin{lemma}\label{lem:si} \cite{Blok76,blok80a} 
\begin{enumerate}
\item A closure algebra is subdirectly irreducible \tiff it has a smallest nontrivial closed ideal.
\item If~$\V_1$ and $\V_2$ are varieties of closure algebras, then $\Si(\V_1 \lor \V_2) = \Si(\V_1) \cup \Si(\V_2)$.
\end{enumerate}
\end{lemma}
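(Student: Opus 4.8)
The plan is to treat the two parts separately; both rest on the correspondence, recalled just above the statement, between the congruences of a closure algebra $\klam{B,f}$ and the closed ideals of $B$, together with the instance of J\'onsson's Lemma given in Lemma~\ref{lem:jon}.

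For part~(1) I would use that this correspondence is not merely a bijection but an isomorphism between the congruence lattice of $\klam{B,f}$ and the lattice of closed ideals of $B$ ordered by inclusion, the map being $\theta \mapsto \set{x : x\mathrel{\theta}0}$ and carrying the identity congruence $\Delta$ to $\set{0}$. It is a standard fact of universal algebra that an algebra is subdirectly irreducible exactly when its congruence lattice has a least element among the congruences strictly above $\Delta$ (the monolith). Transporting this across the isomorphism, $\klam{B,f}$ is subdirectly irreducible iff the lattice of closed ideals has a least element strictly above $\set{0}$, that is, a smallest nontrivial closed ideal. The only point requiring a little care is to check that the correspondence is order-preserving in both directions; the rest is bookkeeping.

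For part~(2) the inclusion $\Si(\V_1) \cup \Si(\V_2) \subseteq \Si(\V_1 \lor \V_2)$ is immediate, since $\V_i \subseteq \V_1 \lor \V_2$ and subdirect irreducibility is an intrinsic property of an algebra, not relative to a variety. For the converse, let $\A \in \Si(\V_1 \lor \V_2)$. Because $\V_1 \lor \V_2 = \Eq(\V_1 \cup \V_2)$, Lemma~\ref{lem:jon} yields $\A \in \hom\sub\prodsu(\V_1 \cup \V_2)$, so it suffices to prove $\hom\sub\prodsu(\V_1 \cup \V_2) \subseteq \V_1 \cup \V_2$. The crucial step is that an ultraproduct of members of $\V_1 \cup \V_2$ again lies in $\V_1$ or in $\V_2$: given algebras $\A_i \in \V_1 \cup \V_2$ ($i \in I$) and an ultrafilter $U$ on $I$, split $I$ into the set $I_1$ of those $i$ with $\A_i \in \V_1$ and $I_2 = I \setminus I_1$ (so $\A_i \in \V_2$ for $i \in I_2$); exactly one of $I_1, I_2$ belongs to $U$, and $\prod_{U}\A_i$ is then isomorphic to the ultraproduct of the $\A_i$ over that block with respect to the induced ultrafilter, hence --- varieties being closed under ultraproducts --- a member of the corresponding $\V_j$. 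Since $\V_1$ and $\V_2$ are also closed under $\sub$ and under $\hom$, it follows that $\A \in \V_1 \cup \V_2$; being subdirectly irreducible, $\A \in \Si(\V_1) \cup \Si(\V_2)$.

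I do not expect a serious obstacle: both halves are well known and go back to Blok's work on interior algebras. The one thing to keep in view is that applying Lemma~\ref{lem:jon} silently uses that closure algebras form a congruence-distributive variety; this is automatic, since every variety of modal algebras has a lattice term-reduct and is therefore congruence-distributive, so nothing beyond the form in which Lemma~\ref{lem:jon} is already stated is required.
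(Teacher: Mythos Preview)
Your argument is correct on both counts and is the standard one. Note, however, that the paper does not give its own proof of this lemma: it is stated with a citation to \cite{Blok76,blok80a} and used as a black box, so there is nothing to compare your proposal against beyond observing that what you wrote is essentially the classical J\'onsson argument underlying those references.
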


The following result will be useful in the sequel:

\begin{theorem}\label{thm:bdcl} \cite[p. 190f]{bd75}
Suppose that $D$ is a bounded sublattice of $B$. Then, $D$ is the set of closed elements of a closure operator $f$ on $B$ \tiff  $\ua{b} \cap D$ has a smallest element for each $b \in B$. In this case, $f(b) = \min\;(\ua{b} \cap D)$.
\end{theorem}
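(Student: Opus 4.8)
The plan is to prove both directions of the equivalence, keeping in mind that $D$ is assumed to be a bounded sublattice of $B$, so in particular $0,1 \in D$ and $D$ is closed under finite meets and joins. For the direction ``$\Rightarrow$'': suppose $D = \{x \in B : f(x) = x\}$ for some closure operator $f$. Fix $b \in B$. I would show that $f(b)$ is the smallest element of $\ua{b} \cap D$. First, $f(b) \in D$ because $f(f(b)) = f(b)$ by $\text{Cl}_2$, and $b \leq f(b)$ by $\text{Cl}_1$, so $f(b) \in \ua{b} \cap D$. Next, if $c \in \ua{b} \cap D$, then $b \leq c$ and $f(c) = c$; monotonicity of $f$ (which follows from additivity: $x \leq y$ implies $f(y) = f(x) + f(y-x) \geq f(x)$) gives $f(b) \leq f(c) = c$. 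Hence $f(b) = \min(\ua{b} \cap D)$, which in particular shows the meet exists.

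For the direction ``$\Leftarrow$'': assume $\ua{b} \cap D$ has a smallest element for every $b \in B$, and define $f(b) \df \min(\ua{b} \cap D)$. I must check that $f$ is a closure operator and that $D$ is exactly its set of closed elements. The closure axioms are quick: $b \leq f(b)$ is immediate since $f(b) \in \ua{b}$; and $f(f(b)) = f(b)$ holds because $f(b) \in D$, so $f(b)$ is already the least element of $\ua{f(b)} \cap D$ (it lies in that set, and it is below every element of it since that set is contained in $\ua{f(b)} \subseteq \ua{b}$... more carefully, $f(b) \in \ua{f(b)} \cap D$ and is the minimum of the larger constraint set $\ua{b}\cap D$, hence certainly the minimum of the subset $\ua{f(b)} \cap D$). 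That $f(b) = b$ exactly when $b \in D$ is also clear: if $b \in D$ then $b \in \ua{b} \cap D$ and is trivially its least element; conversely if $f(b) = b$ then $b = f(b) \in D$.

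The step I expect to be the main obstacle is verifying that $f$ is a \emph{modal operator}, i.e. that $f(0) = 0$ (normality) and $f(a+b) = f(a) + f(b)$ (additivity) — these are exactly the properties not built into the definition of a closure operator but required for $\langle B,f\rangle$ to be a closure algebra in our sense. Normality is easy: $0 \in D$, so $f(0) = \min(\ua{0} \cap D) = 0$. For additivity, the inequality $f(a) + f(b) \leq f(a+b)$ follows from monotonicity of $f$ (a map defined as a minimum over an order-ideal-of-constraints is automatically monotone: $a \leq b$ shrinks $\ua{b}\cap D$ inside $\ua{a}\cap D$, so the min can only go up). For the reverse inequality $f(a+b) \leq f(a) + f(b)$, the key point is that $f(a) + f(b)$ lies in $\ua{(a+b)} \cap D$: it is in $D$ because $D$ is closed under joins (here is where the sublattice hypothesis is essential), and it is in $\ua{(a+b)}$ because $a \leq f(a)$ and $b \leq f(b)$ give $a + b \leq f(a) + f(b)$; therefore $f(a+b) = \min(\ua{(a+b)}\cap D) \leq f(a) + f(b)$. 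Combining the two inequalities yields additivity, and the proof is complete. \pfend
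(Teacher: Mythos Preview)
Your proof is correct in all details. Note, however, that the paper does not supply a proof of this theorem at all: it is quoted as a known result from Blok and Dwinger~\cite[p.~190f]{bd75} and used as a tool later on, so there is no in-paper argument to compare against. Your argument is the standard one and would serve perfectly well as a self-contained justification; the only minor stylistic point is that the parenthetical verifying $f(f(b))=f(b)$ is slightly tangled, though the ``more carefully'' clause straightens it out (since $\ua{f(b)}\cap D\subseteq \ua{b}\cap D$ and $f(b)$ lies in the smaller set while being the minimum of the larger one, it is the minimum of the smaller one as well).
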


The following property is decisive for the classes of algebras which we consider: A closure algebra $\klam{B,f}$ is said to have \emph{depth two}, if it satisfies

\begin{enumerate}[{B}$_1$]
\setcounter{enumi}{1}
\item $f(f^\partial(x) \cdot f(f^\partial(y)) \cdot -y) \leq x.$ \label{b2}
\end{enumerate}
The name comes from the fact that the canonical relation of a finite closure algebra satisfying $\mathrm{B}_{\ref{b2}}$ has depth two, see Table~\ref{tab:logcond}.

For unexplained notation we refer the reader to~\cite{bs_ua} for universal algebra, and to~\cite{kop89} for Boolean algebras. A thorough investigation of varieties of interior algebras is~\cite{Blok76}.

\section{Modal logics and their semantics}\label{sec:log}

Modal logic extends classical propositional logic by a unary logical connective $\Diamond$ representing ``possibility''. Formulas and terms are defined recursively in the usual way. The \emph{dual of $\Diamond$} is the operator $\Box$ (``necessity'') defined by $\Box \phi = \neg\Diamond(\neg\phi)$. A \emph{modal logic} $\Log$  is a set of modal formulas that contains all propositional tautologies and is closed under modus ponens and substitution. $\Log$ is \emph{normal}, if it contains the formula

\begin{enumerate}[\texttt{K}]
\item $\Box(p \implies q) \implies (\Box p \implies \Box q)$.
\end{enumerate}
and is closed under necessitation, i.e. $\phi \in L$ implies $\Box\phi \in L$. Equivalently, $\Log$ is normal if it contains the formulas $\Diamond \bot \leftrightarrow \bot$ and $\Diamond (\phi \lor \psi) \leftrightarrow \Diamond \phi \lor \Diamond \psi$. In the sequel a logic is assumed to be normal with a set $\Var$ of variables and a set \Fml\ of formulas in a suitable language \Lan. If $\Gamma \subseteq \Fml$, then the smallest normal logic containing $\Gamma$ is denoted by $\Log_\Gamma$. If $\Log \subseteq \Log'$, then $\Log'$ is called an \emph{extension of $\Log$}.  It is well known that the class of normal modal logics forms a lattice under $\subseteq$, dually isomorphic to the lattice of varieties of modal algebras, see e.g.~\cite{blok80a}.

A class $\mathsf M$ of modal algebras \emph{validates} or \emph{determines a logic $\Log$} if $\B \models \phi$ for all theorems $\phi$ of $\Log$ and all $\B \in \mathsf M$.  With some abuse of notation, we let $\Eq(\Log)$ be the class of modal algebras that satisfy all theorems of $\Log$. In the other direction, the set $\Delta_{\mathsf M}$ of all formulas valid in all members of $\mathsf M$ is a normal logic $\Log_{\mathsf M}$~\cite[p. 178]{gol89}. If $\mathsf{M} = \set{\B}$, then $\Log_{\text{\textsf{M}}} =  \Log_{\Eq(\set{\B})}$.

Next, we turn to frame semantics. Validity in a frame is defined as follows: A \emph{model $\Model$} (for $\Fml$) is a structure $\klam{W,R,v}$ where $\klam{W,R}$ is a frame, and $v: \Var \to 2^W$ is a valuation function which is extended over \Fml\ as follows:
\begin{align*}
v(\neg\phi) &= W \setminus v(\phi) = \set{w \in W: w \not\in v(\phi)}, \\
v(\phi \land \psi) &= v(\phi) \cap v(\psi),\\
v(\top)&= W, \\
v(\Diamond(\phi)) &= \set{w: (\exists u)[u \in v(\phi) \tand u\mathrel{R}w}. 
\end{align*}

A formula $\phi$ is \emph{true in the model $\Model = \klam{W,R,v}$}, written as $\Model \models_v \phi$, if $v(\phi) = W$. $\phi$ is \emph{true in the frame $\F = \klam{W,R}$}, written as $\F \models \phi$,  if $\klam{W,R,v} \models \phi$ for all valuations $v: \Fml \to 2^W$. If $K$ is a class of frames, we write $K \models \phi$ just in case $\F \models \phi$ for all $\F \in K$.
Furthermore, if $\Gamma$ is a set of formulas, then we write $\F \models \Gamma$ if $\F \models \phi$ for all $\phi \in \Gamma$, and we write $K \models \Gamma$, if $\F \models \Gamma$ for all $\F \in K$.

A frame \F\ \emph{determines the logic} $\Log_{\F} = \set{\phi: \F \models \phi}$, and a class $K$ of frames determines the logic $\Log_K = \set{\phi: K \models \phi} = \bigcap \set{\Log_{\F}: \F \in K}$. Conversely, if $\Gamma$ is a set of formulas, then $K_\Gamma \df \set{\F: \F \models \Gamma}$ is the class of frames that determine $\Gamma$; note that $K_\Gamma = K_{\Log_\Gamma}$. A class $K'$ of frames is called \emph{modally definable} if it is of the form $K_{\Log}$ for some  logic $\Log$, i.e. of the form $K_{\Log_{K'}}$.

A logic $\Log'$ is \emph{Kripke (or frame) complete}, if it is determined by a class $K'$ of frames, i.e. if $\Log' = \Log_{K'}$. This is true \tiff $\Log' = \Log_{K_{\Log'}}$. Although not all logics are Kripke complete, see e.g.~\cite{bs84} Section 19, and~\cite{blok80a},  in this paper we will deal only with Kripke complete logics.  Table \ref{tab:logcond} shows frame conditions for various extensions of $\K$. There, the axioms are named in \texttt{teletype}, and the corresponding logics generated by them are denoted in \textbf{boldface}.
\begin{table}[tb]
\caption{Modal logic axioms}\label{tab:logcond}
\centering
{\small
\begin{tabular}{llll}
Name & Axiom & Frame condition \\ \hline
\texttt{D} & $\Box p  \implies  \Diamond p $ & $R$ is serial, i.e. $\dom(R) = W$. \\
\texttt{T} & $p \implies  \Diamond p $ & $R$ is reflexive. \\
\texttt{4} & $\Diamond\Diamond p \implies  \Diamond p $ & $R$ is transitive.\\
\texttt{B} & $p \implies  \Box\Diamond p $ & $R$ symmetric\\
\texttt{B}$_2$ & $\Diamond(\Box q \land \Diamond\Box p \land \neg p) \implies q$  & $(x\mathrel{R}y \tand y\mathrel{R}z) \Implies (y\mathrel{R}x \tor z\mathrel{R}y)$. \\
\texttt{Dum} & $ \Box(\Box(p \implies \Box p) \implies p ) \land \Diamond\Box p \; \implies p$ & $R$ is a quasiorder in which all but the final cluster is simple.  \\
\texttt{Grz} & $\Box(\Diamond(p \land  \Diamond(-p))\lor p) \implies  p$ &There is no infinite strictly ascending $R$-chain. \\
\texttt{M} or \texttt{.1} & $\Box\Diamond p \implies  \Diamond\Box p $  & \emph{None - the logic \textbf{KM} is not Kripke complete.} \\%
\texttt{G} or \texttt{.2} & $\Diamond\Box p  \implies  \Box\Diamond p$ & $R$ is convergent \\
\texttt{H} or \texttt{.3}
& $\Box(\Box p \implies q) \lor \Box(\Box q \implies p)$  & $(x\mathrel{R}y \tand x\mathrel{R}z) \Implies (y\mathrel{R}z \tor z \mathrel{R} y)$
\\
\texttt{R1} & $p  \land  \Diamond\Box p \implies  \Box p $ & $(x\mathrel{R}y \tand x \neq y \tand x\mathrel{R}z) \Implies z\mathrel{R}y$
\end{tabular}
}
\end{table}
Since the nomenclature used in the literature is not uniform, we list some alternative denotation:
\begin{center}
$\mathbf{S4} \df \mathbf{KT4}$, \
\textbf{S4.1} \df\ \textbf{S4M},\
\textbf{S4.2} \df\ \textbf{S4G}, \
\textbf{S4.3} \df\ \textbf{S4H}, \
\textbf{S4.4} \df\ \textbf{S4R1}, \
\textbf{S5} \df\ \textbf{S4B}.
\end{center}

We note that the \textbf{T4} extension of \textbf{KM}, i.e. \textbf{S4M}, is Kripke complete: If $R$ is a quasiorder, then the condition that verifies \textbf{M} is $(\forall x)(\exists y)[x\mathrel{R}y \tand  (\forall z)(y\mathrel{R}z \Implies y = z)]$.  As the logics whose canonical frames have depth two are of particular importance in the sequel, we shall mention the following:

\begin{theorem}\label{lem:423}
$\mathbf{S4.2B_2} = \mathbf{S4.3B_2}$.
\end{theorem}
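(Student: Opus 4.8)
The plan is to argue semantically, relying on the fact --- in line with the paper's standing restriction to Kripke complete logics --- that $\mathbf{S4.2B_2}$ and $\mathbf{S4.3B_2}$ are Kripke complete and are determined by the first-order frame conditions listed in Table~\ref{tab:logcond}. Thus $\mathbf{S4.2B_2}$ is determined by the class of quasiorders that are convergent and satisfy the \texttt{B}$_2$-condition $(x\mathrel{R}y \tand y\mathrel{R}z) \Implies (y\mathrel{R}x \tor z\mathrel{R}y)$, while $\mathbf{S4.3B_2}$ is determined by the class of quasiorders satisfying both the \texttt{H}-condition $(x\mathrel{R}y \tand x\mathrel{R}z) \Implies (y\mathrel{R}z \tor z\mathrel{R}y)$ and the \texttt{B}$_2$-condition. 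One inclusion is free: since $\mathbf{S4.2} \subseteq \mathbf{S4.3}$ and adding the fixed axiom \texttt{B}$_2$ to a sublogic yields a sublogic, $\mathbf{S4.2B_2} \subseteq \mathbf{S4.3B_2}$. (Frame-theoretically this also just says that on a reflexive relation the \texttt{H}-condition implies convergence, witnessed by $z$ whenever $y\mathrel{R}z$.) By completeness, the reverse inclusion reduces to the purely relational claim that every convergent quasiorder $R$ satisfying the \texttt{B}$_2$-condition also satisfies the \texttt{H}-condition; equivalently, that the two elementary frame classes above coincide.

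To verify the claim, assume $x\mathrel{R}y$ and $x\mathrel{R}z$, and pick by convergence a $w$ with $y\mathrel{R}w$ and $z\mathrel{R}w$. Applying \texttt{B}$_2$ to $x\mathrel{R}y$, $y\mathrel{R}w$ gives $y\mathrel{R}x$ or $w\mathrel{R}y$; in the second case $z\mathrel{R}w$ and $w\mathrel{R}y$ give $z\mathrel{R}y$ by transitivity, and we are done. So assume $y\mathrel{R}x$ and apply \texttt{B}$_2$ to $x\mathrel{R}z$, $z\mathrel{R}w$: this gives $z\mathrel{R}x$ or $w\mathrel{R}z$; in the latter case $y\mathrel{R}w$ and $w\mathrel{R}z$ give $y\mathrel{R}z$. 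In the remaining case we have $x\mathrel{R}y$, $y\mathrel{R}x$, $x\mathrel{R}z$ and $z\mathrel{R}x$, so $x$, $y$, $z$ lie in one $R$-cluster and in particular $y\mathrel{R}z$. Thus the \texttt{H}-condition holds in all cases, proving the claim; dually, the varieties of closure algebras determined by the two logics coincide.

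I do not expect a genuine obstacle: the heart of the matter is the short case analysis above, which is elementary. The two points to attend to are (i) the appeal to Kripke completeness of $\mathbf{S4.2B_2}$ and $\mathbf{S4.3B_2}$ and to the precise form of their frame conditions (convergence as ``directed from a point'', \texttt{B}$_2$ exactly as in Table~\ref{tab:logcond}), and (ii) spotting that the only branch of the argument not closed immediately by transitivity is precisely the one collapsing $x,y,z$ into a single cluster --- which fits the intuition that \texttt{B}$_2$ bounds the depth by two.
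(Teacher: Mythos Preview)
Your proof is correct and follows essentially the same approach as the paper: both reduce the equality to a frame-theoretic claim via Kripke completeness and then verify, by the same use of convergence and the \texttt{B}$_2$-condition, that a convergent \texttt{B}$_2$-quasiorder satisfies the \texttt{H}-condition. The case analysis is organized slightly differently (the paper first disposes of the cases $y\mathrel{R}x$ or $z\mathrel{R}x$ and then invokes convergence, whereas you invoke convergence first and branch on the \texttt{B}$_2$-disjunction), but the content is identical.
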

\begin{proof}
Since both logics are Kripke complete, it is enough to show that for a quasiorder $R$ of depth two, \texttt{.2} holds \tiff \texttt{.3} holds.

\aright Suppose that $R$ is convergent, and let $x\mathrel{R}y$ and $x\mathrel{R}z$; we need to show that $y \mathrel{R} z$ or $z \mathrel{R} y$. If $y \mathrel{R} x$, then $y \mathrel{R} z$, since $R$ is transitive; similarly, $z \mathrel{R} x$ implies $z \mathrel{R} y$. Suppose that $y \mathrel{(-R)} x$ and $z \mathrel{(-R)} x$. Since $R$ is convergent, there is some $w$ such that $y \mathrel{R} w$ and $z \mathrel{R} w$; furthermore, \texttt{B}$_2$ and our assumption imply $w \mathrel{R} y$ and $w \mathrel{R} z$. Since $y \mathrel{R} w$, and by the transitivity of $R$, we obtain $y \mathrel{R} z$, and, similarly, $z \mathrel{R} y$.

\aleft Suppose that $R$ satisfies \texttt{.3}, and let $x\mathrel{R}y$ and $x\mathrel{R}z$. By \texttt{.3}, we can \wlg assume that $y\mathrel{R}z$ otherwise we exchange $y$ and $z$. Since $R$ is reflexive, we also have $z\mathrel{R}z$, and we set $w \df z$. Observe that \texttt{B}$_2$ was not needed for this direction.
\end{proof}

\section{Ideal algebras}

Our starting point are modal algebras $\klam{B,f}$ for which there is an ideal $I$ of $B$ such that
\begin{gather*}
f(x) =
\begin{cases}
x, &\text{if } x \in I, \\
1, &\text{otherwise.}
\end{cases}
\end{gather*}
Algebras of this form are called \emph{ideal algebras}; the class of all ideal algebras is denoted by \imoa. Ideal algebras generalize the discriminator algebra $\klam{B, f^\one}$: If $I = \set{0}$, then $f = f^\one$. In~\cite{dd21} we have investigated the class \imoa, and its main properties are as follows:
\begin{theorem}\label{thm:idalg} 
\begin{enumerate}
\item \imoa\ is a locally finite positive universal class, axiomatized by
\begin{gather}
(\forall x)[f(x) = x \tor f(x) = 1].
\end{gather}
\item $\klam{B,f}$ is an ideal algebra \tiff the set $B^{\text{c}}$ of closed elements has the form $I \cup \set{1}$, where $I$ is an ideal of $B$.
\item An ideal algebra is a closure algebra of depth at most two.
\item If $\klam{B,f} \in \Eq(\imoa)$ is subdirectly irreducible, then it is an ideal algebra. An ideal algebra $\klam{B,f}$ is subdirectly irreducible \tiff its defining ideal $I$ has the form $I = \set{0}$, or it is generated by an atom of $B$.
\end{enumerate}
\end{theorem}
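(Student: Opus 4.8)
The plan is to treat the four parts in order; each relies only on the definition of an ideal algebra together with the facts on closure algebras and J\'onsson's Lemma recalled above. For \textbf{(1)}, the displayed sentence is plainly positive and universal, so the substance is that it axiomatises \imoa\ and that \imoa\ is locally finite. Every ideal algebra satisfies the sentence by the definition of $f$. For the converse, given a modal algebra $\klam{B,f}$ satisfying it, I would argue: if $f$ is the identity, the improper ideal $I = B$ witnesses that $\klam{B,f}$ is an ideal algebra; otherwise set $I \df \set{x: f(x) = x}\setminus\set{1}$ and check that $I$ is an ideal. Here $0 \in I$ since $B$ is nontrivial, downward closure follows from monotonicity of $f$ (from $b \leq a$ and $f(a) = a \neq 1$ the sentence forces $f(b) = b$), and closure under $+$ uses the one nonobvious observation that if $a, b \in I$ with $a + b = 1$ then $f$ must be the identity --- expanding an arbitrary $c$ as $ca + cb$ and using additivity with $f(ca) \leq a \neq 1$, $f(cb) \leq b \neq 1$ gives $f(c) = c$ --- contradicting the case we are in. Once $I$ is an ideal, $f$ has exactly the ideal-algebra shape relative to $I$. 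Local finiteness is then immediate: the Boolean subalgebra $B_0$ generated by finitely many elements is finite and contains $1$, and $f(x) \in \set{x,1}\subseteq B_0$ for $x \in B_0$, so $B_0$ is already closed under $f$.

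For \textbf{(2)} and \textbf{(3)} I would first check, once and for all, that an ideal algebra really is a closure algebra: $a \leq f(a)$, $f(f(a)) = f(a)$, normality and additivity all fall out of the obvious case split on membership in $I$, using $f(1) = 1$. For (2) one direction is the definition; conversely, if $\Bc = I \cup \set{1}$ with $I$ an ideal, then for $x \notin I$ the element $f(x)$ is closed and $\geq x$, hence lies in $I \cup \set{1}$, and it cannot lie in $I$ (that would force $x \in I$), so $f(x) = 1$. For (3) it remains to verify $\mathrm{B}_{\ref{b2}}$ for an ideal algebra, a short computation: one records that $f^\partial(x) = x$ when $-x \in I$ and $f^\partial(x) = 0$ otherwise, and hence $f(f^\partial(y)) = 1$ when $-y \in I$ and $0$ otherwise (the case $I = B$, where $f$ is the identity, being trivial and dealt with separately). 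Then the argument of the outer $f$ in $\mathrm{B}_{\ref{b2}}$ is $0$ unless $-x, -y \in I$, in which case it equals $x\cdot -y \leq -y \in I$, so the outer $f$ fixes it and the left-hand side is $x\cdot -y \leq x$. Thus an ideal algebra is a closure algebra of depth at most two.

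For \textbf{(4)}, the first assertion is immediate from (1): since \imoa\ is a positive universal class, Lemma~\ref{lem:jon} and the remark following it yield $\Si\Eq(\imoa)\subseteq\imoa$. For the characterisation of subdirectly irreducible ideal algebras I would invoke Lemma~\ref{lem:si}(1): an ideal algebra $\klam{B,f}$ with ideal $I$ is subdirectly irreducible \tiff it has a smallest nontrivial closed ideal, i.e. a smallest closed ideal $\neq \set{0}$. The structural observation is that the closed ideals of $\klam{B,f}$ are exactly the ideals of $B$ contained in $I$ together with $B$ itself --- an ideal containing some $a \notin I$ contains $f(a) = 1$, so equals $B$. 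Hence subdirect irreducibility is equivalent to the existence of a least nonzero member of $\set{J: J \text{ an ideal of } B, \, J \subseteq I}\cup\set{B}$. If $I = \set{0}$ the only candidates are $\set{0}$ and $B$, and $B$ is the least nonzero one, so the algebra is subdirectly irreducible (indeed simple, being of the form $\klam{B,f^\one}$). If $I \neq \set{0}$, then $I$ itself is a nonzero closed ideal, so any least one must sit inside $I$ and inside every nonzero ideal below $I$, and a short ideal-theoretic argument shows this occurs precisely when $I = \da{a}$ for an atom $a$ of $B$: if $I$ contained an atom $a$ alongside some other nonzero element one exhibits a nonzero ideal below $I$ avoiding $a$, while if $I$ had no atoms one can shrink any nonzero ideal below $I$ indefinitely. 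This last bookkeeping --- identifying exactly which $I$ admit a least nonzero sub-ideal --- is the only step calling for real care; the remainder is routine verification.
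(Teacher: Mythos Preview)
The paper does not actually prove Theorem~\ref{thm:idalg}: it is quoted from the authors' earlier paper~\cite{dd21}, with no argument given here. So there is no in-paper proof to compare against. That said, your proposal is correct and essentially self-contained; it is also in the same spirit as the arguments the present paper gives for the analogous results on filter and MaxId algebras (Theorems~\ref{thm:fmopu}, \ref{thm:flf}, \ref{thm:fmasi}, \ref{thm:uu}, \ref{thm:si-mmoa}), so it is very likely close to what~\cite{dd21} does.

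Two places deserve a line more than you gave them. In part~(1), your closure-under-$+$ argument is fine, but you should also record explicitly that $f(a+b)=f(a)+f(b)=a+b$, so that once $a+b\neq 1$ you actually get $a+b\in I$; as written you only argue $a+b\neq 1$. In part~(4), your case split at the end is right but the first branch needs to be stated as yielding \emph{two} nonzero closed ideals with trivial intersection (namely $\da{a}$ and $\da{(b\cdot -a)}$ for any $b\in I$ with $b\not\leq a$), not merely one ideal avoiding $a$; and the ``shrink indefinitely'' branch should be phrased as: if $J$ were smallest nonzero and $0\neq x\in J$, pick $0\lneq y\lneq x$ (possible since $I$ is atomless) and observe $J\subseteq\da{y}$ forces $x\leq y$, a contradiction. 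With those two sentences added, the argument is complete.
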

Our initial concern was algebraic, and it was something of a surprise to us that ideal algebras constitute the algebraic semantics of Soboci{\'n}ski's logic \textbf{S4.4}~\cite{sob64}, also known as $\mathbf{S4.3DumB_2}$.
\begin{theorem}
\begin{enumerate}
\item $\Eq(\imoa)$ is generated by the complex algebras of finite frames $\klam{W,R}$ where $R$ is a quasiorder with two levels, such that the lower level consists of simple clusters, and the upper level consists of one cluster, see Figure \ref{fig:cs_idalg}.
\item $\Eq(\imoa) = \Eq(\mathbf{S4.3DumB_2})$.
\end{enumerate}
\end{theorem}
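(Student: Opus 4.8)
The plan is to prove the two claims separately, though they are closely linked: part (1) is essentially a representation result for $\Eq(\imoa)$ via complex algebras, and part (2) identifies the associated logic.

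For part (1), I would use J{\'o}nsson's Lemma together with Theorem~\ref{thm:idalg}. Since \imoa\ is a positive universal class (Theorem~\ref{thm:idalg}(1)), the remark following Lemma~\ref{lem:jon} gives $\Si\Eq(\imoa) \subseteq \imoa$, and in fact $\Si\Eq(\imoa)$ consists exactly of the subdirectly irreducible ideal algebras, which by Theorem~\ref{thm:idalg}(4) are the simple monadic algebras (case $I=\set{0}$) together with those ideal algebras whose defining ideal is generated by an atom. Because $\Eq(\imoa)$ is locally finite (Theorem~\ref{thm:idalg}(1)), it is generated by its finite members, hence by its finite subdirectly irreducible members, i.e. by the finite algebras just described. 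The task is then to show that each such finite \si\ ideal algebra is (isomorphic to) the complex algebra of a finite quasiorder $\F = \klam{W,R}$ of the stated shape — lower level a set of simple clusters, upper level a single cluster. By Theorem~\ref{thm:framealg}, a finite closure algebra is $\Cm(\F)$ for the frame $\F$ on its atoms with $a \mathrel{R_f} b \Iff a \leq f(b)$, as in~\eqref{RfAt}. So I would compute $R_f$ for a finite ideal algebra: if $I$ is the defining ideal and $a,b$ are atoms, then $a \leq f(b)$ holds iff either $b \notin I$ (so $f(b)=1$) or $b \in I$ and $a=b$ (so $f(b)=b$). Thus atoms below $1$ but outside $I$ form the top cluster (universally related, above everything), and atoms inside $I$ are simple clusters on the bottom level; this is exactly the picture of Figure~\ref{fig:cs_idalg}. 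Conversely every such frame's complex algebra is checked to be an ideal algebra with $I = \da{(\sum(\text{bottom atoms}))}$. Combined with Lemma~\ref{lem:si}(1) to handle which of these are \si, this yields (1).

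For part (2), the strategy is to match the frame class from (1) with the frame class of $\mathbf{S4.3DumB_2}$ and invoke Kripke completeness. The logic $\mathbf{S4.3DumB_2} = \mathbf{S4.4}$ is Kripke complete, and its frames are, by Table~\ref{tab:logcond}, the reflexive transitive relations that are additionally convergent-like (\texttt{.3}), satisfy \texttt{Dum} (quasiorder in which all but the final cluster is simple), and satisfy \texttt{B}$_2$ (depth two: $x\mathrel{R}y \tand y\mathrel{R}z \Implies y\mathrel{R}x \tor z\mathrel{R}y$). One checks that on finite frames these conditions together force exactly the shape in (1): \texttt{B}$_2$ plus \texttt{Dum} give depth at most two with a single final cluster and simple clusters everywhere else, and \texttt{.3} (or \texttt{.2}, by Theorem~\ref{lem:423}) forces the non-final clusters to all sit below that single final cluster — i.e. comparability upward — giving precisely the two-level frames of Figure~\ref{fig:cs_idalg}. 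Since $\Eq(\imoa)$ is generated by the complex algebras of these frames (part (1)) and $\mathbf{S4.4}$ is determined by exactly these frames, the variety $\Eq(\mathbf{S4.4})$ and $\Eq(\imoa)$ are generated by the same class of algebras, hence coincide; equivalently $\Log_{\imoa} = \mathbf{S4.4}$.

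The main obstacle I anticipate is the frame-shape bookkeeping in both directions: verifying that the conjunction of \texttt{Dum}, \texttt{B}$_2$ and \texttt{.3} on a \emph{finite} quasiorder is equivalent to "two levels, bottom level all simple clusters, top level a single cluster," and — slightly more delicate — handling the degenerate and infinite cases so that Kripke completeness of $\mathbf{S4.4}$ can legitimately be reduced to the finite frames. One must be careful that $\mathbf{S4.4}$ is determined by its \emph{finite} frames (which follows since, by part (1) and local finiteness, the corresponding variety is generated by finite algebras, hence the logic has the finite model property), so that comparing finite frame classes suffices. The rest — the computation of $R_f$ for ideal algebras and the identification of \si\ members via Lemma~\ref{lem:si}(1) and Theorem~\ref{thm:idalg}(4) — is routine given the results already quoted.
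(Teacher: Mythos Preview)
The paper does not prove this theorem here; Section~4 is a recap of results from~\cite{dd21}, and both Theorem~\ref{thm:idalg} and the present statement are simply quoted from that earlier work. So there is no proof in this paper to compare against. That said, your proposal is sound and follows precisely the template the paper later uses for the analogous results on filter algebras (Section~\ref{sec:fmoa}), MaxId algebras and ii--algebras (Section~\ref{sec:uuii}): reduce to finite subdirectly irreducibles via local finiteness and positive universality, compute the canonical frame via~\eqref{RfAt}, match the resulting frame shape with the first--order conditions in Table~\ref{tab:logcond}, and appeal to Kripke completeness. Your computation of $R_f$ for a finite ideal algebra is correct and yields exactly the iu--frame of Figure~\ref{fig:cs_idalg}.

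One point to tighten: your justification of the finite model property for $\mathbf{S4.4}$ (``which follows since, by part~(1) and local finiteness, the corresponding variety is generated by finite algebras'') is circular as written, because local finiteness of $\Eq(\imoa)$ gives FMP for $L_{\Eq(\imoa)}$, not for $\mathbf{S4.4}$, until the identity is established. The cleanest fix is to bypass FMP: given Kripke completeness of $\mathbf{S4.4}$, pass to a point--generated (hence rooted) subframe at a refuting world; a rooted frame satisfying \texttt{.3}, \texttt{Dum} and \texttt{B}$_2$ is either a single cluster or a single simple point below one cluster, and in either case its complex algebra is an ideal algebra (with $I=\set{0}$ or $I$ generated by an atom, exactly as in Theorem~\ref{thm:idalg}(4)). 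This yields $\Eq(\mathbf{S4.4}) \subseteq \Eq(\imoa)$ directly. The reverse inclusion is the easy check that ideal algebras validate \texttt{R1} (equivalently, that iu--frames validate $\mathbf{S4.4}$), which you already indicate.
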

The canonical relation of an ideal algebra depicted in Figure \ref{fig:cs_idalg} is called an \emph{iu--relation}, and its corresponding operator an \emph{iu--operator}, denoted by $f^{\text{iu}}$. This notation indicates that the restriction of the canonical relation $R_f$ to the lower level is the \underline{\bf i}dentity, and the restriction to the upper level is the \underline{\bf u}niversal relation. This observation motivated us to consider the corresponding situations of {iu,uu,iu} relations and operators.
\begin{figure}[htb]
\caption{The canonical relation of a finite ideal algebra of depth two}\label{fig:cs_idalg}
\vspace{5mm}
\centering
\setlength{\unitlength}{1776sp}%
\begingroup\makeatletter\ifx\SetFigFont\undefined%
\gdef\SetFigFont#1#2#3#4#5{%
  \reset@font\fontsize{#1}{#2pt}%
  \fontfamily{#3}\fontseries{#4}\fontshape{#5}%
  \selectfont}%
\fi\endgroup%
\begin{picture}(5732,5331)(1859,-5409)
\thicklines
{\color[rgb]{0,0,0}\put(2363,-4830){\vector( 3, 4){1417.200}}
}%
{\color[rgb]{0,0,0}\put(4725,-4830){\vector( 0, 1){1889}}
}%
{\color[rgb]{0,0,0}\put(7088,-4830){\vector(-3, 4){1417.200}}
}%
\put(4725,-1996){\makebox(0,0)[b]{\smash{{\SetFigFont{8}{9.6}{\rmdefault}{\mddefault}{\updefault}{\color[rgb]{0,0,0}$\Ult(B) \setminus \{F_{0}, \ldots, F_{k}\}$}%
}}}}
\put(2363,-5303){\makebox(0,0)[b]{\smash{{\SetFigFont{8}{9.6}{\rmdefault}{\mddefault}{\updefault}{\color[rgb]{0,0,0}$F_{0}$}%
}}}}
\put(4725,-5303){\makebox(0,0)[b]{\smash{{\SetFigFont{8}{9.6}{\rmdefault}{\mddefault}{\updefault}{\color[rgb]{0,0,0}\ldots \ldots \ldots}%
}}}}
\put(7088,-5303){\makebox(0,0)[b]{\smash{{\SetFigFont{8}{9.6}{\rmdefault}{\mddefault}{\updefault}{\color[rgb]{0,0,0}$F_{k}$}%
}}}}
{\color[rgb]{0,0,0}\put(4725,-1996){\oval(5672,3780)}
}%
\end{picture}%
\end{figure}
%

\section{Filter algebras}\label{sec:fmoa}

Ideal algebras are characterized by the property that the set $B^{\text{c}}$ of closed elements has the form $B^{\text{c}} = I \cup \set{1}$ for some ideal of $B$. Similarly, we call a closure algebra $\klam{B,f}$ a \emph{filter algebra}, if $B \in \set{\1,\2}$ or $B^{\text{c}}$ has the form $\set{0} \cup F$ for some filter $F$ of $B$ which is not equal to $\set{1}$. In the sequel we shall use $F$ for the determining filter. $\klam{B,f}$ is called \emph{proper}, if $F \neq B$.  We denote the class of all filter algebras by \fmoa, and the class of proper filter algebras by \fmoap.  Note that apart from the trivial algebra, $\2$ is the only discriminator algebra in \fmoa.

Even though for any proper ideal $I$, $I \cup \set{1}$ can be the set of closed elements, this is not the case for filter algebras:
\begin{theorem}
Let $\klam{B,f}$ be a filter algebra with respect to $F$.  Then, $F$ is principal.
\end{theorem}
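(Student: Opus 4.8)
The plan is to use the characterization of closed-element sets from Theorem~\ref{thm:bdcl}. Suppose $\klam{B,f}$ is a filter algebra with respect to a filter $F$, so $B^{\text{c}} = \set{0} \cup F$, and assume $B \notin \set{\1,\2}$ (the excluded cases being trivial). By Theorem~\ref{thm:bdcl}, $D \df \set{0}\cup F$ is a bounded sublattice of $B$ such that $\ua{b}\cap D$ has a smallest element for every $b\in B$; conversely $f(b) = \min(\ua{b}\cap D)$. The key observation is to apply this with $b = 0$: then $\ua{0}\cap D = D$ itself must have a smallest element. But $\min D$ cannot be $0$ unless $0\in F$, i.e.\ unless $F = B$; and if $F\ne B$ then $\min D$ must be the least element of $F$. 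So in every case $F$ has a least element, which means $F$ is principal.

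More carefully, I would split according to whether $F = B$ or $F\ne B$. If $F = B$, then $F = \ua{0}$ is principal and we are done. If $F\ne B$, then $0\notin F$, so $\min D$ (which exists by Theorem~\ref{thm:bdcl} applied to $b=0$) is an element of $F$ lying below every element of $F$; hence $F = \ua{(\min D)}$ is principal. I should also double-check the sublattice hypothesis is genuinely available: the statement of the theorem presupposes that $\klam{B,f}$ really is a filter algebra, so $B^{\text{c}}$ has the stated form and is automatically closed under finite meets and joins (a filter with $0$ adjoined is a sublattice), and it is bounded since $0,1\in B^{\text{c}}$; thus Theorem~\ref{thm:bdcl} applies.

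I do not anticipate a serious obstacle here — the argument is essentially a one-line application of Theorem~\ref{thm:bdcl} at the point $b=0$, exploiting the asymmetry between ideals and filters (for an ideal $I$, the set $I\cup\set{1}$ has a \emph{largest} element automatically, namely $1$, so no constraint is forced; for a filter $F$, the set $\set{0}\cup F$ having a \emph{smallest} element forces $F$ to have a least element). The only point requiring a word of care is the boundary case $F=B$, where $\min(\set{0}\cup F)=0$; this is harmless since then $F=\ua{0}$ is still principal. An alternative, more self-contained route avoiding Theorem~\ref{thm:bdcl} would be to argue directly: $f(0)$ must be a closed element (by $\mathrm{Cl}_2$) with $0\le f(0)$, and minimality of $f(0)$ among closed elements above $0$ follows from $f$ being a closure operator together with $B^{\text{c}}$ being closed under meets; then $f(0)$ generates $F$. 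Either way the proof is short.
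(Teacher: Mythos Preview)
Your main argument has a genuine gap. You propose to apply Theorem~\ref{thm:bdcl} at $b=0$ and conclude that $D = \set{0}\cup F$ has a smallest element, which you then claim forces $F$ to have a least element. But $0$ is \emph{always} in $D$ by construction, so $\min D = 0$ regardless of whether $F$ is principal; the condition $\min(\ua{0}\cap D)$ exists is vacuous and tells you nothing about $F$. Your claim that ``$\min D$ cannot be $0$ unless $0\in F$'' confuses $D$ with $F$. The same error recurs in your alternative route: $f$ is a modal operator, hence normal, so $f(0)=0$; again no information about $F$ is extracted.

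The correct idea is to apply Theorem~\ref{thm:bdcl} at a \emph{nonzero} element outside $F$, so that $0$ is excluded from $\ua{b}\cap D$ and the minimum is forced into $F$. Concretely (as the paper does): since $F\neq\set{1}$, pick $x\in F\setminus\set{1}$; then $-x\neq 0$ and $-x\notin F$, so $\ua{(-x)}\cap D = \ua{(-x)}\cap F$, and its minimum $f(-x)$ lies in $F$. One then checks that $x\cdot f(-x)$ is the least element of $F$: if some $y\in F$ satisfied $y\lneq x\cdot f(-x)$, then $-x+y\in F$, $-x\leq -x+y\leq f(-x)$, and $-x+y\neq f(-x)$ (else $y = x\cdot(-x+y) = x\cdot f(-x)$), contradicting the minimality of $f(-x)$. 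The asymmetry with the ideal case that you correctly sensed is real, but it manifests only once you probe $D$ at a point where the adjoined $0$ is not in play.
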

\begin{proof}
If $F = B$, then $F = \ua{0}$. Suppose that $F \neq B$; then, $\set{0} \cup F$ is a proper $0,1$-sublattice of $B$. We already know from Theorem~\ref{thm:bdcl} that a $0,1$-sublattice $D$ of $B$ is the set of closed elements \tiff $\ua{b} \cap D$ has a smallest element for all $b \in B$; in this case, $f(b) = \min\; (\ua{b} \cap D)$. Choose some $x \in F \setminus \set{1}$;  then, $-x \neq 0$ and $f(-x) \in F$, since $f(-x)$ is closed. It follows that $x \cdot f(-x) \in F$.

Assume that $\ua{x} \cdot f(-x) \subsetneq F$. Then, there is some $y \in F$ such that $y \lneq x \cdot f(-x)$. If $-x + y = f(-x)$, then $y = x \cdot (-x + y) =  x \cdot f(-x)$, a contradiction. Hence, $-x + y \lneq f(-x)$, and now $-x + y \in F$ shows that $f(-x)$ is not the smallest element of $F$ above $-x$, a contradiction.
\end{proof}
\begin{theorem}\label{thm:fa}
If $\klam{B,f}$ is a filter algebra with determining filter $\ua{a}$, then for all $x \in B$,
\begin{gather}\label{fmoa:princ}
f(x) =
\begin{cases}
0, &\text{if } x = 0,\\
a+x, &\text{otherwise}.
\end{cases}
\end{gather}
\end{theorem}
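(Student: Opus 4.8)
The plan is to use Theorem~\ref{thm:bdcl} together with the fact, just established, that the determining filter $F$ of a filter algebra is principal, say $F = \ua{a}$ for some $a \in B$. By definition of a filter algebra, the set $\Bc$ of closed elements is $\set{0} \cup \ua{a}$, and Theorem~\ref{thm:bdcl} tells us that for every $b \in B$ we have $f(b) = \min\;(\ua{b} \cap \Bc)$. So the entire proof reduces to computing this minimum explicitly in the two cases.

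First, if $b = 0$, then $\ua{0} = B$, so $\ua{0} \cap \Bc = \Bc = \set{0} \cup \ua{a}$, whose smallest element is $0$; hence $f(0) = 0$, as required (this is of course also forced by normality). Second, suppose $b \neq 0$. I claim that $\ua{b} \cap \Bc = \ua{b} \cap (\set{0} \cup \ua{a}) = \ua{(a+b)}$. Indeed, since $b \neq 0$, the element $0$ does not lie in $\ua{b}$, so $\ua{b} \cap \Bc = \ua{b} \cap \ua{a} = \set{x : x \geq b \tand x \geq a} = \set{x : x \geq a+b} = \ua{(a+b)}$. The smallest element of $\ua{(a+b)}$ is $a+b$, and $a + b$ is indeed closed since $a+b \geq a$ puts it in $\ua{a} \subseteq \Bc$. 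Therefore $f(b) = \min\;(\ua{b} \cap \Bc) = a+b$, which is exactly \eqref{fmoa:princ}.

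There is essentially no obstacle here: the only subtlety is making sure that the case split $b = 0$ versus $b \neq 0$ is genuinely needed, i.e.\ that one cannot simply write $f(b) = a + b$ uniformly. This fails precisely because $a + 0 = a$ need not equal $0$, so normality forces the separate clause at $0$; this is consistent with $\set{0}$ lying in $\Bc$ as an isolated element below the filter $\ua{a}$. One should also note for completeness that when $a = 0$ the filter $\ua{a} = B$ is improper and formula \eqref{fmoa:princ} still returns $f(b) = b$ for $b \neq 0$, so that $f$ is the identity — consistent with the degenerate case $\Bc = B$. I would present the argument in the two short paragraphs above, invoking Theorem~\ref{thm:bdcl} and the preceding theorem on principality, with the computation $\ua{b}\cap(\set{0}\cup\ua{a}) = \ua{(a+b)}$ for $b\neq 0$ as the single line doing the real work.
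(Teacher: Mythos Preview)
Your proof is correct and follows essentially the same approach as the paper: both invoke Theorem~\ref{thm:bdcl} (implicitly in the paper's case) to identify $f(x)$ as the least element of $\set{0}\cup\ua{a}$ above $x$, and then observe that for $x\neq 0$ this is $a+x$. Your version is simply more explicit, spelling out the computation $\ua{b}\cap(\set{0}\cup\ua{a})=\ua{(a+b)}$ and the boundary cases, whereas the paper compresses this into a single sentence.
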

\begin{proof}
Let $x \neq 0$. Then, $x \leq f(x)$, and $a+x$ is the smallest element of $\set{0} \cup \ua{a}$ above $x$.
\end{proof}

\begin{theorem}\label{thm:fmasi}
A filter algebra $\klam{B,f}$ is subdirectly irreducible \tiff $\card{B} = 2$ or it is proper.
\end{theorem}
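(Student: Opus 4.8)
The plan is to use Lemma~\ref{lem:si}(1): a closure algebra is subdirectly irreducible if and only if it has a smallest nontrivial closed ideal. Recall that the closed ideals of a closure algebra $\klam{B,f}$ are exactly those ideals $J$ of $B$ with $f[J] \subseteq J$, and these correspond to the congruences. So the task reduces to analysing the closed ideals of a filter algebra. By Theorem~\ref{thm:fa} we have an explicit formula for $f$ in terms of the generator $a$ of the determining filter $\ua{a}$, so the closed ideals can be computed directly.

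First I would treat the trivial and two-element cases: $\1$ is (by convention or vacuously) handled separately, and $\2$ is clearly subdirectly irreducible since its only nontrivial closed ideal is $\set{0,1}$ itself, which is the smallest one. Next, for the main direction I would show that a proper filter algebra (so $F = \ua{a}$ with $a \neq 0$ and also $F \neq \set{1}$, i.e. $a \neq 1$) is subdirectly irreducible. The natural candidate for the smallest nontrivial closed ideal is $\da{-a}$, the principal ideal generated by $-a$ (equivalently $\da{a}^{\,\partial}$-complement style; note $-a \neq 0$ since $a \neq 1$). I would check: (i) $\da{-a}$ is closed, i.e. $f(x) \in \da{-a}$ whenever $x \leq -a$ — using \eqref{fmoa:princ}, for $x \neq 0$ with $x \leq -a$ we get $f(x) = a + x$, which need \emph{not} lie below $-a$; so in fact the closed ideal structure is more delicate and I should instead look at which ideals are closed. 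Let me reconsider: an ideal $J$ is closed iff for every nonzero $x \in J$ we have $a + x \in J$; since $J$ is an ideal and $a + x \geq a$, this forces $a \in J$ once $J$ contains any nonzero element. Hence the nontrivial closed ideals are precisely the ideals $J$ with $a \in J$, i.e. the ideals containing $\da{a}$. The smallest such is $\da{a}$ itself, which is nontrivial exactly when $a \neq 0$, i.e. when the algebra is proper. This gives subdirect irreducibility.

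For the converse direction I would show that a non-proper filter algebra with $\card{B} > 2$ is \emph{not} subdirectly irreducible. Non-proper means $F = B$, so $a = 0$ and $f(x) = x$ for all $x$ (the identity closure operator); equivalently every element is closed. Then every ideal of $B$ is closed, and the smallest nontrivial closed ideal would have to be the smallest nontrivial ideal of $B$, which exists only if $B$ has a least atom below everything — impossible once $\card{B} > 2$ since a Boolean algebra with more than two elements has at least two distinct atoms (or is atomless), and in either case no smallest nonzero ideal. Hence by Lemma~\ref{lem:si}(1) such an algebra fails to be subdirectly irreducible. Combining the two directions with the $\card{B}=2$ case yields the claim.

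The main obstacle, as the scratch work above shows, is getting the characterisation of closed ideals of a filter algebra exactly right: the tempting guess (some principal ideal determined by $a$) needs the observation that closedness of an ideal $J$, via $f(x) = a+x$ for $x \neq 0$, forces $a \in J$, so that $\da{a}$ is the minimum nontrivial closed ideal in the proper case. Once that is pinned down, both directions are short applications of Lemma~\ref{lem:si}(1). I would also double-check the boundary between ``$F \neq \set{1}$'' (part of the definition of filter algebra) and ``proper'' ($F \neq B$): a filter algebra is automatically either $\2$ or has $a \neq 1$, so the dichotomy ``$\card{B}=2$ or proper'' is genuinely exhaustive for filter algebras, and the non-proper case with $\card{B}>2$ is exactly the identity closure operator on a larger Boolean algebra.
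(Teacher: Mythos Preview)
Your argument is correct and matches the paper's approach: both show that in the proper case $\da{a}$ is the smallest nontrivial closed ideal (because $f(x)=a+x$ for $x\neq 0$ forces any nontrivial closed ideal to contain $a$), while in the non-proper case $f$ is the identity and $\klam{B,f}$ is subdirectly irreducible only for $\card{B}=2$. One small fix: your dichotomy ``at least two atoms or atomless'' misses Boolean algebras with exactly one atom, but the conclusion survives since for any $0 \lneq x \lneq 1$ the ideals $\da{x}$ and $\da{-x}$ are nontrivial with trivial intersection, so no smallest nontrivial ideal exists.
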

\begin{proof}
\aright Suppose that $\klam{B,f}$ is subdirectly irreducible. If $\klam{B,f}$ is not proper, then $F = B$, and each element is closed.  Thus, $\card{B} = 2$.

\aleft If $\card{B} = 2$, then it is simple, hence, subdirectly irreducible. Otherwise, suppose that $\klam{B,f} \in \fma$ with respect to some $a \neq 0$. Then, $\da{a}$ is a nontrivial closed ideal, since $f(x) = a$ \tiff $0 \lneq x \leq a$. Since every nontrivial closed ideal contains $a$, $\da{a}$ is the smallest nontrivial closed ideal, hence, $B$ is subdirectly irreducible.
\end{proof}

\begin{theorem}\label{thm:fmopu}
$\fmoa$ is a positive universal class.
\end{theorem}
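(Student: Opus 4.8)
The plan is to show that \fmoa\ is closed under substructures and ultraproducts, and hence — being defined in a language with no relation symbols other than equality and with operations that make subalgebras automatic — is a positive universal class in the sense used earlier for \imoa\ (cf. Theorem~\ref{thm:idalg}(1)). Concretely, I would first pin down a first-order description of membership. By Theorems~\ref{thm:fa} and~\ref{thm:bdcl}, a closure algebra $\klam{B,f}$ lies in \fmoa\ exactly when either $\card{B} \leq 2$, or the set $\Bc$ of closed elements equals $\set{0} \cup F$ for a (necessarily principal, by the earlier theorem) filter $F \neq \set{1}$; equivalently, $f$ acts as in~\eqref{fmoa:princ} for the element $a \df f^\partial(1)$... more carefully, $a$ should be the bottom of $F$, which is $\prod F$ when it exists. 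The cleaner route is to observe that $\klam{B,f} \in \fmoa$ iff it is a closure algebra satisfying the single positive universal sentence
\begin{gather*}
(\forall x)(\forall y)\big[f(x) \cdot f(y) = f(x \cdot f(y)) \tor x = 0 \tor y = 0 \big] \ \wedge \ (\forall x)[f(x) \cdot x' = 0 \implies \ldots],
\end{gather*}
but rather than guess the exact axiom, I would characterize \fmoa\ by: $f(x) + f(y) = f(x+y)$ always (additivity, already have it), $x \leq f(x)$, $f f = f$, and the crucial ``flatness'' condition that any two nonzero closed elements have a nonzero closed element below them together with a closed element that is the meet — i.e. $\Bc \setminus \set{0}$ is a filter. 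The statement ``$\Bc^+$ is closed under meets and is upward closed'' is expressible by $(\forall x)(\forall y)[f(x) = x \ \& \ f(y) = y \ \& \ x \neq 0 \ \& \ y \neq 0 \implies f(x \cdot y) = x \cdot y]$ (a positive... no, it has a negated-equality hypothesis, so it is universal but not positive). Since the paper only claims \emph{positive universal} for \imoa\ and merely \emph{positive universal} here too, and the $\imoa$ axiom $(\forall x)[f(x) = x \tor f(x) = 1]$ genuinely is positive, I should aim for a genuinely positive universal axiom; the natural candidate is $(\forall x)(\forall y)[f(x\cdot f(y)) = f(x)\cdot f(y) \ \tor\ f(x) = f(0) \ \tor\ f(y) = f(0)]$, encoding that closed elements are closed under meet once we are away from $0$, using $f(0)=0$ to name $0$ positively. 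I would verify this axiom holds in \fmoa\ using the explicit formula~\eqref{fmoa:princ}, and conversely that a closure algebra satisfying it has $\Bc$ of the required shape by invoking Theorem~\ref{thm:bdcl}.

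Once the positive universal axiomatization is in hand, closure under \sub\ and \prodsu\ (indeed under \hom\sub\prodsu) is automatic from the preservation theorem for positive universal sentences (Łoś plus the fact that positive universal sentences are preserved by substructures and homomorphic images); the trivial algebra \1\ is in \fmoa\ by fiat, and \fmoa\ contains \2. So the real content is exhibiting the axiom, and the rest is a citation to the standard model-theoretic facts already used in the paper for Lemma~\ref{lem:jon} and Theorem~\ref{thm:idalg}.

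The step I expect to be the main obstacle is getting the axiom \emph{exactly right} — in particular making it genuinely \emph{positive} (no negated equalities) rather than merely universal, which is what would be needed to match the phrasing and to later conclude, via the remark after Lemma~\ref{lem:jon}, that $\Si\Eq(\fmoa) \subseteq \fmoa$. The subtlety is that ``$x \neq 0$'' is not positive, so I must replace the meet-closedness of $\Bc^+$ by a positive surrogate; the trick of writing the disjunction $f(x\cdot f(y)) = f(x)\cdot f(y) \tor f(x)=0 \tor f(y)=0$ works because when $x$ or $y$ is $0$ the corresponding $f$-value is $0$ and the disjunct is satisfied, while for nonzero closed $x,y$ one computes from~\eqref{fmoa:princ} that $f(x)\cdot f(y) = x\cdot y$ and $f(x\cdot f(y)) = f(x\cdot y) = a + x\cdot y = x \cdot y$ since $x \cdot y \geq a$ when both $x,y \geq a$. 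Verifying the converse direction — that any closure algebra satisfying this positive sentence (together with the closure-algebra equations) is a filter algebra — requires showing $\Bc = \set{0}\cup F$ with $F$ a filter, which I would do by first noting $\Bc$ is always a bounded sublattice closed under the operation $b \mapsto f(b)$, then using the sentence to see that the nonzero closed elements form a sublattice that is upward closed, hence either all of $\Bc$ collapses (giving $\card B \leq 2$) or $\Bc^+$ is a proper filter; an edge case to handle is when $1$ is the only nonzero closed element, which forces $\card B = 2$.
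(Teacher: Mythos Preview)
The paper's route is entirely different from yours: it never writes down an axiom. Instead it verifies directly that $\fmoa$ is closed under subalgebras and under homomorphic images. For subalgebras, the key trick is that if $\klam{A,g} \leq \klam{B,f}$ with $\card{A} \geq 4$, then for any $x \in A$ with $x \neq 0$ and $-x \neq 0$ one computes $g(x)\cdot g(-x) = (a+x)(a+ -x) = a$, so the generator $a$ of the determining filter already lies in $A$, and $\klam{A,g}$ is a filter algebra with respect to $\ua{a}\cap A$. For quotients, one notes that every nontrivial closed ideal contains $a$, whence the induced operator on the quotient is the identity. No explicit positive universal sentence is exhibited, and the difficulty you flag as the ``main obstacle'' is simply bypassed.

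Your concrete proposal has a genuine gap: the candidate axiom
\[
(\forall x)(\forall y)\bigl[\,f(x\cdot f(y)) = f(x)\cdot f(y) \;\tor\; f(x)=0 \;\tor\; f(y)=0\,\bigr]
\]
is \emph{not} satisfied by all filter algebras. On the four-element algebra with atoms $p,q$ and determining element $a=p$ (so $f(p)=p$ and $f(q)=1$), take $x=q$ and $y=p$: then $x\cdot f(y)=q\cdot p=0$, hence $f(x\cdot f(y))=0$, while $f(x)\cdot f(y)=1\cdot p=p$; and neither $f(q)$ nor $f(p)$ vanishes. Your verification paragraph only treats the case where $x$ and $y$ are themselves nonzero \emph{closed} elements (so $x,y\geq a$ and $x\cdot y\geq a$), but the quantifiers in the axiom range over all of $B$, and the identity must survive when $x$ is disjoint from $f(y)$ while both are nonzero. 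A correct axiom would also have to exclude the unary discriminator on algebras of size $\geq 4$, which the definition of $\fmoa$ rules out via the clause $F\neq\set{1}$ --- a constraint that does not have an obvious positive rendering. The paper's semantic argument avoids both of these issues at once.
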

\begin{proof}
Suppose that $\klam{B,f} \in \fma$ with respect to $a$. If $a = 0$, i.e. $f = 1'$, then, clearly, each subalgebra and homomorphic image of $\klam{B,f}$ is in $\fma$. Thus, suppose that $a \neq 0$, and let $\klam{A, g}$ be a subalgebra of $\klam{B,f}$, $\card{A} \gneq \2$. Let $F' \df \ua{a} \cap A$; then, $F'$ is a filter of $A$, and $A^{\text{c}} = \Bc \cap A = \set{0} \cup F'$. Since $\card{A} \geq 4$, there is some $x \in A$ such that $x \neq 0$ and $-x \neq 0$. Now, $g(x) \cdot g(-x) = f(x) \cdot f(-x) = (a+x) \cdot (a + -x) = a \in A$, and thus, $a \in F'$ and $a \neq 1$, since $\card{B} \geq \card{A} \gneq 2$.

Next, suppose that $I$ is a nontrivial congruence ideal of $\klam{B,f}$; then, $a \in I$.  Let $A \df B/I$ with $\pi: B \onto A$ the natural homomorphism;  \wlg we suppose that $A \neq \2$. Let $g: A \to A$ be the induced mapping defined by $g(\pi(x)) = \pi(f(x))$. This is well defined, since $I$ is a congruence ideal. We will show that $g$ is the identity, which implies that $\klam{A,g} \in \fma$: If $x = 0$, then $g(\pi(0)) = \pi(f(0)) = \pi(0)$. Otherwise, for any $y \in B$ we have
\begin{xalignat*}{2}
y \in g(\pi(x)) &\Iff y \in \pi(f(x)), &&\text{since $\pi$ is a homomorphism,} \\
&\Iff y \in \pi(a+x), &&\text{since $x \neq 0$,}\\
&\Iff (\exists z \in I)[y+z = x+a+z], \\
&\Iff (\exists z' \in I )[y + z' = x + z'], &&\text{set $z' \df a + z$, which is in $I$}, \\
&\Iff y \in \pi(x).
\end{xalignat*}
This completes the proof.
\end{proof}
\begin{corollary}\label{cor:sif}
\begin{enumerate}
\item Every subdirectly irreducible algebra in $\Eq(\fmoa)$ is a filter algebra.
\item $\Eq(\fmoa)$ is generated by $\fmoap$.
\end{enumerate}
\end{corollary}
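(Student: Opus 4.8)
\textbf{Proof plan for Corollary~\ref{cor:sif}.}

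The plan is to obtain both parts as essentially immediate consequences of the preceding results, particularly Theorem~\ref{thm:fmopu} and Lemma~\ref{lem:jon}. For part~(1), I would argue as follows. Since $\fmoa$ is a positive universal class by Theorem~\ref{thm:fmopu}, it is in particular axiomatizable by first-order positive universal sentences, so the remark following Lemma~\ref{lem:jon} applies and gives $\Si\Eq(\fmoa) \subseteq \fmoa$. In other words, every subdirectly irreducible algebra in the variety generated by the filter algebras is already a filter algebra. (One should double-check that closure algebras, having equationally definable congruences and hence the congruence extension property, pose no subtlety here; the cited instance of J\'onsson's Lemma is stated for arbitrary classes of modal algebras, so it applies directly.)

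For part~(2), the plan is to show $\Eq(\fmoa) = \Eq(\fmoap)$. The inclusion $\Eq(\fmoap) \subseteq \Eq(\fmoa)$ is trivial since $\fmoap \subseteq \fmoa$. For the reverse inclusion, by part~(1) it suffices to show that every subdirectly irreducible filter algebra lies in $\Eq(\fmoap)$, because a variety is generated by its subdirectly irreducible members. By Theorem~\ref{thm:fmasi}, a subdirectly irreducible filter algebra is either proper --- in which case it is in $\fmoap$ and we are done --- or it is the two-element algebra $\2$. So the one remaining point is to check that $\2 \in \Eq(\fmoap)$. This is where a small amount of genuine (but routine) work is needed: one exhibits a proper filter algebra that has $\2$ as a homomorphic image (or as a subalgebra). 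For instance, take the four-element Boolean algebra $B = \{0, a, -a, 1\}$ with determining filter $\ua{a}$; then $\klam{B,f}$ is a proper filter algebra by Theorem~\ref{thm:fa}, the ideal $\da{a}$ is a congruence ideal, and $B/\da{a} \cong \2$ with the induced operator being the identity on $\2$ --- indeed $\2$ with the identity operator is itself a (non-proper) filter algebra, and the quotient computation is exactly the kind carried out in the proof of Theorem~\ref{thm:fmopu}. Hence $\2 \in \hom(\fmoap) \subseteq \Eq(\fmoap)$.

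The main obstacle is essentially bookkeeping rather than mathematical depth: one must be careful that the only subdirectly irreducible filter algebra \emph{not} covered by $\fmoap$ is $\2$ (which Theorem~\ref{thm:fmasi} delivers cleanly), and then verify that $\2$ is recovered from $\fmoap$. Since the trivial algebra $\1$ is a quotient of $\2$ and hence of any member of $\fmoap$, and $\2$ arises as shown above, nothing is lost. Putting the pieces together: $\Eq(\fmoa) = \Eq(\Si\Eq(\fmoa)) \subseteq \Eq(\fmoap \cup \set{\2}) = \Eq(\fmoap)$, and the reverse inclusion is immediate, which establishes part~(2).
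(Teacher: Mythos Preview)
Your proposal is correct and follows essentially the same route as the paper: part~(1) via J\'onsson's Lemma together with the positive universal axiomatization of $\fmoa$ from Theorem~\ref{thm:fmopu}, and part~(2) by reducing to the subdirectly irreducibles via Theorem~\ref{thm:fmasi} and noting that $\2$ is a homomorphic image of the four-element proper filter algebra. The paper's proof is slightly terser but uses the identical ingredients in the same order.
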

\begin{proof}
1. Suppose that $\klam{B,f} \in \Eq(\fmoa)$ is subdirectly irreducible. By J{\'o}nsson's Lemma~\ref{lem:jon}, $\klam{B,f} \in \hom\sub\prodsu(\fmoa)$. Theorem~\ref{thm:fmopu} shows that \fmoa\ is closed under taking homomorphic images, subalgebras, and ultraproducts, hence, $\klam{B,f} \in \fmoa$.

2. $\Eq(\fmoa)$ is generated by its subdirectly irreducible algebras. From 1. above and Theorem \ref{thm:fmasi} we obtain that $\Eq(\fmoa)$ is generated by $\fmoap \cup \set{\2}$. Now observe that $\2$ is a homomorphic image of, for example, the four element proper filter algebra (which is unique up to isomorphism).
\end{proof}

Next we show that $\Eq(\fmoa)$ is locally finite; it will be more convenient to work with interior algebras as in~\cite{bd75}. Let $\klam{B,f}$ be a filter algebra with $f$ determined by $\ua{a}$; then the set $B^{\text{o}}$ of open elements is the set $\da{ -a} \cup \set{1}$: Clearly, $1$ is open. Let $x \neq 1$, i.e. $-x \neq 0$. Then,
\begin{gather*}
f^\partial(x) = x \Iff  -f(-x) = x \Iff f(-x) = -x \Iff a+ -x = -x \Iff a \leq -x \Iff x \leq -a.
\end{gather*}
Thus, $\klam{B,f}$ is a filter algebra \tiff $B^{\text{o}}$ has the form $\da b \cup \set{1}$ for some $b \in B$.

For each $n \in \omega^+$, we define $K_n$ to be (isomorphic to) the interior algebra generated by its atoms $a_1, \ldots, a_{n}$ with open elements $b_i: i \leq n$, where $b_0 \df 0$ and $b_i \df a_1 + \ldots + a_i$ for $0 \lneq i \leq n$~\cite[p. 32]{blok80a};  thus, $B^{\text{o}}$ is a chain of length $n+1$. The induced interior operator is obtained by $i(x) \df \max (\da{x} \cap \set{b_i: i \leq n})$.

For the class $\mathsf{I}$ of interior algebras we define $(\mathsf{I}:K_n) \df \set{A \in \mathsf{I}: K_n \not\in \mathbf{Sub}(A)}$. We will use the characterization of locally finite interior algebras of~\cite[4.3, p. 181]{Blok76}:
\begin{theorem}\label{thm:locfin}
Suppose that $\mathsf V$ is a variety of interior algebras. Then,  $\mathsf V$ is locally finite \tiff $\mathsf{V} \subseteq (\mathsf{I}:K_n)$ for some $n$.
\end{theorem}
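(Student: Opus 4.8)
\emph{Overview.} The statement is an equivalence, and I would prove the two implications separately, treating the forward direction as routine and the converse as the substantial part. I first record the harmless monotonicity remark that the classes $(\mathsf I:K_n)$ increase with $n$: since $K_m$ embeds into $K_n$ for $m\leq n$, any algebra omitting $K_n$ also omits $K_{n+1}$, so $(\mathsf I:K_n)\subseteq(\mathsf I:K_{n+1})$, and ``$\mathsf V\subseteq(\mathsf I:K_n)$ for some $n$'' may be read as ``for all sufficiently large $n$''.

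\emph{Local finiteness implies $\mathsf V\subseteq(\mathsf I:K_n)$ for some $n$.} I argue by contraposition. Suppose that for every $n$ there is $A\in\mathsf V$ with $K_n\in\mathbf{Sub}(A)$; since a variety is closed under subalgebras, $K_n\in\mathsf V$ for every $n$. The decisive observation is that each $K_n$ is generated by a single element. Indeed, taking $g$ to be the join of the even-indexed atoms, one recovers the atoms one at a time by alternately applying $i$, $f$ and complementation; the first step is already transparent, since $i(-g)=a_1$, and thereafter the closure operator fills upward so that successive atoms can be isolated. Hence each $K_n$ is a homomorphic image of the free $\mathsf V$-algebra $F_{\mathsf V}(1)$ on one generator, so $\card{F_{\mathsf V}(1)}\geq\card{K_n}=2^n$ for all $n$. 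Therefore $F_{\mathsf V}(1)$ is infinite and $\mathsf V$ is not locally finite.

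\emph{$\mathsf V\subseteq(\mathsf I:K_n)$ implies local finiteness.} Here I must bound $\card{F_{\mathsf V}(k)}$ by a function of $k$ and the fixed $n$. A generated subalgebra is the closure of its generators under the Boolean operations and under $i$; the Boolean closure of a finite set is finite, so the only source of infinitude is the unbounded interleaving of $i$ with Boolean combinations. Accordingly, the plan is to bound the length of any strictly alternating sequence $x_0>x_1>\cdots$ whose consecutive terms pass in turn through an application of $i$ and a Boolean combination of elements already obtained. The load-bearing lemma is that a strictly alternating sequence of length $n$ realised inside $A$ gives rise to a copy of $K_n$ in $\mathbf{Sub}(A)$: such an alternation is precisely the datum out of which the $(n+1)$-element chain of open elements, together with its atoms, is assembled. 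Since $\mathsf V\subseteq(\mathsf I:K_n)$, no such sequence of length $n$ exists, so every alternation terminates in fewer than $n$ steps, and a routine bookkeeping then bounds the number of inequivalent interior terms over $k$ generators, hence $\card{F_{\mathsf V}(k)}$.

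\emph{Main obstacle.} The crux is exactly the lemma that a long strictly alternating interior/Boolean sequence embeds $K_n$; equivalently, that omitting $K_n$ uniformly caps the interior-alternation depth. This is where the special role of the chain algebras $K_n$ is felt, because an open chain alone does \emph{not} force a $K_n$-subalgebra: one must reconstruct the full structure of $\Cm$ of the $n$-element chain, i.e. a Boolean algebra together with its chain of opens and the matching closed elements. An alternative, and perhaps cleaner, route is to translate $\mathsf V\subseteq(\mathsf I:K_n)$ into a bound on the depth of the members of $\mathsf V$ and then prove local finiteness by induction on the depth, splitting off the top (S5-like, hence locally finite) layer and applying a filtration argument to the remainder; the point requiring care is then the gluing of the two pieces under finite generation.
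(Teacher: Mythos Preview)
The paper does not give its own proof of this theorem: it is quoted verbatim from Blok's thesis \cite[4.3, p.~181]{Blok76} as a tool, immediately after the sentence ``We will use the characterization of locally finite interior algebras of~\cite[4.3, p. 181]{Blok76}'', and is then applied in Theorem~\ref{thm:flf} and the analogous local-finiteness results. There is thus nothing in the paper to compare your proposal against.

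On the merits of the proposal itself: the forward direction is correct and is the standard argument---each $K_n$ is indeed one-generated (your generator works once one alternates $i$ with adding the already-obtained initial segment $b_k$ back in before the next application), so if every $K_n$ lies in $\mathsf V$ then $\mathcal F_{\mathsf V}(1)$ has arbitrarily large finite quotients and is infinite. For the converse you have located the difficulty accurately but not discharged it: the passage from ``no copy of $K_n$'' to a uniform bound on term complexity requires more than an alternation count, since a long strictly decreasing chain of open elements does \emph{not} by itself yield a subalgebra isomorphic to $K_n$ (the Boolean closure of such a chain need not have the right interior operator). Your alternative suggestion---to read $\mathsf V\subseteq(\mathsf I:K_n)$ as a depth bound and induct on depth---is closer to how the result is actually established in Blok's work, but as written your proposal is an outline with the load-bearing lemma left open rather than a proof.
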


\begin{theorem}\label{thm:flf}
$\Eq(\fmoa)$ is locally finite.
\end{theorem}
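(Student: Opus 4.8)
The plan is to apply Theorem~\ref{thm:locfin} to the variety $\mathsf{V}$ of interior algebras definitionally equivalent to $\Eq(\fmoa)$. Since the closure- and interior-operations are interdefinable, a closure algebra is finitely generated iff its dual interior algebra is, so $\Eq(\fmoa)$ is locally finite iff $\mathsf{V}$ is, and Theorem~\ref{thm:locfin} reduces the latter to producing some $n$ with $\mathsf{V}\subseteq(\mathsf{I}:K_n)$. As $\mathsf{V}$ is closed under subalgebras, $\mathsf{V}\subseteq(\mathsf{I}:K_n)$ is equivalent to $K_n\notin\mathsf{V}$, i.e.\ to the closure algebra $K_n^\partial$ dual to $K_n$ not lying in $\Eq(\fmoa)$. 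So everything comes down to exhibiting one such $n$.

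I claim $n=3$ works, while $n\leq 2$ does not. Indeed, the closed elements of $K_2^\partial$ are $\set{0,a_2,1}$, and $\set{a_2,1}=\ua{a_2}$ is a proper principal filter, so $K_2^\partial$ is itself a (proper) filter algebra and hence lies in $\Eq(\fmoa)$. For $K_3$ the open elements are $b_0=0$, $b_1=a_1$, $b_2=a_1+a_2$, $b_3=1$, so the closed elements of $K_3^\partial$ are their complements, $\Bc=\set{0,a_3,a_2+a_3,1}$. This is not of the form $\set{0}\cup F$ for a filter $F$, since $\set{a_3,a_2+a_3,1}$ is not upward closed ($a_1+a_3\geq a_3$ but $a_1+a_3\notin\Bc$); thus $K_3^\partial$ is not a filter algebra, and it is neither $\1$ nor $\2$ because $\card{K_3^\partial}=8$. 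On the other hand $K_3^\partial$ is subdirectly irreducible: by Theorem~\ref{thm:bdcl}, $f(x)=\min(\ua{x}\cap\Bc)$, which gives $f(a_1)=1$, $f(a_2)=a_2+a_3$, $f(a_3)=a_3$; hence any nontrivial closed ideal contains an atom, and applying $f$ once then forces $a_3$ into it. Since $\da{a_3}=\set{0,a_3}$ is itself a closed ideal, it is the smallest nontrivial closed ideal, and Lemma~\ref{lem:si}(1) applies.

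To conclude: if $K_3^\partial$ belonged to $\Eq(\fmoa)$, then, being subdirectly irreducible, it would be a filter algebra by Corollary~\ref{cor:sif}(1), contradicting the previous paragraph. Hence $K_3^\partial\notin\Eq(\fmoa)$, so $\mathsf{V}\subseteq(\mathsf{I}:K_3)$, and Theorem~\ref{thm:locfin} gives that $\mathsf{V}$, and therefore $\Eq(\fmoa)$, is locally finite. The only points needing care are that $n=2$ is genuinely too small (because $K_2^\partial$ really is a filter algebra) and the verification that $K_3^\partial$ is subdirectly irreducible so that Corollary~\ref{cor:sif}(1) can be invoked; the remaining steps are just the translation between the closure- and interior-algebra pictures.
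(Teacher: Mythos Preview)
Your proof is correct and reaches the same target $n=3$ via Theorem~\ref{thm:locfin}, but the tactic for verifying $\mathsf V\subseteq(\mathsf I:K_3)$ differs from the paper's. The paper argues directly at the level of the generating class: for any dual $\klam{B,i}$ of a proper filter algebra one has $B^{\text{o}}=\da{a}\cup\set{1}$, and an embedded copy of $K_3$ would force $b_2\in B^{\text{o}}\cap K_3=K_3^{\text{o}}$ (since $b_1+b_2\in\da{a}$ and $\da{a}$ is an ideal), a contradiction; then one invokes that $(\mathsf I:K_3)$ is itself a variety to pass to $\Eq$. You instead work with the single algebra $K_3$, using closure of $\mathsf V$ under subalgebras to reduce $\mathsf V\subseteq(\mathsf I:K_3)$ to $K_3\notin\mathsf V$, and then exclude $K_3^\partial$ from $\Eq(\fmoa)$ by showing it is subdirectly irreducible yet not a filter algebra, so Corollary~\ref{cor:sif}(1) does the work. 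Your route is clean because it reduces everything to inspecting one eight-element algebra, at the cost of depending on the prior classification of subdirectly irreducibles; the paper's route is self-contained and illustrates concretely why the ideal-shaped set of open elements obstructs any $K_n$ with $n\geq 3$. Your aside that $n=2$ is too small (because $K_2^\partial$ is a genuine proper filter algebra) is a nice sharpness observation that the paper does not make explicit.
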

\begin{proof}
We will show that the class  $\mathsf K$ of algebras dual to those in $\fmoap$ is contained in $(\mathsf{I}:K_3)$.
Suppose that $\klam{B,i}$ is an interior algebra with open elements $B^{\text{o}} = \da{a} \cup \set{1}$, $a \neq 1$, i.e. $\klam{B,i}$ is the dual of a proper filter algebra. Assume that $K_3$ is embeddable into $B$. We may suppose that $K_3$ is a subalgebra of $B$ with atoms $b_1,b_2,b_3$ and open elements $K_3^{\text{o}} \df \set{0,b_1, b_1+b_2, 1}$; then, $K_3^{\text{o}} \subseteq \da{a}$. Since $B^{\text{o}}$ is an ideal and $b_1 + b_2 \in K_3^{\text{o}} \setminus \set{1} \subseteq \da a$, we have $b_2 \in \da a \cap K_3 \subseteq B^{\text{o}} \cap K_3$, hence, $b_2 \in K_3^{\text{o}}$, a contradiction. Thus, each algebra dual to a proper filter algebra is in $(\mathsf{I}:K_3)$. Since $(\mathsf{I}:K_3)$ is a variety, $\Eq(\mathsf K) \subseteq (\mathsf{I}:K_3)$. By Theorem~\ref{thm:locfin}, we obtain that $\Eq(\mathsf K)$ is locally finite, thus, so is $\Eq(\fmoa)$.
\end{proof}
The following is now immediate from Theorem \ref{thm:fmasi} and the previous theorem:
\begin{corollary}\label{cor:ui}
$\Eq(\fmoa)$ is generated by the finite members of $\fmoap$.
\end{corollary}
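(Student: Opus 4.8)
The plan is to combine the two preceding results directly. Corollary~\ref{cor:sif} tells us that $\Eq(\fmoa)$ is generated by $\fmoap$, and Theorem~\ref{thm:flf} tells us that $\Eq(\fmoa)$ is locally finite; a locally finite variety is generated by its finite members, so it suffices to argue that every finite member of $\Eq(\fmoa)$ relevant to generation already lies in (the variety generated by) the finite proper filter algebras.

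More precisely, I would proceed as follows. First, recall that any variety is generated by its subdirectly irreducible members, and by Corollary~\ref{cor:sif}(1) every subdirectly irreducible algebra in $\Eq(\fmoa)$ is a filter algebra; by Theorem~\ref{thm:fmasi} each such algebra is either $\2$ or a proper filter algebra. Next, invoke local finiteness (Theorem~\ref{thm:flf}): since $\Eq(\fmoa)$ is locally finite, each of its subdirectly irreducible members is finite (being generated, as a subdirectly irreducible algebra, by a finite set — indeed it is generated by $\{x\}$ for any $x$ separating the monolith, or one simply notes that s.i.\ algebras in a locally finite variety are finite). Hence every subdirectly irreducible member of $\Eq(\fmoa)$ is a finite algebra lying in $\fmoap \cup \set{\2}$. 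Finally, as already observed in the proof of Corollary~\ref{cor:sif}(2), $\2$ is a homomorphic image of the four-element proper filter algebra, so $\2 \in \Eq(\fmoap \cap \{\text{finite}\})$; therefore the finite members of $\fmoap$ generate all subdirectly irreducibles of $\Eq(\fmoa)$, and consequently generate $\Eq(\fmoa)$ itself.

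There is essentially no obstacle here: the corollary is a bookkeeping consequence of Corollary~\ref{cor:sif} and Theorem~\ref{thm:flf}. The only point requiring a moment's care is the implicit step that a subdirectly irreducible algebra in a locally finite variety is finite — this holds because a subdirectly irreducible algebra is generated by any one element together with enough elements to witness that a specific pair lies outside the monolith's complement, hence by a finite subset, and local finiteness then forces finiteness. With that in hand the statement follows immediately.

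\begin{proof}
By Corollary~\ref{cor:sif}, $\Eq(\fmoa)$ is generated by its subdirectly irreducible members, each of which is a filter algebra, and by Theorem~\ref{thm:fmasi} each such algebra is either $\2$ or a proper filter algebra. By Theorem~\ref{thm:flf}, $\Eq(\fmoa)$ is locally finite, so every subdirectly irreducible member is finite; thus every subdirectly irreducible member of $\Eq(\fmoa)$ lies in $\fmoap \cup \set{\2}$ and is finite. Since $\2$ is a homomorphic image of the (finite) four element proper filter algebra, the finite members of $\fmoap$ generate every subdirectly irreducible member of $\Eq(\fmoa)$, and hence generate $\Eq(\fmoa)$.
\end{proof}
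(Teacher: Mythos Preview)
Your approach matches the paper's, but one step is incorrect: you assert that local finiteness of $\Eq(\fmoa)$ forces \emph{every} subdirectly irreducible member to be finite. This fails. Take any infinite Boolean algebra $B$ and some $a \in B$ with $0 \lneq a \lneq 1$, and let $f$ be the filter operator determined by $\ua{a}$ as in Theorem~\ref{thm:fa}. Then $\klam{B,f}$ is a proper filter algebra, hence subdirectly irreducible by Theorem~\ref{thm:fmasi}, yet infinite. Local finiteness only says that \emph{finitely generated} algebras are finite; a subdirectly irreducible algebra need not be finitely generated, and your informal justification --- that finitely many elements witnessing the monolith generate the algebra --- confuses ``contains a distinguished finite subset'' with ``is generated by a finite subset''.

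The repair is exactly what the paper does: invoke the standard fact that a variety is generated by its \emph{finitely generated} subdirectly irreducible members. Local finiteness (Theorem~\ref{thm:flf}) then makes these finite, and the remainder of your argument (Corollary~\ref{cor:sif}(1), Theorem~\ref{thm:fmasi}, and the observation that $\2$ is a quotient of the four element proper filter algebra) goes through unchanged.
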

\begin{proof}
Each variety is generated by its finitely generated subdirectly irreducible members. By the previous theorem, these are finite, and by Corollary \ref{cor:sif} (2) we may choose these to be in \fmoap.
\end{proof}

Next, we shall look at the canonical frame of a proper finite filter algebra $\klam{B,f}$. Suppose that $f$ is  determined by the filter $\ua{a}$, $a \not\in \set{0,1}$, and that $B$ is generated as a Boolean algebra by its atoms $a_0, \ldots, a_n$. Let \wlg $a = a_0 + \ldots + a_m$ for some $m \lneq n$. Let $F,G \in \ult{(B)}$, and recall that the canonical relation $R \df R_{f}$ is defined by $F \mathrel{R} G$ if and only if $f[G] \subseteq F$, i.e. \tiff $\ua{a} \cap G \subseteq F$.
There are two cases:
\begin{enumerate}
\item $a \in F$: Then $\ua{a} \subseteq F$, and thus, $F\mathrel{R}G$ for all $G \in \ult(B)$. Observe that $a \in F$ \tiff $F = \ua{a_i}$ for some $i \leq m$.
\item $a \not\in F$: Then, $F = \mathop{\uparrow}a_i$ for some $m < i \leq n$. If $G = \mathop{\uparrow}a_j$, then $F \mathrel{R} G$ entails  $a + a_j \in F$, and subsequently, $a_j \in F$, since $a \not\in F$ and $F$ is an ultrafilter. It follows that $j = i$.
\end{enumerate}
The graph of $R$ is shown in Figure \ref{fig:filtalg}.
\begin{figure}[tb]
\caption{The canonical relation of a finite filter algebra of depth two}\label{fig:filtalg}
\vspace{5mm}
\centering
\setlength{\unitlength}{1558sp}%
\begingroup\makeatletter\ifx\SetFigFont\undefined%
\gdef\SetFigFont#1#2#3#4#5{%
  \reset@font\fontsize{#1}{#2pt}%
  \fontfamily{#3}\fontseries{#4}\fontshape{#5}%
  \selectfont}%
\fi\endgroup%
\begin{picture}(5702,4617)(1402,-6263)
{\color[rgb]{0,0,0}\thicklines
\put(4253,-4830){\oval(5672,2838)}
}%
{\color[rgb]{0,0,0}\put(3781,-3885){\vector(-1, 2){944.600}}
}%
{\color[rgb]{0,0,0}\put(4253,-3885){\vector( 0, 1){1889}}
}%
{\color[rgb]{0,0,0}\put(4725,-3885){\vector( 1, 2){944.600}}
}%
\put(6143,-4830){\makebox(0,0)[b]{\smash{{\SetFigFont{8}{9.6}{\rmdefault}{\mddefault}{\updefault}{\color[rgb]{0,0,0}$F_{a_m}$}%
}}}}
\put(2363,-4830){\makebox(0,0)[b]{\smash{{\SetFigFont{8}{9.6}{\rmdefault}{\mddefault}{\updefault}{\color[rgb]{0,0,0}$F_{a_0}$}%
}}}}
\put(4253,-4830){\makebox(0,0)[b]{\smash{{\SetFigFont{8}{9.6}{\rmdefault}{\mddefault}{\updefault}{\color[rgb]{0,0,0}\ldots \ldots}%
}}}}
\put(4253,-1996){\makebox(0,0)[b]{\smash{{\SetFigFont{8}{9.6}{\rmdefault}{\mddefault}{\updefault}{\color[rgb]{0,0,0}\ldots \ldots}%
}}}}
\put(5198,-1996){\makebox(0,0)[lb]{\smash{{\SetFigFont{8}{9.6}{\rmdefault}{\mddefault}{\updefault}{\color[rgb]{0,0,0}$F_{a_n}$}%
}}}}
\put(3308,-1996){\makebox(0,0)[rb]{\smash{{\SetFigFont{8}{9.6}{\rmdefault}{\mddefault}{\updefault}{\color[rgb]{0,0,0}$F_{a_{m+1}}$}%
}}}}
\end{picture}%
\end{figure}
It is easy to see that $\klam{\ult(B),R}$ satisfies the conditions
\begin{xalignat}{2}
&(\forall F)(\exists G)[F\mathrel{R}G \tand (\forall H)(G\mathrel{R}H \Implies G = H)], \label{M} \\
& (F\mathrel{R}G \tand G\mathrel{R}H) \Implies (G\mathrel{R}F \tor H\mathrel{R}G).   \label{B2}
\end{xalignat}
Quasiorders that satisfy \eqref{M} and \eqref{B2} have depth two, and consist of a single cluster on level one and simple clusters on level two. If $U$ is the lower level and $V$ is the upper level, then $R = (U \times U) \cup (U \times V) \cup 1'$. We see that $\mathrel{R}\restrict U^2$ is the universal relation on $U$ and $\mathrel{R}\restrict V^2$ is the identity on $V$. Based on this observation we call a relation of this form a \emph{ui-relation}. If $\klam{B,f}$ is a filter algebra we call $f$ a \emph{ui-operator} and denote it by $f^{\text{ui}}$. Note that a ui-relation is the converse of an iu-relation, and thus, their operators are conjugate functions in the sense of~\cite{jt51}.

According to~\cite[p. 45f]{bs84} the reflexive and transitive frames that satisfy \eqref{M} and \eqref{B2} are those that validate the logic $\mathbf{S4MB_2}$. Conversely, it is easy to show that the complex algebra of a ui--relation is a filter algebra. Thus, applying Corollary~\ref{cor:ui}, we obtain
\begin{theorem}\label{thm:logfmoa}
$\Eq(\fmoa) = \Eq(\mathbf{S4MB_2})$.
\end{theorem}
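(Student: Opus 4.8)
The plan is to prove the two inclusions $\Eq(\fmoa)\subseteq\Eq(\mathbf{S4MB_2})$ and $\Eq(\mathbf{S4MB_2})\subseteq\Eq(\fmoa)$ separately, using Corollary~\ref{cor:ui} for the first and the Kripke completeness of $\mathbf{S4MB_2}$ (from \cite{bs84}) for the second, together with the duality between finite closure algebras and finite quasiorders.

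\emph{First inclusion.} By Corollary~\ref{cor:ui} it is enough to check that each finite proper filter algebra $\klam{B,f}$ validates $\mathbf{S4MB_2}$. Since $B$ is finite, $\klam{B,f}\cong\Cm(\klam{\ult(B),R_f})$, and by the computation leading to Figure~\ref{fig:filtalg} the frame $\klam{\ult(B),R_f}$ is a quasiorder satisfying \eqref{M} and \eqref{B2}. By \cite{bs84} such a frame validates $\mathbf{S4MB_2}$, hence so does its complex algebra, and as these algebras generate $\Eq(\fmoa)$ we obtain $\Eq(\fmoa)\subseteq\Eq(\mathbf{S4MB_2})$.

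\emph{Second inclusion.} As $\mathbf{S4MB_2}$ is Kripke complete it equals $\Log_K$, where $K$ is the class of all quasiorders satisfying \eqref{M} and \eqref{B2}; since validity of a formula in a frame coincides with validity of the corresponding equation in its complex algebra, this gives $\Eq(\mathbf{S4MB_2})=\mathbf{HSP}(\set{\Cm(\F):\F\in K})$, so it suffices to place each such $\Cm(\F)$ in $\Eq(\fmoa)$. For $w\in W$ the set $R(w)$ is $R$-closed and thus carries a generated subframe $\F_w$, which again lies in $K$ (generated subframes preserve reflexivity, transitivity, \eqref{M} and \eqref{B2}), is rooted at $w$, and satisfies $\Cm(\F)\hookrightarrow\prod_{w\in W}\Cm(\F_w)$ via $X\mapsto(X\cap R(w))_{w\in W}$. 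It then remains to identify the rooted members of $K$: reading \eqref{B2} on the cluster order $\leq_R$ forbids chains of three distinct clusters and forces every non-maximal cluster to be minimal (hence depth at most two), \eqref{M} forces every maximal cluster to be a singleton, and the presence of a root makes the least cluster unique and below every point, so $\F_w$ is a ui-relation. By the remark following Figure~\ref{fig:filtalg} each $\Cm(\F_w)$ is then a filter algebra, whence $\Cm(\F)\in\mathbf{SP}(\fmoa)\subseteq\Eq(\fmoa)$.

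\emph{Main obstacle.} The first inclusion is routine once the canonical frame has been computed. The work is concentrated in the second: one will have to extract the depth-two, essentially bipartite shape of the frames in $K$ from the conditions \eqref{M} and \eqref{B2}, and — because a general $\mathbf{S4MB_2}$-frame (for instance a disjoint union of two ui-relations) need not have a filter-algebra complex algebra — one will have to pass through the embedding of $\Cm(\F)$ into the product of the $\Cm(\F_w)$ in order to reduce to the rooted case, where the structure theorem applies.
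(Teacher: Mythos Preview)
Your argument is correct and follows the paper's strategy. The first inclusion is exactly the paper's: invoke Corollary~\ref{cor:ui}, observe that the canonical frame of a finite proper filter algebra satisfies \eqref{M} and \eqref{B2}, and conclude via \cite{bs84}. For the second inclusion the paper is extremely brief: it cites Kripke completeness, asserts that quasiorders satisfying \eqref{M} and \eqref{B2} ``consist of a single cluster on level one and simple clusters on level two'', notes that complex algebras of ui--relations are filter algebras, and declares the result.

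Your write-up is in fact more careful than the paper's on exactly the point you flag as the main obstacle. A general $\mathbf{S4MB_2}$-frame need not be a ui--relation (your disjoint-union example, or a ``V'' with two incomparable bottom points below a single top, already shows this), so the paper's description of such frames is literally correct only in the rooted case. You make this honest by embedding $\Cm(\F)$ into $\prod_{w} \Cm(\F_w)$ via $X\mapsto (X\cap R(w))_{w}$ (which is a modal embedding because $R(u)\subseteq R(w)$ whenever $u\in R(w)$, by transitivity), and then analysing the rooted $\F_w$: there \eqref{B2} bounds the depth by two, \eqref{M} forces every maximal cluster to be a singleton, and the root gives a unique minimal cluster, so $\F_w$ is a ui--relation and $\Cm(\F_w)\in\fmoa$ by the remark after Figure~\ref{fig:filtalg}. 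This is the standard decomposition into rooted generated subframes that the paper leaves implicit; your version simply makes it explicit.
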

Owing to the relation of the canonical relations we may say that in some sense the logic $\mathbf{S4.3DumB_2}$ of ideal algebras is the converse of the logic $\mathbf{S4MB_2}$ of filter algebras.

\section{{uu}-- and ii--algebras and their logics}\label{sec:uuii}

In this section we consider the two remaining types of closure algebras of depth at most two which may be related to ideals. Suppose that $a \neq 0$, and that $f$ is a closure operator on $B$ such that $B^{\text{c}} = \set{0,a,1}$. Then,
\begin{gather*}
f(x) \df
\begin{cases}
0, &\text{if } x = 0, \\
a, &\text{if } 0 \lneq x \leq a, \\
1, &\text{otherwise}.
\end{cases}
\end{gather*}
Clearly, $f$ is a closure operator. Considering the ideal $I \df \da{a}$, we see that $f$ maps the nonzero elements of $I$ to its maximum, and therefore we call $\klam{B,f}$ a \emph{MaxId algebra}. Technically, we could also allow $a = 0$, i.e. $I = \set{0}$, for a MaxId algebra. In this case $f$ is the unary discriminator, and $f$ can also be described by $a = 1$. So, we shall always assume $a \gneq 0$. The class of MaxId algebras is denoted by \mmoa.
\begin{theorem}\label{thm:uu}
$\mmoa$ is a positive universal class.
\end{theorem}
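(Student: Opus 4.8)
The plan is to mirror the proof of Theorem~\ref{thm:fmopu} for filter algebras, adapting it to the MaxId situation. Recall that a MaxId algebra $\klam{B,f}$ is determined by the ideal $I = \da a$ (with $a \gneq 0$), so that $B^{\text{c}} = \set{0,a,1}$ and $f$ sends every nonzero element of $\da a$ to $a$ and everything outside $\da a$ to $1$. I want to show $\mmoa$ is closed under $\hom$, $\sub$, and $\prodsu$; since $\mmoa$ is clearly closed under isomorphic copies, this gives a positive universal class (equivalently, one could directly write down a positive universal first-order axiomatization, e.g. $(\forall x)[f(x) = 0 \tor f(x) = f(1) \tor (x \leq f(x) \tand f(x) = f(f(x)) \tand (x \neq 0 \implies f(x) \neq 0))]$ together with the sentence saying $B^{\text{c}}$ has at most one element strictly between $0$ and $1$ — but the semantic route via Łoś and direct verification is cleaner).

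\textbf{Closure under subalgebras.} First I would take $\klam{B,f} \in \mmoa$ with determining ideal $\da a$, and let $\klam{A,g} \leq \klam{B,f}$. The set of closed elements of $\klam{A,g}$ is $A^{\text{c}} = B^{\text{c}} \cap A = \set{0,a,1} \cap A$. If $a \notin A$, then $A^{\text{c}} = \set{0,1}$ and $\klam{A,g}$ is a discriminator algebra, which (as noted in the text, with the convention $a=1$) counts as a MaxId algebra. If $a \in A$, then $A^{\text{c}} = \set{0,a,1}$, and by Theorem~\ref{thm:bdcl} the operator $g$ is recovered as $g(x) = \min(\ua x \cap A^{\text{c}})$, which is exactly the MaxId operator on $A$ for the ideal $\da a \cap A$. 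Either way $\klam{A,g} \in \mmoa$.

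\textbf{Closure under homomorphic images.} Let $I_0$ be a congruence (closed) ideal of $\klam{B,f}$, $\pi\colon B \onto B/I_0$ the quotient, and $g$ the induced operator, $g(\pi(x)) = \pi(f(x))$. If $a \in I_0$ then every element of $B^{\text{c}}$ maps to $\pi(0)$ or $\pi(1)$, so $g$ is the discriminator on $B/I_0$ and we are done. If $a \notin I_0$, then $\pi(a) \notin \set{\pi(0),\pi(1)}$ (the latter because $a$ closed and $1 \not\leq a$), and I claim $(B/I_0)^{\text{c}} = \set{\pi(0),\pi(a),\pi(1)}$ with $g$ the corresponding MaxId operator: one checks $g(\pi(x)) = \pi(0)$ iff $x \in I_0$; $g(\pi(x)) = \pi(a)$ iff $0 \lneq \pi(x) \leq \pi(a)$, using that $0 \lneq x \leq a$ downstairs forces $f(x)=a$ and conversely $\pi(x)\le\pi(a)$ lifts to $x \cdot (-a) \in I_0 \subseteq \da a$-translated back appropriately; and $g(\pi(x)) = \pi(1)$ otherwise. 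This is the computation in Theorem~\ref{thm:fmopu} done for ideals instead of filters, and it is the only place requiring genuine care.

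\textbf{Closure under ultraproducts.} Let $\klam{B_j, f_j} \in \mmoa$ for $j \in J$, with determining element $a_j$, and let $\klam{B,f} = \prod_j \klam{B_j,f_j} / \mathcal U$ for an ultrafilter $\mathcal U$ on $J$. Set $a \df [\klam{a_j}]_{\mathcal U}$. By Łoś's theorem, the first-order properties ``$f$ is a closure operator'', ``$a$ is closed'', and ``for all $x$, $x=0 \tor (0 \lneq x \leq a \tand f(x)=a) \tor f(x)=1$'' transfer from each $B_j$ to $B$; hence $\klam{B,f}$ is a MaxId algebra with determining ideal $\da a$ (if $a=0$ or $a=1$ in the ultraproduct, it is the discriminator, still in $\mmoa$). \textbf{The main obstacle} is the homomorphic-image step: one must verify precisely that the induced operator on the quotient is again of MaxId form and, in particular, that when $a \notin I_0$ the element $\pi(a)$ really is the unique proper closed element — i.e. that no new closed elements are created and $\pi(a) \neq \pi(1)$. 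Once the case split on whether $a$ (resp. $a_j$, resp. $a \bmod I_0$) is absorbed is handled as above, the rest is routine and parallel to the filter-algebra argument.
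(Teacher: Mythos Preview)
Your argument is correct and mirrors the paper's own treatment of filter algebras (Theorem~\ref{thm:fmopu}), but for \mmoa\ the paper takes a different route: it exhibits an explicit positive universal axiomatization, showing that a closure algebra $\klam{B,f}$ with $\card{B} \geq 2$ is a MaxId algebra if and only if it satisfies
\[
(\forall x,y)\bigl[f(x)=0 \;\tor\; f(y)=0 \;\tor\; f(x)=1 \;\tor\; f(y)=1 \;\tor\; f(x)=f(y)\bigr]
\]
and
\[
(\forall x,y)\bigl[y=0 \;\tor\; x \leq f(y) \;\tor\; f(x)=1\bigr],
\]
and then verifies both directions of the equivalence. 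Your semantic verification of closure under $\sub$, $\hom$, $\prodsu$ is equally valid and more uniform with the filter-algebra section; the paper's approach has the compensating advantage of producing the axioms explicitly. One simplification you overlooked: the ``main obstacle'' you flag in the homomorphic-image step evaporates once you observe that the only closed ideals of a MaxId algebra determined by $a$ are $\set{0}$, $\da a$, and $B$, so the case $a \notin I_0$ forces $I_0 = \set{0}$ and the quotient is $B$ itself. (Incidentally, the parenthetical ``direct axiomatization'' you sketch is not positive, since it contains inequalities $x \neq 0$, $f(x) \neq 0$; the paper's disjunctive forms above avoid this.)
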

\begin{proof}
We will show that a closure algebra $\klam{B,f}$ with $\card{B} \geq 2$ is a MaxId--algebra  \tiff it satisfies
\begin{align}
& (\forall x,y)[(0 \lneq f(x), f(y) \lneq 1) \Implies f(x) = f(y)], \label{uu2} \\
& (\forall x,y)[(y \neq 0 \tand x \not\leq f(y)) \Implies f(x) = 1]. \label{uu3}
\end{align}
Clearly, \eqref{uu2} and \eqref{uu3} are equivalent to positive universal sentences in disjunctive form.

\aright If $a = 1$, then \eqref{uu2} and \eqref{uu3} are vacuously satisfied. Suppose that $a \neq 1$; then, $f$ has exactly the values $\set{0,a,1}$. If $0 \lneq f(x), f(y) \lneq 1$, we have $f(x) = f(y) = a$, and \eqref{uu2} is satisfied.

Suppose that $y \neq 0 \tand x \not\leq f(y)$; then, $x \neq 0$ and $f(y) \neq 1$. Since $0 \neq y \leq f(y) \lneq 1$, we have $f(y) = a$, and $x \not\leq f(y)$ implies $f(x) = 1$.

\aleft Suppose that $f$ is a closure operator that fulfils \eqref{uu2} and \eqref{uu3}. If $\Bc = \set{0,1}$, then the fact that $f$ is expanding implies that $f$ is the unary discriminator, and we set $a \df 1$. Otherwise, there is some $b \in B$ such that $0 \lneq f(b) \lneq 1$, and we set $a \df f(b)$. This is well defined by \eqref{uu2}. Let $x \neq 0$. There are two cases:
\begin{enumerate}
\item $x \leq f(b)$: Then $0 \lneq f(x)  \leq f(f(b)) = f(b) \lneq 1$, and thus, $f(x) = f(b)$ by \eqref{uu2}.
\item $x \not\leq f(b)$: Then, $f(b) \neq 0$ implies that $b \neq 0$, and thus, $f(x) = 1$ by \eqref{uu3}.
\end{enumerate}
This completes the proof.
\end{proof}

Theorem \ref{thm:uu} implies that $\mmoa = \hom\sub\prodsu(\mmoa)$, and applying J{\'o}nsson's Lemma we obtain

\begin{corollary}\label{cor:sifmoa}
If $\klam{B,f} \in \Eq(\mmoa)$ is subdirectly irreducible, then $\klam{B,f} \in \mmoa$.
\end{corollary}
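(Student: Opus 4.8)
The plan is to mirror the argument already used for Corollary~\ref{cor:sif}(1). First I would invoke J{\'o}nsson's Lemma in the form of Lemma~\ref{lem:jon}: since $\klam{B,f}$ is a subdirectly irreducible member of $\Eq(\mmoa) = \mathbf{HSP}(\mmoa)$, we get
\begin{gather*}
\klam{B,f} \in \Si\Eq(\mmoa) \subseteq \hom\sub\prodsu(\mmoa).
\end{gather*}

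Next I would appeal to Theorem~\ref{thm:uu}, which establishes that $\mmoa$ is a positive universal class, axiomatized (among closure algebras with at least two elements) by \eqref{uu2} and \eqref{uu3}. A class axiomatized by first-order positive universal sentences is closed under surjective homomorphic images (by positivity), under subalgebras (by universality), and under ultraproducts (being first-order); hence $\hom\sub\prodsu(\mmoa) = \mmoa$. Combining this with the display above yields $\klam{B,f} \in \mmoa$, as claimed. This is, of course, precisely the instance for $\K = \mmoa$ of the remark recorded immediately after Lemma~\ref{lem:jon}, and it is also the observation flagged just before the statement of the corollary.

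There is essentially no obstacle left at this stage: all the substantive work has been front-loaded into Theorem~\ref{thm:uu}, where the genuinely non-trivial content was the identification of the positive universal axioms \eqref{uu2}, \eqref{uu3} and the verification that every closure algebra of cardinality at least two satisfying them is a MaxId algebra (including the degenerate case $a = 1$ giving the unary discriminator). Once that theorem is available, the corollary is a one-line consequence of J{\'o}nsson's Lemma and the closure properties of positive universal classes; I would therefore keep the proof to the two short steps above rather than expanding it.
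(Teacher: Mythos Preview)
Your proposal is correct and coincides with the paper's own argument: the paper records just before the corollary that Theorem~\ref{thm:uu} gives $\mmoa = \hom\sub\prodsu(\mmoa)$ and then invokes J{\'o}nsson's Lemma, which is exactly what you do. No changes are needed.
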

%

\begin{theorem}\label{thm:si-mmoa}
Each MaxId algebra $\klam{B,f}$ with $\card{B} \geq 2$ is subdirectly irreducible.
\end{theorem}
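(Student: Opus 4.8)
The plan is to use the characterization from Lemma~\ref{lem:si}(1): a closure algebra is subdirectly irreducible if and only if it has a smallest nontrivial closed ideal. So I would first recall that the congruences of a closure algebra correspond to its closed ideals, and that ``nontrivial'' means the ideal is not $\set{0}$. Thus it suffices to exhibit a closed ideal $J$ of $\klam{B,f}$ with $J \neq \set{0}$ such that every closed ideal $J'$ with $J' \neq \set{0}$ satisfies $J \subseteq J'$.

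The natural candidate is $J \df \da{a}$, where $a$ is the determining element of the MaxId algebra (recall we always assume $a \gneq 0$, including the discriminator case $a = 1$ where $\da{a} = B$). First I would check that $\da{a}$ is closed: for $x \in \da{a}$, either $x = 0$ and $f(x) = 0 \in \da{a}$, or $0 \lneq x \leq a$ and $f(x) = a \in \da{a}$; in the discriminator case $\da{a} = B$ is trivially closed. And $\da{a} \neq \set{0}$ since $a \gneq 0$. So $\da{a}$ is a nontrivial closed ideal.

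The key step is minimality: let $J'$ be any closed ideal with $J' \neq \set{0}$, and pick $x \in J'$ with $x \neq 0$. Then $f(x) \in J'$ since $J'$ is closed, and $f(x) \in \set{a,1}$ by the definition of $f$ (as $x \neq 0$). Since $a \leq f(x)$ in both cases ($f(x) = a$ or $f(x) = 1 = a$ when $a = 1$, and $a \leq 1$ in general), we get $a \in J'$, hence $\da{a} \subseteq J'$. Therefore $\da{a}$ is contained in every nontrivial closed ideal, so it is the smallest one, and by Lemma~\ref{lem:si}(1), $\klam{B,f}$ is subdirectly irreducible.

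I do not anticipate a serious obstacle here; the argument is a direct application of the Blok characterization together with the explicit form of $f$. The only points requiring a little care are (i) handling the degenerate case $a = 1$ (the discriminator algebra) uniformly — but this is already the classical fact that discriminator algebras are simple, hence subdirectly irreducible — and (ii) being precise that the relevant notion is ``closed ideal'' rather than arbitrary ideal, so that the correspondence with congruences (cited earlier from~\cite{bp82}) is correctly invoked. If one prefers, the $a = 1$ case can simply be dispatched at the outset by noting $\klam{B,f^\one}$ is simple, and then the main argument run for $a \lneq 1$.
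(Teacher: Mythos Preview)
Your proof is correct and follows exactly the same approach as the paper: both identify $\da{a}$ as the smallest nontrivial closed ideal and invoke the Blok characterization (Lemma~\ref{lem:si}(1)). The paper's proof is simply a one-line version of your argument, without explicitly spelling out the closure and minimality checks.
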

\begin{proof}
If $a$ is the smallest element of $B^{\text{c}} \setminus\set{0}$, then $\da{a}$ is the smallest closed nontrivial ideal of $B$, and thus, $B$ is subdirectly irreducible.
\end{proof}

\begin{theorem}
$\Eq(\maxid)$ is locally finite.
\end{theorem}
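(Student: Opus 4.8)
The plan is to mimic the proof of Theorem~\ref{thm:flf} by passing to the dual interior algebras and applying the characterization of locally finite varieties in Theorem~\ref{thm:locfin}. First I would describe the open elements of the dual of a MaxId algebra: if $\klam{B,f}$ has $\Bc = \set{0,a,1}$, then dualizing gives $B^{\text{o}} = \set{0, -a, 1}$, so the dual interior algebra has a three-element chain of open elements (or just $\set{0,1}$ in the degenerate discriminator case). By Corollary~\ref{cor:sifmoa} and Theorem~\ref{thm:si-mmoa} it suffices to show that the variety generated by the duals of MaxId algebras sits inside $(\mathsf{I}:K_n)$ for a suitable $n$; since $(\mathsf{I}:K_n)$ is a variety, it is enough to check that every dual of a MaxId algebra lies in it.

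The key step is to pick $n$ and show $K_n$ does not embed. I expect $n = 3$ to work, exactly as in Theorem~\ref{thm:flf}. Suppose $K_3$ embeds as a subalgebra of $\klam{B,i}$, the dual of a MaxId algebra with $B^{\text{o}} = \set{0,c,1}$ where $c = -a$. Write $K_3$ with atoms $b_1,b_2,b_3$ and open elements $K_3^{\text{o}} = \set{0, b_1, b_1+b_2, 1}$. Then $b_1$ and $b_1+b_2$ are open in $B$, so they lie in $\set{0,c,1}$; since they are distinct and neither is $0$ or $1$ (as $b_1 \lneq b_1+b_2 \lneq 1$ and $b_1 \gneq 0$), this already gives two distinct elements of the three-element set $\set{0,c,1}$ strictly between $0$ and $1$, which is impossible. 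Hence $K_3$ does not embed into the dual of any MaxId algebra, so the dual class is contained in $(\mathsf{I}:K_3)$, and therefore so is the variety it generates.

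Finally, by Theorem~\ref{thm:locfin} the variety generated by the duals is locally finite, and since the dualizing correspondence is a definitional equivalence preserving finiteness of finitely generated algebras, $\Eq(\maxid)$ is locally finite as well. I do not anticipate a serious obstacle; the only point requiring a little care is the bookkeeping of the degenerate case $a = 1$ (the discriminator algebra, whose dual is the two-element chain), which trivially lies in $(\mathsf{I}:K_3)$ and so causes no trouble. An alternative, if one prefers to avoid duality, is a direct counting argument: any subalgebra of a MaxId algebra generated by $k$ elements has its closed-element set contained in $\set{0,a,1}$ intersected with the subalgebra, hence the closure operator on the generated Boolean subalgebra takes at most three values, forcing the generated modal subalgebra to coincide with a finite Boolean algebra on at most $2^{2^k}$ atoms; but the duality route is cleaner and reuses the machinery already set up.
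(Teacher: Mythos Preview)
Your proposal is correct and follows essentially the same approach as the paper: observe that the dual of a MaxId algebra has at most three open elements, so $K_3$ (which has four) cannot embed, and then invoke Theorem~\ref{thm:locfin}. The paper's proof is a two-line version of exactly this argument; your extra care with the embedding details and the degenerate case is fine but not needed, and the references to Corollary~\ref{cor:sifmoa} and Theorem~\ref{thm:si-mmoa} are superfluous since $(\mathsf{I}:K_3)$ is already a variety.
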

\begin{proof}
For any algebra $B$ in \maxid, $\Bo$ has at most three open elements, and thus, $K_3$ cannot be embedded in it. Now use Theorem~\ref{thm:locfin}.
\end{proof}

Next, we consider the canonical frames of MaxId algebras.

\begin{theorem}\label{thm:maxid}
\begin{enumerate}
\item The canonical frame of a MaxId algebra is a chain of length at most two of two clusters.
\item The complex algebra of a chain of at most two clusters is a MaxId algebra.
\end{enumerate}
\end{theorem}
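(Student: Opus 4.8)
The plan is to analyze both directions via the canonical relation formula \eqref{RfAt} and Theorem~\ref{thm:framealg}, exploiting the fact that a MaxId algebra has only three closed elements.

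For part (1), I would take a MaxId algebra $\klam{B,f}$ with determining element $a$, assume first that $B$ is finite (which suffices since, combined with local finiteness, the canonical relation of an arbitrary MaxId algebra is determined by its finite quotients — but more directly one can just compute $R_f$ on ultrafilters using $F \mathrel{R_f} G \Iff f[G] \subseteq F$). Using \eqref{RfAt}, for atoms $b,c$ we have $F_b \mathrel{R_f} F_c \Iff b \leq f(c)$. Split on whether $c \leq a$ or not: if $0 \lneq c \leq a$ then $f(c) = a$, so $F_b \mathrel{R_f} F_c$ iff $b \leq a$; if $c \not\leq a$ then $f(c) = 1$, so $F_b \mathrel{R_f} F_c$ always. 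Hence the lower level (cluster) is $U \df \set{F_b : b \in \At(B), b \leq a}$ and the upper level is $V \df \set{F_c : c \in \At(B), c \not\leq a}$, and one reads off that $R_f \restrict U^2$ and $R_f \restrict V^2$ are both universal, while every point of $U$ is $R_f$-related to every point of $V$ but not conversely. So $R_f = (U \times U) \cup (U \times V) \cup (V \times V)$, a chain of length at most two consisting of (at most) two clusters; it degenerates to a single cluster exactly when $U$ or $V$ is empty, i.e. when $a$ is not an atom-sum proper situation. For the infinite case I would argue that $\set{0}\cup\set{a}\cup\set{1}$ being the closed elements forces every ultrafilter either to contain $a$ or to contain $-a$, and the same case analysis via $f[G] \subseteq F$ gives the two-cluster chain with the same relational structure.

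For part (2), let $\F = \klam{W,R}$ be a chain of at most two clusters, say $R = (U \times U) \cup (U \times V) \cup (V \times V)$ with $U$ the lower and $V$ the upper cluster (allowing either to be empty). Then $R$ is a quasiorder, so $\Cm(\F) = \klam{2^W, \poss{R}}$ is a closure algebra by the remark after the definition of closure algebra. It remains to identify its closed elements: $X \subseteq W$ is closed iff $\poss{R}(X) \subseteq X$, i.e. iff $X$ is an $R$-downward-closed... rather, $R$-"predecessor-closed" set; concretely, since everything in $U$ sees everything in $V$, any closed set meeting $V$ must contain all of $U$ that can reach it — working it out, the closed sets are exactly $\z$, $W$, and (when both $U,V$ nonempty) the set $V$ itself, or dually $U$ — I would verify which by direct computation of $\poss{R}$. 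This yields $\set{0, c, 1}$ for a single element $c$, matching the MaxId definition; the degenerate one-cluster case gives the discriminator, which we agreed to subsume. Invoking Theorem~\ref{thm:framealg}(1), $\Cm(\F)$ is atomic with completely additive operator, so the structure is genuinely of the stated form.

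The main obstacle I anticipate is the bookkeeping in part (1) for the \emph{infinite} canonical frame: one must be careful that "the canonical frame is a chain of two clusters" is literally true as stated (not merely up to the $T_0$ quotient), and in particular that there are no stray ultrafilters outside the two clusters and that the relation really is transitive and reflexive at the level of $W$ rather than only on clusters. Handling this cleanly likely requires observing that since $\Bc = \set{0,a,1}$, for any ultrafilter $F$ either $a \in F$ or $-a \in F$, and that $f[G] \subseteq F$ reduces — because the only nontrivial closed value is $a$ — to a single membership question about whether $a \in G$; this collapses the verification to the two cases above. A secondary, minor obstacle in part (2) is correctly pinning down which singleton of nontrivial closed element appears (it is $V$, the up-set of the top cluster, under the convention that $\poss{R}(X)$ collects points seeing $X$), but this is a one-line check.
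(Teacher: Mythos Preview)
Your approach to part (1) is essentially the paper's, though you make it harder than necessary by splitting into finite and infinite cases. The paper works directly with arbitrary ultrafilters from the start: setting $U = \{F \in \Ult(B): a \in F\}$ and $V = \{F \in \Ult(B): a \not\in F\}$, one observes that $f[G] \subseteq \{a,1\}$ for any $G$, so $F \in U$ gives $F \mathrel{R_f} G$ for all $G$, while $F \in V$ forces $a \notin f[G]$, i.e.\ $G \in V$. This handles all MaxId algebras at once; the detour through atoms and \eqref{RfAt} is not needed, and local finiteness plays no role here.

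For part (2) your strategy is sound but your guess at the end is wrong: the nontrivial closed element is $U$, not $V$. Since $\poss{R}(X) = \{w: R(w) \cap X \neq \emptyset\}$, a set is closed exactly when it is downward closed in the quasiorder, and the only such sets in a two-cluster chain with $U$ below $V$ are $\emptyset$, $U$, and $W$. (Your own sentence ``any closed set meeting $V$ must contain all of $U$'' already shows $V$ is not closed.) The paper avoids this slip by taking a slightly more direct route: rather than characterizing the closed elements, it simply computes $\poss{R}$ on singletons --- $\conv{R}(x) = U$ for $x \in U$ and $\conv{R}(x) = W$ for $x \in V$ --- and uses complete additivity to conclude that the range of $\poss{R}$ is $\{\emptyset, U, W\}$, which is exactly the MaxId form with determining element $U$.
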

\begin{proof}
1.  Let $\klam{B,f}$ be a MaxId algebra determined by $a$, and set $U \df \set{F \in \ult(B): a \in F}$, and $V \df \set{F \in \ult(B): a \not\in F}$.  Let $F,G \in \ult(B)$, and recall that $F \mathrel{R_{f}} G$ \tiff $f[G] \subseteq F$. First, suppose that $f$ is the unary discriminator, i.e. that $a = 1$. Then, $V = \z$ and $R_f$ is the universal relation on $\ult(B)$, which shows that  the canonical frame consists of one cluster. Otherwise, suppose that $0 \lneq a \lneq 1$; then neither $U$ nor $V$ are empty. Suppose that $F \in U$. Then, $f[G] \subseteq \set{a,1} \subseteq F$, and thus, $F\mathrel{R_f}G$; this implies  $U \times \ult(B) \subseteq R_f$. If $F \in V$, i.e. $a \not\in F$, then $f[G]  \subseteq F$ \tiff $a \not\in G$, which shows that $V \times V \subseteq \mathrel{R_f}$, and $(V \times U)\; \cap \mathrel{R_f} = \z$.  Altogether, $R_{f} = (U \times U) \cup (U \times V) \cup (V \times V)$, and so $R$ has the desired form.

2. Suppose that $\klam{W,R}$ is a chain of at most two clusters. If $R$ has just one cluster, then $R$ is the universal relation on $W$, and $\poss{R}$ is the unary discriminator; hence, $\klam{2^W, \poss{R}}$ is a MaxId algebra. Next, suppose that $R$ has two levels, $U,V$, and $R = (U \times U) \cup (U \times V) \cup (V \times V)$. We will show that $\poss{R}(X) \in \set{\z, U, W}$ for all $X \subseteq W$. Since $\klam{R}$ is completely additive, it is sufficient to consider singletons. Let $x \in U$; then, $\poss{R}(\set{x}) = \conv{R}(x) = U$. If $x \in V$, then $\conv{R}(x) = W$. Hence $\poss{R}$ is a MaxId operator.
\end{proof}
It follows that $\Eq(\mmoa)$ is generated by the complex algebras of finite frames of depth at most two, each level of which contains one cluster. It is well known from e.g.~\cite[Theorem 1]{Fine} or~\cite[p. 52]{bs84} that frames of this form validate the logic $\mathbf{S4.3B_2}$, so that we obtain
\begin{theorem}\label{thm:logmmoa}
$\Eq(\mmoa) = \Eq(\mathbf{S4.3B_2})$.
\end{theorem}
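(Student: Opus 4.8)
The plan is to establish $\Eq(\mmoa) = \Eq(\mathbf{S4.3B_2})$ by sandwiching both classes between the complex algebras of the appropriate finite frames, using the structural results already obtained. By Theorem~\ref{thm:maxid}, the canonical frame of any MaxId algebra is a chain of at most two clusters, hence a reflexive, transitive frame satisfying the frame condition for $\mathbf{B_2}$ (either it has a single cluster, or it has two clusters $U,V$ with $R = (U\times U)\cup(U\times V)\cup(V\times V)$, which visibly satisfies $(x\mathrel{R}y \tand y\mathrel{R}z)\Implies(y\mathrel{R}x \tor z\mathrel{R}y)$) and the frame condition for $\mathbf{.3}$ (any two $R$-successors of a point are $R$-comparable, again immediate from the two-cluster description). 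Thus every MaxId algebra validates $\mathbf{S4.3B_2}$, so $\mmoa \subseteq \Eq(\mathbf{S4.3B_2})$, and since the latter is a variety, $\Eq(\mmoa) \subseteq \Eq(\mathbf{S4.3B_2})$.

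For the reverse inclusion I would invoke Kripke completeness of $\mathbf{S4.3B_2}$. The cited facts (\cite[Theorem 1]{Fine}, \cite[p. 52]{bs84}) tell us that $\mathbf{S4.3B_2}$ is determined by the class of frames that are quasiorders of depth at most two in which each level is a single cluster. Passing to $T_0$-quotients does not change the logic, so $\mathbf{S4.3B_2}$ is in fact determined by the \emph{finite} partially ordered frames of this shape: a chain of at most two clusters. By Theorem~\ref{thm:maxid}(2), the complex algebra of any such finite frame is a MaxId algebra. Hence $\Eq(\mathbf{S4.3B_2})$ is generated by a class of MaxId algebras, giving $\Eq(\mathbf{S4.3B_2}) \subseteq \Eq(\mmoa)$. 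Combining the two inclusions yields the claimed equality.

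The one point requiring a little care is the reduction from arbitrary (possibly infinite) frames of $\mathbf{S4.3B_2}$ to finite ones whose complex algebras are MaxId algebras. For this I would rely on the finite model property of $\mathbf{S4.3B_2}$ (or, alternatively, on Theorem~\ref{thm:locfin} together with the observation, already used in the excerpt, that any MaxId algebra has at most three open elements, so $\Eq(\mmoa)$ is locally finite and hence generated by its finite members — which are complex algebras of finite chains of at most two clusters by the duality of Theorems~\ref{thm:framealg} and~\ref{thm:maxid}). Since finite depth-two quasiorder frames have finite complex algebras of the right shape, and the $\mathbf{B_2}$ condition rules out any branching at the top level beyond a single cluster on each level, this reduction is routine. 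I expect this bookkeeping around completeness and the finite model property — rather than any genuinely hard argument — to be the main obstacle, and it is dispatched entirely by appeal to the literature already cited.
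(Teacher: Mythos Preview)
Your proposal is correct and follows essentially the same route as the paper: both inclusions hinge on Theorem~\ref{thm:maxid} together with the known frame characterisation of $\mathbf{S4.3B_2}$ cited from Fine and Bull--Segerberg, and your alternative via local finiteness is exactly the paper's own argument.

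One remark should be dropped, however: passing to $T_0$-quotients \emph{does} change the logic (the logic of antisymmetric chains of length at most two is the strictly stronger $\mathbf{S4.3MDumB_2}$), so that sentence is wrong. Fortunately it is also unnecessary: Theorem~\ref{thm:maxid}(2) already shows that the complex algebra of \emph{any} chain of at most two clusters---finite or not---is a MaxId algebra, so no reduction to finite frames is needed for the inclusion $\Eq(\mathbf{S4.3B_2}) \subseteq \Eq(\mmoa)$.
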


If $\klam{B,f}$ is a MaxId--algebra we shall indicate this by writing $f^{\text{uu}}$ for $f$, where the superscript uu indicates that the restriction of the canonical relation $R_f$ to a level is the universal relation. We note in passing that for  $a \in B$, $a \not\in \set{0,1}$, and the closure operators $f^{\text{iu}}, f^{\text{ui}}$ and $f^{\text{uu}}$ associated with the principal ideals and filters given by $a$, we have $f^{\text{iu}}(x) + f^{\text{ui}}(x) = f^{\text{uu}}(x)$, and $R_{f^{\text{iu}}} \cup R_{f^{\text{ui}}} = R_{f^{\text{uu}}}$.

Thus far we have considered algebras whose associated frames have depth at most two and in which at least one level contains a proper cluster. We will now consider the remaining case where each level contains only simple clusters. In accordance with our previous procedure, we shall approach this from an algebraic viewpoint first. Let $b \in B$ and let $f$ be defined as follows:

\begin{gather}\label{ii}
f(x) \df
\begin{cases}
x, &\text{if } x \leq b, \\
b+x, &\text{if } x \not\leq b.
\end{cases}
\end{gather}

Clearly, $f$ is a closure operator, and $B^{\text{c}} = \da{b} \cup \ua{b}$. Note that $f$ is the identity \tiff $b \in \set{0,1}$. The class of algebras $\klam{B,f}$ that satisfy \eqref{ii} is denoted by \gmoa. Clearly, \gmoa\ is first order axiomatizable.

\begin{theorem}
$\Eq(\gmoa)$ is locally finite.
\end{theorem}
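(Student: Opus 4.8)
The plan is to dualise and apply Blok's criterion, Theorem~\ref{thm:locfin}. Because closure and interior algebras are definitionally equivalent, $\Eq(\gmoa)$ is locally finite \tiff the variety $\mathsf{V}$ of interior algebras dual to the members of $\Eq(\gmoa)$ is locally finite, and by Theorem~\ref{thm:locfin} it is enough to produce some $n$ with $\mathsf{V}\subseteq(\mathsf{I}:K_n)$, that is, with $K_n$ embedding into no member of $\mathsf{V}$. I claim $n=3$ works; the reason is structural: every member of $\gmoa$, hence of $\Eq(\gmoa)$, has depth at most two, while $K_3$ does not.

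First I would check that each $\klam{B,f}\in\gmoa$ satisfies the depth-two axiom $\mathrm{B}_{\ref{b2}}$. Fix $x,y\in B$ and put $p\df f^\partial(y)$. Since $f^\partial$ is an interior operator, $p\leq y$, so $p\cdot -y=0$. Reading off \eqref{ii}: if $p\leq b$ then $f(p)\cdot -y=p\cdot -y=0$, while if $p\not\leq b$ then $f(p)=b+p$ and so $f(p)\cdot -y=b\cdot -y$; in either case $f(p)\cdot -y\leq b$. Hence $z\df f^\partial(x)\cdot f(f^\partial(y))\cdot -y\leq b$, and \eqref{ii} gives $f(z)=z\leq f^\partial(x)\leq x$, which is precisely $\mathrm{B}_{\ref{b2}}$. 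As $\mathrm{B}_{\ref{b2}}$ is an equation it holds throughout the variety $\Eq(\gmoa)$ generated by $\gmoa$, so every member of $\Eq(\gmoa)$, and dually every member of $\mathsf{V}$, has depth at most two.

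Next I would observe that $K_3$ does not have depth at most two. Up to the closure/interior duality, $K_3$ is the complex algebra $\Cm(\F)$ of a frame $\F$ which is a chain of three clusters; since $\F$ is such a chain it has depth three and, as one checks, it falsifies the frame condition $(x\mathrel{R}y\tand y\mathrel{R}z)\Implies(y\mathrel{R}x\tor z\mathrel{R}y)$ recorded in Table~\ref{tab:logcond} (take $x,y,z$ from the three clusters, in order), so $\Cm(\F)$ does not satisfy $\mathrm{B}_{\ref{b2}}$. Thus $K_3$ belongs to no variety of interior algebras of depth at most two; in particular, since $\mathsf{V}$ is such a variety and is closed under subalgebras, $K_3$ embeds into no member of $\mathsf{V}$. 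Hence $\mathsf{V}\subseteq(\mathsf{I}:K_3)$, and Theorem~\ref{thm:locfin} yields that $\mathsf{V}$, and therefore $\Eq(\gmoa)$, is locally finite.

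The substantive step is the verification of $\mathrm{B}_{\ref{b2}}$ for the algebras in $\gmoa$; everything after that is bookkeeping about the duality together with a direct appeal to Theorem~\ref{thm:locfin}. It is worth noting that the short counting argument used for $\fmoa$ and $\mmoa$ does not adapt here, since the set of open elements of a member of $\gmoa$ has the form $\da{-b}\cup\ua{-b}$, which can contain arbitrarily long chains; it is the depth-two constraint, not a bound on open elements, that forces the $K_3$-bound. The one point needing care is the passage between the closure-algebra axiom $\mathrm{B}_{\ref{b2}}$ and its interior-algebra form, and, in parallel, the reading off of ``depth at most two'' for $K_3$ from its dual frame; both are routine given Theorem~\ref{thm:framealg} and Table~\ref{tab:logcond}, but should be spelled out.
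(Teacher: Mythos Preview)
Your proof is correct but takes a genuinely different route from the paper's. The paper simply says the argument is ``analogous to the proof of Theorem~\ref{thm:flf}, using $K_4$ instead of $K_3$'': that is, one works directly with the shape $\da{c}\cup\ua{c}$ of the open elements of a dual $\gmoa$-algebra and derives a contradiction from a putative embedding of $K_4$, in the same combinatorial style as the filter-algebra case. You instead verify the depth-two identity $\mathrm{B}_{\ref{b2}}$ for every $\klam{B,f}\in\gmoa$ (your computation is clean and correct), pass to $\Eq(\gmoa)$ by equationality, and then exclude $K_3$ because its dual closure algebra is the complex algebra of a three-element chain, which visibly violates the $\mathrm{B}_2$ frame condition. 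This yields the sharper containment $\mathsf{V}\subseteq(\mathsf{I}:K_3)$ rather than the paper's $(\mathsf{I}:K_4)$, and it has the advantage of isolating the single equational obstruction (depth two) that does all the work. The paper's approach, by contrast, avoids computing $\mathrm{B}_{\ref{b2}}$ and stays entirely within the open-element combinatorics already set up for \fmoa.

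One small correction to your closing commentary: your claim that the $\fmoa$-style argument ``does not adapt here'' is not accurate. The open elements $\da{-b}\cup\ua{-b}$ can indeed contain long chains, so the naive three-element count used for $\mmoa$ fails; but the structural argument from Theorem~\ref{thm:flf} \emph{does} adapt once one tracks where the nontrivial open elements of $K_n$ land relative to $c=-b$, and this is precisely what the paper invokes (with $n=4$). This does not affect the validity of your proof, only the accuracy of that aside.
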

\begin{proof}
This is analogous to the proof of Theorem~\ref{thm:flf}, using $K_4$ instead of $K_3$.
\end{proof}
Unlike the classes we have considered previously, \gmoa\ is not closed under subalgebras, i.e.\ it is not a universal class:%
\footnote{Our original claim was that \gmoa\ is universal, however, the reviewer pointed out a hole in the proof which could not be fixed, and which led to Example \ref{ex:ii} and subsequent results.}
\begin{example}\label{ex:ii}
Suppose that $B$ is an atomless Boolean algebra, $b \in B \setminus \set{0,1}$, and let $f$ be determined by $b$. Choose some non-principal ideal $I \subsetneq \da{b}$, and set $A \df I \cup -I$; then, $A$ is a Boolean subalgebra of $B$ \cite[Lemma 5.33]{kop89}, and $I$ is a prime ideal of $A$. Let $g \df f \restrict A$; we show that $g[A] \subseteq A$. If $x \in I$, then $x \leq b$, and therefore $g(x) = f(x) = x \in I$. If $x \in -I$, then $-b \leq x$, and $g(x) = f(x) = b + x = 1$. Thus, $\klam{A, g}$ is a subalgebra of $\klam{B,f}$ but it is not in \gmoa, since $I$ is non-principal.
\end{example}
Note that $\klam{A,g}$ above is an ideal algebra. The following observation shows that the form of $A$ in the previous example is no accident:
\begin{theorem}\label{thm:sfin}
Let $\klam{B,f} \in \gmoa$ with respect to $b$ and let $\klam{A,g}$ be a subalgebra of $\klam{B,f}$. Then, $b \in A$ and $\klam{A,g} \in \gmoa$, or there is a prime ideal $J$ of $A$ such that $J \subseteq \da{b}$, $A = J \cup -J$ and $\klam{A,J} \in \ima$.
\end{theorem}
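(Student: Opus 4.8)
The plan is to read the conclusion off the set of closed elements of the subalgebra. First I would recall from~\eqref{ii} that the closed elements of $\klam{B,f}$ are exactly those comparable with $b$, so $\Bc = \da{b}\cup\ua{b}$, with $f(x)=x$ for $x\leq b$ and $f(x)=b+x$ for $x\not\leq b$; since $g=f\restrict A$, the same formulas compute $g$ on $A$. The easy case is $b\in A$: then $b+x\in A$ for every $x\in A$, so the defining clause~\eqref{ii} holds verbatim in $\klam{A,g}$ with the same parameter $b$, and hence $\klam{A,g}\in\gmoa$.

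For the case $b\notin A$, the crux is to establish the dichotomy that for every $x\in A$ either $x\leq b$ or $-x\leq b$. I would argue by contradiction: $g(x)$ and $g(-x)$ both lie in $A$, and if neither $x$ nor $-x$ is $\leq b$, then $g(x)=b+x$ and $g(-x)=b+(-x)$, whence $(b+x)\cdot(b+(-x))=b\in A$, contradicting $b\notin A$. I would also note that $x\leq b$ and $-x\leq b$ simultaneously would force $1=x+(-x)\leq b$, i.e.\ $b=1\in A$, again impossible. Given the dichotomy, I would set $J\df A\cap\da{b}=\set{x\in A:x\leq b}$; this is a proper ideal of $A$ contained in $\da{b}$ (proper because $1\in J$ would give $b=1\in A$), and the dichotomy says precisely that $x\in J$ or $-x\in J$ for all $x\in A$, which in a Boolean algebra makes $J$ a prime (equivalently, maximal) ideal and yields $A=J\cup -J$.

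Finally I would check that $g$ coincides with the ideal operator of $J$: for $x\in J$ we have $x\leq b$, so $g(x)=f(x)=x$; for $x\in A\setminus J$ we have $x\not\leq b$, so $g(x)=f(x)=b+x$, and since then $-x\in J$, i.e.\ $-x\leq b$, we get $-b\leq x$ and hence $b+x=1$. Thus $\klam{A,g}$ is the ideal algebra with defining ideal $J$, i.e.\ $\klam{A,g}\in\ima$, as required. I expect the only real obstacle to be spotting the dichotomy and verifying that both of its exceptional configurations force $b\in A$; everything else is a direct computation with the explicit form of $f$ together with the standard fact that an ideal of a Boolean algebra meeting each complementary pair is prime.
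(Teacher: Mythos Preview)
Your argument is correct and follows essentially the same route as the paper: both hinge on the computation $g(x)\cdot g(-x)=(b+x)\cdot(b+(-x))=b$ to force the dichotomy when $b\notin A$, then set $J=\set{x\in A:x\leq b}$ and verify that $g$ is the ideal operator of $J$ via $-x\leq b\Rightarrow b+x=1$. Your version adds a few harmless extra observations (exclusivity of the dichotomy, explicit properness of $J$) that the paper leaves implicit.
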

\begin{proof}
If $b \in A$, then clearly $\klam{A,g} \in \gmoa$. Thus, suppose that $b \not\in A$. Assume that there is some $x \in A$ such that $x \not\leq b$ and $-x \not\leq b$. Then,
\begin{gather*}
g(x) \cdot g(-x) = f(x) \cdot f(-x) = (b+x) \cdot (b + -x) = b,
\end{gather*}
contradicting $b \not\in A$. Thus, $x \lneq b$ or $-x \lneq b$ for all $x \in A$. Set $J \df \set{x \in A: x \leq b}$; then, $J$ is a closed prime ideal of $A$ since $g\restrict J$ is the identity, and $A = J \cup -J$. Finally, if $x \in A \setminus J$, then $x \not\leq b$, and therefore, $-x \leq b$, and $g(x) = f(x) = x + b \geq x + (-x) = 1$. Hence, $\klam{A,J} \in \ima$.
\end{proof}
%
%
\begin{corollary}
Let $\klam{B,f} \in \gmoa$ and let $\klam{A,g}$ be a finite subalgebra of $\klam{B,f}$. Then, $\klam{A,g} \in \gmoa$.
\end{corollary}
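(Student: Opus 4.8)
The plan is to invoke Theorem~\ref{thm:sfin} directly and rule out its second alternative on cardinality grounds. Given a finite subalgebra $\klam{A,g}$ of $\klam{B,f} \in \gmoa$ with respect to $b$, Theorem~\ref{thm:sfin} offers a dichotomy: either $b \in A$, in which case $\klam{A,g} \in \gmoa$ and we are done, or there is a prime ideal $J$ of $A$ with $J \subseteq \da{b}$, $A = J \cup -J$, and $\klam{A,J} \in \ima$. So the whole content of the corollary is to eliminate the second alternative when $A$ is finite.

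First I would observe that in a \emph{finite} Boolean algebra every prime ideal is principal: if $J$ is a prime (equivalently, maximal, since $A$ is finite) ideal of $A$, then $J = \da{c}$ for the element $c$ which is the join of all atoms of $A$ not in the complementary ultrafilter — concretely, $c$ is the largest element of $J$, which exists because $A$ is finite and $J$ is closed under finite joins. Hence $J = \da{c}$ with $c = -p$ for the atom $p$ generating the ultrafilter $-J = \ua{p}$. Now I would use the constraint $J \subseteq \da{b}$: this gives $c \leq b$. On the other hand, $A = J \cup -J = \da{c} \cup \ua{c}$, and since $b \in B^{\text{c}} = \da{b}\cup\ua{b}$ — wait, more usefully, I would directly compute $g$ on $A$ and compare with the required form of $f$: for $x \in A$ with $x \not\leq c$ we have $-x \leq c \leq b$, so $x \not\leq b$ as well (since $x \geq p$ and $p \not\leq b$ would be needed; indeed $p = -c \geq -b$, so $p \not\leq b$, whence $x \not\leq b$), and then $g(x) = f(x) = b + x$. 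But $x \in A$ and $b + x$ must lie in $A$ for $g$ to map $A$ into $A$; since $x \not\leq c$ means $x = 1$ or $x$ is the join of $p$ with some subset of atoms below $c$... actually the cleanest route: $g(x) = b + x$ for every $x \not\leq c$, and taking $x = p$ (the atom, which satisfies $p \not\leq c$) we get $g(p) = b + p \in A$. Since $A = \da{c} \cup \ua{c}$ and $b + p \geq p$, we need $b + p \in \ua{c}$, i.e.\ $c \leq b + p$; combined with $c \leq b$ this is automatic, so no contradiction there. The real contradiction: the original Theorem~\ref{thm:sfin} proof shows $g(x) = x + b = 1$ for $x \in A \setminus J$, in particular $g(p) = 1$, so $b + p = 1$, i.e.\ $-p \leq b$, i.e.\ $c \leq b$, again consistent. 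So the second alternative is \emph{not} contradictory merely from finiteness — I need $b \in A$.

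The actual point, then, is that when $A$ is finite, $b$ \emph{is forced to be in $A$}. From $c \leq b$ and $g(p) = b + p = 1$ we get $-p \leq b$, i.e.\ $c \leq b$. And from $J = \da{c} \subseteq \da{b}$ being as large as possible: any $y \in A$ with $y \leq b$ lies in $J = \da{c}$, so $y \leq c$; in particular if $b \in A$ then $b \leq c$, giving $b = c \in A$. But if $b \not\in A$, consider: is $c = b$? We have $c \leq b$ and $-p \leq b$ where $-p = c$. Hmm, so $c \leq b \leq$ ? We have $b \cdot p$: either $b \geq p$ or not. If $b \geq p$, then $b \geq p$ and $b \geq c = -p$ give $b = 1$, contradicting $b \notin A$ (as $1 \in A$). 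So $b \not\geq p$, meaning $b \cdot p \lneq p$, i.e.\ $b \leq -p = c$ (since $p$ is an atom, $b \cdot p \in \{0,p\}$, and $b\cdot p \neq p$ forces $b \cdot p = 0$, so $b \leq -p = c$). Together with $c \leq b$ this yields $b = c \in A$, contradicting $b \not\in A$. Hence the second alternative is impossible, $b \in A$, and $\klam{A,g}\in\gmoa$. The main obstacle is just bookkeeping the atom argument; I would present it compactly by noting that in a finite BA a prime ideal $J$ is of the form $\da{-p}$ for an atom $p$, then showing $b\cdot p \in \{0,p\}$ forces $b\leq -p$, while $\klam{A,J}\in\ima$ with $J\subseteq\da b$ forces $-p\leq b$, so $b=-p\in A$.
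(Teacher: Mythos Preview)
Your argument contains a genuine error: the claim that $b$ must lie in $A$ is false. The step that fails is ``since $p$ is an atom, $b \cdot p \in \{0,p\}$''. Here $p$ is an atom of the subalgebra $A$, not of $B$; since $b \not\in A$, the product $b \cdot p$ is computed in $B$, where $p$ need not be an atom, and so $b\cdot p$ may be strictly between $0$ and $p$. Concretely, take $B = 2^4$ with atoms $e_1,e_2,e_3,e_4$, let $b = e_1+e_2+e_3$, and let $A = \{0,\,e_1,\,-e_1,\,1\}$. One checks that $A$ is closed under $f$ (with $g(e_1)=e_1$ and $g(-e_1)=1$), that $J = \{0,e_1\}$, $c = e_1$, $p = -e_1 = e_2+e_3+e_4$, and $b \cdot p = e_2+e_3 \notin \{0,p\}$. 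Here $b \not\in A$, yet $\klam{A,g} \in \gmoa$ with respect to $e_1$.

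The paper's proof takes a different route that avoids this trap entirely: it does \emph{not} try to force $b$ into $A$. Instead, assuming $b \not\in A$, it lets $a \df \max J$ (which exists because $A$ is finite) and verifies directly that $\klam{A,g} \in \gmoa$ with respect to this new element $a$. For $x \leq a$ one has $x \leq b$ and hence $g(x)=x$; for $x \not\leq a$ one has $x \not\in J$, so $-x \in J$ (primeness) gives $x+a=1$, while $x \not\leq b$ gives $g(x)=x+b=1=x+a$. Thus the second alternative of Theorem~\ref{thm:sfin} is not eliminated but absorbed: a finite ideal algebra of this special shape is already a \gmoa-algebra for the generator of its ideal. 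Your dichotomy framing is fine, but the resolution is to recognise the overlap of the two alternatives in the finite case, not to rule one out.
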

\begin{proof}
If $b \not\in A$, choose $a \df \max~J$ in the proof above which exists and belongs to $J$ since $A$ is finite; then, $\klam{A,g} \in \gmoa$ with respect to $a$. Indeed, if $x \leq a$, then $x \leq b$, since $a \in J$, and therefore, $g(x) = f(x) = x$. Otherwise, $x \not\in J$ implies $-x \in J$, since is prime, and therefore, $-x \leq a$; hence $x + a = 1$. Furthermore, $x \not\in J$ implies $x \not\leq b$, and therefore, $g(x) = f(x) = x + b = 1 = x+a$.
\end{proof}

\begin{theorem}\label{thm:gmoapu}
\gmoa\ is a positive class.
\end{theorem}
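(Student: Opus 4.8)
The plan is to show that \gmoa\ is closed under homomorphic images. Since \gmoa\ has already been observed to be first-order axiomatizable, hence elementary, Lyndon's homomorphism preservation theorem (see e.g.~\cite[Chapter 5]{ck71}) then yields that \gmoa\ is axiomatizable by positive sentences, i.e.\ that it is a positive class. Closure of \gmoa\ under ultraproducts is automatic from elementarity, and the trivial algebra $\1 \in \gmoa$ is a homomorphic image of any member (quotient by the ideal $B$); so the entire burden is to show that a nontrivial proper quotient of a \gmoa-algebra is again in \gmoa.

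So I would fix $\klam{B,f} \in \gmoa$ with determining element $b$, let $I$ be a congruence ideal of $\klam{B,f}$, write $\pi: B \onto B/I$ for the quotient map and $\bar f$ for the induced operator, so $\bar f(\pi(x)) = \pi(f(x))$. Because closure algebras form a variety, $\klam{B/I,\bar f}$ is again a closure algebra, and by Theorem~\ref{thm:bdcl} it is now enough to compute its set $(B/I)^{\text{c}}$ of closed elements and check that it equals $\da{\pi(b)} \cup \ua{\pi(b)}$: for then Theorem~\ref{thm:bdcl} forces $\bar f(z) = \min\bigl(\ua z \cap (\da{\pi(b)}\cup\ua{\pi(b)})\bigr)$, which is $z$ when $z \leq \pi(b)$ and $\pi(b)+z$ otherwise, that is, exactly \eqref{ii} with $c \df \pi(b)$.

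The heart of the argument is this computation of $(B/I)^{\text{c}}$. For ``$\supseteq$'': if $\pi(y) \leq \pi(b)$, then $y\cdot b$ is another representative of $\pi(y)$ lying below $b$, hence closed in $B$, so $\bar f(\pi(y)) = \pi(f(y\cdot b)) = \pi(y\cdot b) = \pi(y)$; symmetrically, if $\pi(y) \geq \pi(b)$ then $y+b$ represents $\pi(y)$ and is closed in $B$ (if $y+b \leq b$ then $y+b = b$ is closed, and otherwise $f(y+b) = b+(y+b) = y+b$), so $\pi(y)$ is closed. For ``$\subseteq$'': if $\pi(y)$ is closed, I use the dichotomy in $B$: if $y \leq b$ then $\pi(y)\leq\pi(b)$; and if $y\not\leq b$ then $f(y) = b+y$, so $\pi(b)+\pi(y) = \pi(f(y)) = \pi(y)$, i.e.\ $\pi(b) \leq \pi(y)$. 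Hence $(B/I)^{\text{c}} = \da{\pi(b)}\cup\ua{\pi(b)}$ and $\klam{B/I,\bar f} \in \gmoa$ with respect to $\pi(b)$.

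I expect the only genuinely delicate point to be the ``$\subseteq$'' inclusion: one must be sure that a closed element of the quotient --- although it may have many, wildly different preimages --- can always be routed through the two-valued case split ``$y \leq b$ or $y \not\leq b$'' in $B$, so that $(B/I)^{\text{c}}$ does not properly exceed $\da{\pi(b)}\cup\ua{\pi(b)}$. This is exactly the phenomenon that fails for direct products (an element below $b$ in one coordinate and above $b$ in another is closed in the product but lies in neither $\da{(b_i)_i}$ nor $\ua{(b_i)_i}$), which is the structural reason why, as Example~\ref{ex:ii} already foreshadows, \gmoa\ is only a positive class and not a positive universal one. Everything else is routine Boolean arithmetic together with Theorem~\ref{thm:bdcl} and the preservation theorem.
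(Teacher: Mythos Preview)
Your proof is correct. Both your argument and the paper's establish closure of \gmoa\ under homomorphic images; the paper does this by a case split on whether the determining element $b$ lies in the congruence ideal $I$ (if $b\in I$ it verifies directly that the quotient operator is the identity, and if $b\notin I$ it first observes $I\subseteq\da b$ and then checks \eqref{ii} for $\pi(b)$ by elementwise computation), whereas you proceed uniformly by computing $(B/I)^{\text c}$ and then invoking Theorem~\ref{thm:bdcl} to read off the operator. Your route absorbs the case $b\in I$ as the degenerate instance $\pi(b)=0$ and is arguably cleaner, at the cost of routing through the closed-element description and Theorem~\ref{thm:bdcl}; the paper's route is more hands-on but self-contained. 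One small inaccuracy in your closing commentary: the reason \gmoa\ fails to be \emph{positive universal} is failure of closure under subalgebras (Example~\ref{ex:ii}), not under products; your product observation is correct in itself but explains why \gmoa\ is not a variety, which is a different matter.
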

\begin{proof}
We show that \gmoa\ is closed under homomorphic images. Let $\klam{B,f} \in \gmoa$ with respect to $b$, $I$ be a congruence ideal of $B$, and suppose \wlg that $I$ is nontrivial and proper. Let $A \df B/I$, $\pi: B \onto A$ be the canonical homomorphism, and $g(\pi(x)) \df \pi(f(x))$. We suppose \wlg that $\card{A} \geq 4$. There are two cases:
\begin{enumerate}
\item $b \in I$: Our aim is to show $g$ is the identity, i.e. that $g(\pi(x)) = \pi(x)$ for all $x \in B$. Let $x \in B$. If $x \in I$, then $\pi(x) = 0$ and there is nothing more to show. Suppose that $x \not\in I$; then, in particular, $x \not\leq b$, since $b \in I$ and $I$ is an ideal. Now,
\begin{xalignat*}{2}
y \in g(\pi(x)) &\Iff y \in \pi(f(x)), \\
&\Iff (\exists z \in I)[y + z = f(x) + z], \\
&\Iff (\exists z \in I)[y + z = b + x + z], && \text{since } x \not\leq b, \\
&\Iff (\exists z' \in I)[y + z' = x + z'], && \text{set }z' \df z + b \in I,\\
&\Iff y \in \pi(x).
\end{xalignat*}
\item $b \not\in I$: Then, $I \subsetneq \da{b}$ since $I$ is closed, and $B^{\text{c}} = \da{b} \cup \ua{b}$; furthermore, $\pi(b) \neq 0$,  Suppose that $\pi(x) \leq \pi(b)$; then, there is some $y \in I$ such that $x + y \leq b +y$. Since $I \subseteq \da{b}$, it follows that $y \leq b$, and therefore, $x \leq b$ which implies $f(x) = x$; hence, $g(\pi(x)) = \pi(f(x)) = \pi(x)$. If $\pi(x) \not\leq \pi(b)$, then, in particular, $x \not\leq b$ which implies $f(x) = b + x$. It follows that $g(\pi(x)) = \pi(f(x)) = \pi(b) + \pi(x)$.
\end{enumerate}
This completes the proof.
\end{proof}
To describe the subdirectly irreducibles in $\Eq(\gmoa)$, we first describe those in \gmoa.
\begin{lemma}\label{thm:si-gmoa}
Suppose that $\klam{B,f} \in \gmoa$ with respect to $b$. Then, $\klam{B,f}$ is subdirectly irreducible \tiff $\card{B} = 2$ or $b$ is an atom of $B$.
\end{lemma}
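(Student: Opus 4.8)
The plan is to use the characterization from Lemma~\ref{lem:si}(1): a closure algebra is subdirectly irreducible if and only if it has a smallest nontrivial closed ideal. Recall that in a \gmoa-algebra $\klam{B,f}$ with respect to $b$, the closed elements are exactly $\da{b} \cup \ua{b}$, and the closed ideals are precisely the congruence ideals.

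First I would handle the easy direction. If $\card{B} = 2$ then $\klam{B,f}$ is simple, hence subdirectly irreducible. If $b$ is an atom of $B$, then I claim $\da{b} = \set{0,b}$ is the smallest nontrivial closed ideal: it is closed since $f$ restricted to $\da{b}$ is the identity, it is nontrivial since $b \neq 0$, and any nontrivial closed ideal $I$ contains some $x \neq 0$ with $f(x) = x$, i.e. $x \in \da{b} \cup \ua{b}$; if $x \leq b$ then since $b$ is an atom $x = b$, so $b \in I$; if $x \geq b$ and $x \neq 0$ then... here I need to be careful, because $x \geq b$ does not immediately put $b$ in the ideal. Actually the right argument: if $I$ is a nontrivial closed ideal and $b \notin I$, pick $0 \neq x \in I$; then $f(x) \in I$ and $f(x)$ is closed, and $f(x) \geq x > 0$. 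If $f(x) \leq b$ then $f(x) = b$ (atom), contradicting $b \notin I$. So $f(x) \not\leq b$, meaning $x \not\leq b$ as well (else $f(x) = x \leq b$); but then $f(x) = b + x \in I$, and since $x \not\le b$ we have $b + x > x$... still not obviously giving $b \in I$. The cleaner route: since $x \not\le b$, $x \cdot -b \neq 0$, and $x \cdot -b \le x \in I$, so $x \cdot -b \in I$; then $f(x \cdot -b)$ is closed and lies in $I$; since $x\cdot -b \not\le b$ (as $x \cdot -b \le -b$ and $-b \ne 0$ because $b$ is an atom of a $\ge 4$-element algebra), $f(x\cdot -b) = b + (x \cdot -b) \in I$, whence $b = b \cdot (b + x\cdot -b) \in I$ — contradiction. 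So $\da{b}$ is the smallest nontrivial closed ideal.

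For the converse, suppose $\card{B} \geq 4$ (so $\card{B} \geq 4$ since Boolean) and $b$ is not an atom; I must produce two incomparable nontrivial closed ideals, or show no smallest one exists. If $b = 0$ or $b = 1$, then $f$ is the identity and \emph{every} ideal is closed; since $\card{B} \geq 4$ there are two incomparable nontrivial principal ideals generated by distinct atoms (or, in the atomless case, a descending chain with no smallest nontrivial closed ideal — one must handle this), so $\klam{B,f}$ is not subdirectly irreducible. If $0 \lneq b \lneq 1$ and $b$ is not an atom, pick $0 \lneq c \lneq b$; then $\da{c} \subseteq \da{b}$ so $f\restrict \da{c}$ is the identity, hence $\da{c}$ is a nontrivial closed ideal strictly smaller than $\da{b}$, and iterating (or directly: pick two elements $0 \lneq c, c' \le b$ with $c \cdot c' = 0$ if $b$ has two elements below it, or use a strictly descending chain) shows there is no smallest nontrivial closed ideal. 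The main obstacle will be the atomless / non-principal subtleties: when $b$ is not an atom one may only get an infinite strictly descending chain of closed ideals rather than two incomparable ones, but that still suffices to deny a smallest nontrivial closed ideal, so I would phrase the argument as: \emph{if} $b$ is not an atom, then $\da{b}$ contains a nonzero element $c \lneq b$, $\da{c}$ is closed and properly contained in $\da{b}$, and any nontrivial closed ideal contained in $\da c$ again has this property — so no smallest one exists — while if $b \in \set{0,1}$ the identity operator makes all ideals closed and the same descending-chain argument applies. I expect the bookkeeping around which small closed ideals exist in the non-atomic case to be where care is needed; everything else is a direct application of Lemma~\ref{lem:si}(1).
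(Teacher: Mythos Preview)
Your overall strategy---using Lemma~\ref{lem:si}(1)---is the same as the paper's, and your ``$\Leftarrow$'' direction is correct, though more laboured than necessary. The paper dispatches it in one line: any nontrivial closed ideal $I$ contains a nonzero closed element (take $f(x)$ for any $0\neq x\in I$); since the closed elements are $\da{b}\cup\ua{b}$ and $b$ is an atom, that element is either $b$ itself or some $c\geq b$, and in either case $b\in I$.

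Your ``$\Rightarrow$'' direction, however, has a genuine gap. The ``iterating'' / descending-chain argument does \emph{not} show that no smallest nontrivial closed ideal exists: if $B$ has an atom $d\leq b$, then $\da{d}$ is a nontrivial closed ideal with nothing strictly smaller inside it, so your claim that ``any nontrivial closed ideal contained in $\da{c}$ again has this property'' is false. Even in the atomless case a strictly descending chain need not have trivial intersection, so it does not by itself rule out a smallest ideal. The remedy is exactly the disjoint-ideals argument you mention parenthetically but then hedge with ``if $b$ has two elements below it''---that qualifier is never needed. Whenever $0\lneq b$ and $b$ is not an atom, pick $0\lneq x\lneq b$ and set $y\df b\cdot(-x)$; then $y\neq 0$ (else $b\leq x$), $x\cdot y=0$, and $\da{x},\da{y}$ are nontrivial closed ideals with $\da{x}\cap\da{y}=\{0\}$, so no smallest nontrivial closed ideal can exist. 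This is precisely how the paper argues; commit to it rather than to the chain argument. The case $b\in\{0,1\}$ (so $f$ is the identity and $\card{B}\geq 4$) is handled the same way: take any $0\lneq x\lneq 1$ and use $\da{x}$, $\da{-x}$.
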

\begin{proof}
\aright Suppose that $\card{B} \geq 4$. If $b \in \set{0,1}$, then $f = 1'$, and  $\klam{B,f}$ is not subdirectly irreducible. Hence, $0 \lneq b \lneq 1$, and, $\da{b}$ is a nontrivial closed ideal. If $b$ is not an atom, then there are $0 \lneq x,y \lneq b$ such that $x \cdot y = 0$. Then, $\da{x}$ and $\da{y}$ are nontrivial closed ideals of $B$ strictly contained in $\da{b}$. On the other hand, $\da{x} \cap \da{y} = \set{0}$, and therefore, $B$ is not subdirectly irreducible.

\aleft If $\card{B} = 2$, then  $\klam{B,f}$ is simple, hence, subdirectly irreducible. If $b$ is an atom of $B$, then $\da{b}$ is the smallest nontrivial closed ideal of $B$, since $f(x) = x$ \tiff $x \leq b$ or $b \leq x$.
\end{proof}
\begin{theorem}\label{cor:sigmoa}
If $\klam{B,f} \in \Eq(\gmoa)$ is subdirectly irreducible, then $\klam{B,f} \in \gmoa$.
\end{theorem}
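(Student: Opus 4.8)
The plan is to reduce, via Jónsson's Lemma, to the subalgebras of \gmoa-algebras described in Theorem~\ref{thm:sfin}, and then to exploit subdirect irreducibility to pin down the degenerate case. Since \gmoa\ is first order axiomatizable it is closed under ultraproducts, so Lemma~\ref{lem:jon} gives $\Si\Eq(\gmoa) \subseteq \hom\sub(\gmoa)$. Hence, given a subdirectly irreducible $\klam{B,f} \in \Eq(\gmoa)$, I would fix some $\klam{A,g} \in \sub(\gmoa)$ together with a surjective homomorphism $\pi\colon \klam{A,g} \onto \klam{B,f}$, and apply Theorem~\ref{thm:sfin} to $\klam{A,g}$.

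If $\klam{A,g} \in \gmoa$, then $\klam{B,f} \in \hom(\gmoa) = \gmoa$ by Theorem~\ref{thm:gmoapu}, and there is nothing more to do. The substantive case is the other alternative of Theorem~\ref{thm:sfin}: $\klam{A,g}$ is an ideal algebra whose defining ideal $J$ is prime, hence a maximal ideal of $A$. Since \imoa\ is a positive universal class (Theorem~\ref{thm:idalg}(1)), $\klam{B,f}$ is again an ideal algebra. Here I would record the routine observation that $\ker\pi$ is a closed ideal of $A$ and therefore contained in $J$, so that the set of closed elements of $\klam{B,f}$ is the disjoint union $\pi[J] \cup \set 1$ with $\pi[J]$ a proper ideal; thus the defining ideal of $\klam{B,f}$ is $\pi[J]$, and it is maximal in $B$, being the image of a maximal ideal containing $\ker\pi$.

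To conclude, I would invoke Theorem~\ref{thm:idalg}(4): a subdirectly irreducible ideal algebra has defining ideal $\set 0$ or generated by an atom. Combined with maximality, the first option forces $\card B = 2$ and the second forces $\card B = 4$ with $B = \set{0,a,-a,1}$. In either case one checks directly that $f$ has the form~\eqref{ii}, with $b = 0$ in the first case and $b = a$ in the second, so $\klam{B,f} \in \gmoa$, as required.

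The main obstacle — and the reason the short ``positive universal class'' argument that settled the analogous statements for \imoa, \fmoa\ and \mmoa\ does not apply here — is that \gmoa\ is not closed under subalgebras (Example~\ref{ex:ii}): a subalgebra of a \gmoa-algebra may degenerate into an ideal algebra. The actual content of the proof is the verification that, after passing to a subdirectly irreducible homomorphic image, this degeneration only produces the two small algebras that already lie in \gmoa; the decisive point that makes this go through is that the ideal $J$ furnished by Theorem~\ref{thm:sfin} is \emph{prime}, which is exactly what excludes the larger ideal algebras (the larger discriminator algebras in particular).
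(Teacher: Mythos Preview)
Your argument is correct and follows the same overall strategy as the paper: reduce via J{\'o}nsson's Lemma, invoke Theorem~\ref{thm:sfin}, and use Theorem~\ref{thm:idalg}(4) together with primality to force $\card{B}\le 4$. The one organizational difference is that the paper first uses the identity $\hom\sub(\K)=\sub\hom(\K)$ (valid for modal algebras) together with Theorem~\ref{thm:gmoapu} to obtain $\hom\sub(\gmoa)=\sub(\gmoa)$; this places $\klam{B,f}$ itself in $\sub(\gmoa)$, so Theorem~\ref{thm:sfin} applies directly to $\klam{B,f}$ and no homomorphism needs to be tracked. Your route---applying Theorem~\ref{thm:sfin} to the intermediate $\klam{A,g}$ and then pushing the prime ideal forward through $\pi$---reaches the same conclusion but requires the extra bookkeeping about $\ker\pi\subseteq J$ and the maximality of $\pi[J]$, all of which you handle correctly.
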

\begin{proof}
 Suppose that $\klam{B,f} \in \Si\Eq(\gmoa)$. By J{\'o}nsson's Lemma \ref{lem:jon}, $\klam{B,f} \in \hom\sub(\gmoa)$. It is well known that $\hom\sub(\K) = \sub\hom(\K))$ for any class $\K$ of modal algebras  \cite{blok80a}, and therefore, $\hom\sub(\gmoa) = \sub(\gmoa))$ by Theorem \ref{thm:gmoapu}. Assume that  $\klam{B,f} \not\in \gmoa$. Then, $\card{B} \geq 4$, and there is some $\klam{B',f'} \in \gmoa$ such that $\klam{B,f}$ is a subalgebra of $\klam{B',f'}$. By Theorem \ref{thm:sfin}, there is some closed prime ideal $I$ of $B$ such that $B = I \cup -I$. Since $\klam{B,f}$ is subdirectly irreducible, $I$ must be generated by an atom by Theorem \ref{thm:idalg}, say, $b$. Then, $B$ is the four element Boolean algebra with $B^{\text{c}} = \set{0,b,1}$ since $B = I \cup -I$, and thus, $\klam{B,f} \in \gmoa$. This contradicts our assumption.
\end{proof}

Therefore, since $\Eq(\gmoa)$ is locally finite, it is generated by the finite subdirectly irreducible algebras in \gmoa.

Next we turn to the canonical frames of algebras in \gma.

\begin{theorem}\label{thm:iiframe}
\begin{enumerate}
\item Suppose that $\klam{B,f} \in \gmoa$ with respect to $a$, and that $a \not\in \set{0,1}$. Then $R_{f}$ has two levels $U,V$, and $R_{f} = (U \times V) \cup 1'$.
\item If $\klam{W,R}$ is a frame where $R$ has two levels $U,V$ and $R = (U \times V) \cup 1'$, then $\klam{R}$ satisfies \eqref{ii}.
\end{enumerate}
\end{theorem}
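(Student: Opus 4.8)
The plan is to prove both parts by a direct computation of the canonical relation, using the following standard reformulation, which I would record first: for any closure algebra $\klam{B,f}$ and any $F,G \in \ult(B)$ we have $F \mathrel{R_f} G$ \tiff every closed element of $G$ belongs to $F$, i.e. \tiff $G \cap \Bc \subseteq F$. (Indeed, if $c \in G$ is closed then $c = f(c) \in f[G] \subseteq F$; conversely, if $x \in G$ then $f(x) \geq x$ is closed, so $f(x) \in G \cap \Bc \subseteq F$, whence $f[G] \subseteq F$.) For part (1) let $\klam{B,f} \in \gmoa$ with respect to $a \notin \set{0,1}$, so that $\Bc = \da{a} \cup \ua{a}$, and set $U \df \set{F \in \ult(B): a \in F}$ and $V \df \set{F \in \ult(B): a \notin F}$; since $0 \neq a \neq 1$ both are nonempty, and they partition $\ult(B)$. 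A preliminary observation I would establish is that $a \in G$ \tiff $G$ contains a nonzero element below $a$: if $0 \lneq x \leq a$ and $x \in G$, then $-a \notin G$ (else $0 = x \cdot -a \in G$), so $a \in G$.

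Next I would run through the cases for $F \mathrel{R_f} G$. The case $F = G$ is immediate. If $F,G \in U$ and $F \neq G$, pick $y \in G \setminus F$; then $a \cdot y$ is a nonzero closed element of $G$ with $a \cdot y \notin F$ (otherwise $y \in F$), so $F \mathrel{R_f} G$ fails. If $F \in U$ and $G \in V$, then by the preliminary observation $G$ has no nonzero element below $a$, so every closed element of $G$ lies in $\ua{a}$, hence is $\geq a \in F$ and therefore lies in $F$; thus $F \mathrel{R_f} G$. If $F \in V$ and $G \in U$, then $a \in G$ is closed but $a \notin F$, so $F \mathrel{R_f} G$ fails. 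Finally, if $F,G \in V$ and $F \neq G$, pick $w \in G \setminus F$; then $a+w \geq a$ is a closed element of $G$ while $-(a+w) = -a \cdot -w \in F$, so $a+w \notin F$ and $F \mathrel{R_f} G$ fails. Collecting the cases gives $R_f = 1' \cup (U \times V)$; as $U,V \neq \z$ this is a quasiorder of depth exactly two in which every cluster is simple, with lower level $U$ and upper level $V$.

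For part (2) let $\F = \klam{W,R}$ with $W = U \cup V$ disjoint and $R = (U \times V) \cup 1'$. Here $R(x) = \set{x} \cup V$ for $x \in U$ and $R(x) = \set{x}$ for $x \in V$, so for $X \subseteq W$ the definition of $\poss{R}$ yields: for $x \in V$, $x \in \poss{R}(X)$ iff $x \in X$; and for $x \in U$, $x \in \poss{R}(X)$ iff $x \in X$ or $X \cap V \neq \z$. Hence $\poss{R}(X) = X$ when $X \subseteq U$ and $\poss{R}(X) = X \cup U$ when $X \not\subseteq U$, which is exactly \eqref{ii} for $\Cm(\F)$ with $b \df U$ (in the degenerate cases $U = \z$ or $V = \z$ one has $R = 1'$, and \eqref{ii} holds with $b = 0$). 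The computations are routine once the reformulation of $R_f$ is in hand; the one place to be careful is the ultrafilter bookkeeping in part (1) — in particular the equivalence ``$a \in G$ iff $G$ contains a nonzero element below $a$'' that separates $U$ from $V$ — together with keeping straight the direction of $\poss{R}$ in part (2).
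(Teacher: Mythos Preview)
Your proof is correct and follows essentially the same approach as the paper: both define $U = \set{F: a \in F}$ and $V = \set{F: a \notin F}$ and verify the four blocks of $R_f$ by direct ultrafilter computations, and both compute $\poss{R}$ on the complex algebra by a case split on whether $X \subseteq U$. The only cosmetic difference is that you first record the equivalent form $F \mathrel{R_f} G \Iff G \cap \Bc \subseteq F$ and then run through all four cases explicitly, whereas the paper works directly with $f[G] \subseteq F$ and compresses the $\subseteq$ direction into a single argument showing that $F \mathrel{R_f} G$ with $F \neq G$ forces $G \in V$ and $F \in U$.
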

\begin{proof}
1. Let $U \df \set{F \in \ult(B): a \in F}$, and $V \df \set{F \in \ult(B): a \not\in F}$, and $F,G \in \ult(B)$.

\sbe Let $F \mathrel{R_f} G$ and $F \neq G$; then, there is some $x \in G$, $x \not\in F$. Assume that $a \in G$. Since $G$ is a filter and $x \in G$, we have $x \cdot a \in G$. By \eqref{ii}, $f(x \cdot a) = x \cdot a$, and $F \mathrel{R_f} G$ implies that $x \cdot a \in F$. Since $F$ is a filter, we obtain $x \in F$, a contradiction. It follows that $G \in V$. Next, assume that $a \not\in F$; then, $x + a \not\in F$, since $x \not\in F$, and $F$ is an ultrafilter. However, $f[G] \subseteq F$ implies that $f(x+a) = x+a \in F$, a contradiction. It follows that $F \in U$, and altogether we have proved $R_f \subseteq (U \times V) \cup 1'$.

\spe We first show that $F \mathrel{R_f}F$, i.e. that $f[F] \subseteq F$. Let $x \in F$. Since $f$ is a closure operator, we have $x \leq f(x)$, which implies $f(x) \in F$. Next, let $F \in U$ and $G \in V$. Suppose that $x \in G$. Since $-a \in G$, we have $x \not\leq a$, and thus, $f(x) = a + x$. Now, $a \in F$ implies $a + x \in F$, and therefore, $F \mathrel{R_f} G$. It follows that $(U \times V) \cup 1' \subseteq R_f$.

2. Suppose that $R$ has two levels $U,V$ and $R = (U \times V) \cup 1'$. If $x \in U$, then $\conv{R}(x) = \set{x}$, and if $x \in V$, then $\conv{R}(x) = U \cup \set{x}$. Altogether, $\klam{R}(Y) = Y$, if $Y \subseteq U$, and $\klam{R}(Y) = U \cup Y$ otherwise.
\end{proof}
We call a relation with two distinct levels $U,V$ for which $R = (U \times V) \cup 1'$ an \emph{ii--relation}, indicated by $R^{\text{ii}}$,  and an operator which satisfies \eqref{ii} an \emph{ii--operator}. Example \ref{ex:ii} shows that the canonical extension -- i.e.\ the complex algebra of the ultrafilter extension -- of an ideal algebra \A\ can be in \gmoa, even though \A\ is not.

It follows from the previous results that $\Eq(\gmoa)$ is generated by the complex algebras of ii--relations where the lower level has exactly one element. It is well known that the logic generated by these frames is the pretabular logic $\mathbf {S4 Grz B_2}$ of~\cite[p. 53]{bs84}, see also~\cite{em77} and~\cite{maks75}. It is known that $\mathbf {S4 Grz B_2} = \mathbf {S4MDumB_2}$, see e.g.~\cite[p. 107]{seg71}; in the sequel we shall use $\mathbf {S4MDumB_2}$ instead of $\mathbf {S4 Grz B_2}$, since it demonstrates better the relationship among the logics we have considered. From the previous result we now obtain
\begin{theorem}
$\Eq(\gmoa) = \Eq(\mathbf {S4MDumB_2})$.
\end{theorem}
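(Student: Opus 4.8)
The plan is to follow the pattern of the proofs of Theorems~\ref{thm:logfmoa} and~\ref{thm:logmmoa}, now that the structural analysis of \gmoa\ is complete. The ingredients are already in hand: $\Eq(\gmoa)$ is locally finite, hence generated by its finite subdirectly irreducible members; by Theorem~\ref{cor:sigmoa} these belong to \gmoa; and by Lemma~\ref{thm:si-gmoa} they are, up to isomorphism, the two-element algebra together with the algebras $\klam{B,f}\in\gmoa$ whose parameter $b$ is an atom of $B$. For such an algebra, with atoms $a_0,\dots,a_n$ and say $b=a_0$, Theorem~\ref{thm:iiframe}\,(1) identifies the canonical frame as the ii--relation $R_f=(U\times V)\cup 1'$ in which $U=\set{F\in\ult(B):b\in F}$ is the single principal ultrafilter $F_{a_0}$ and $V=\set{F_{a_1},\dots,F_{a_n}}$; since a finite modal algebra is isomorphic to the complex algebra of its canonical frame (Theorem~\ref{thm:framealg}, cf.~\eqref{RfAt}), this yields the assertion recorded just before the theorem, namely that $\Eq(\gmoa)$ is generated by the complex algebras of ii--relations whose lower level is a singleton. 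The two-element algebra needs no separate treatment: it is a homomorphic image of the complex algebra of the two-point such frame.

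What remains is purely a matter of naming the logic of this family of frames. Under the dual isomorphism between the lattice of normal modal logics and $\Lambda(\moa)$, the variety generated by a class $\set{\Cm(\F):\F\in K}$ of complex algebras is $\Eq(\Log_K)$, where $\Log_K=\bigcap\set{\Log_{\F}:\F\in K}$ is the logic determined by $K$ (each identity of modal algebras being equivalent to the assertion that a suitable formula is a theorem, and $\Log_{\Cm(\F)}=\Log_{\F}$). Applying this with $K$ the class of ii--relations having a one-element lower level gives $\Eq(\gmoa)=\Eq(\Log_K)$. By~\cite[p.~53]{bs84} (see also~\cite{em77,maks75}), $\Log_K$ is precisely the pretabular logic $\mathbf{S4GrzB_2}$, and $\mathbf{S4GrzB_2}=\mathbf{S4MDumB_2}$ by~\cite[p.~107]{seg71}. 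Combining these, $\Eq(\gmoa)=\Eq(\mathbf{S4MDumB_2})$.

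There is no genuine obstacle at this stage: the work has been done in Theorems~\ref{thm:gmoapu}, \ref{cor:sigmoa}, \ref{thm:iiframe} and Lemma~\ref{thm:si-gmoa}, and the identification of the logic of these ``fan'' frames with $\mathbf{S4GrzB_2}$ is classical. The one point requiring a moment's care is the verification that the lower level of the canonical frame really collapses to a single point when $b$ is an atom, so that one lands exactly on the frames known to generate $\mathbf{S4GrzB_2}$. It is also worth flagging the contrast with Section~\ref{sec:fmoa} and the \mmoa\ case: one cannot here take the shortcut of applying J{\'o}nsson's Lemma directly to \gmoa, because by Example~\ref{ex:ii} that class is not closed under subalgebras; this is precisely why the argument must route through the characterization of subdirectly irreducibles in Theorem~\ref{cor:sigmoa}.
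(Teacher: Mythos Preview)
Your proposal is correct and follows essentially the same route as the paper: the paragraph preceding the theorem in the paper already records that $\Eq(\gmoa)$ is generated by the complex algebras of ii--relations with a one-element lower level, cites \cite[p.~53]{bs84}, \cite{em77}, \cite{maks75} for the identification of the logic of these fan frames with $\mathbf{S4GrzB_2}$, and \cite[p.~107]{seg71} for $\mathbf{S4GrzB_2}=\mathbf{S4MDumB_2}$. You have simply unpacked the phrase ``from the previous results'' into its constituent pieces (local finiteness, Theorem~\ref{cor:sigmoa}, Lemma~\ref{thm:si-gmoa}, Theorem~\ref{thm:iiframe}), and your closing remark explaining why the detour through Theorem~\ref{cor:sigmoa} is forced---\gmoa\ not being universal---is a useful gloss that the paper leaves implicit.
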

 Figures \ref{fig:2level} and \ref{fig:2linc} summarize the corresponding frame relations of depth two:
\begin{description}
\item[$R^{\text{ii}}$:] Simple clusters on both levels. If the lower level contains just one simple cluster, the complex algebra is subdirectly irreducible. The corresponding logic is the pretabular logic $\mathbf {S4MDumB_2}$, and $\Eq(\gmoa)$ is the class of its algebraic models.
\item[$R^{\text{iu}}$:]  Simple clusters on the lower level, one cluster on the upper level. If the lower level contains just one simple cluster, the complex algebra is subdirectly irreducible. The corresponding logic is $\mathbf{S4.3DumB_2}$, and $\Eq(\imoa)$ is the class of its algebraic models.
\item[$R^{\text{ui}}$:] One cluster on the lower level, simple clusters on the upper level. Since such a frame is rooted, its complex algebra is subdirectly irreducible. The corresponding logic is $\mathbf{S4MB_2}$, and $\Eq(\fmoa)$ is the class of its algebraic models.
\item[$R^{\text{uu}}$:] One cluster on the lower level, one cluster on the upper level. Since such a frame is rooted, its complex algebra is subdirectly irreducible. The corresponding logic is $\mathbf{S4.3B_2}$, and $\Eq(\mmoa)$ is the class of its algebraic models.
\end{description}

\begin{figure}[h!tb]
\begin{minipage}[t]{0.6\textwidth}
\caption{The relations of depth two}\label{fig:2level}
\vspace{3mm}
\centering
\includegraphics[width=0.9\textwidth]{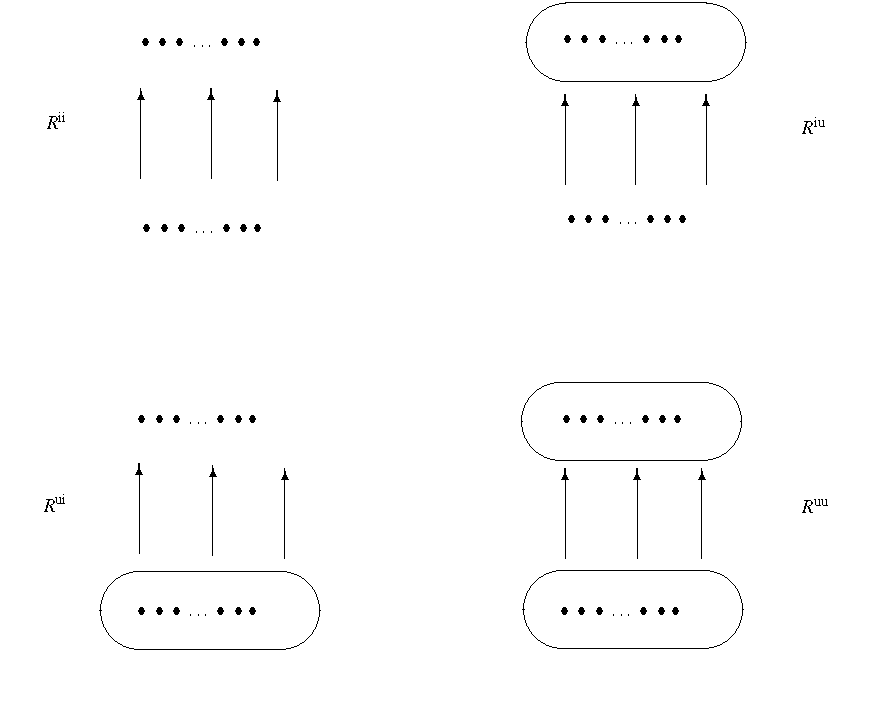}
\end{minipage}
\begin{minipage}[t]{0.33\textwidth}
\caption{Form and inclusion of the relations}\label{fig:2linc}
\vspace{3mm}
\centering
\begin{tabular}{cl}
Type & $U$ level 1, $V$ level 2 \\ \hline
ii & $(U \times V) \cup 1'$ \\
iu & $(U \times V) \cup (V \times V) \cup 1'$ \\
ui & $(U \times V) \cup (U \times U) \cup 1'$ \\
uu & $(U \times V) \cup (U \times U) \cup (V \times V$)
\end{tabular}

  $$
\xymatrix{
& R_a^{\text{uu}} & \\
R_a^{\text{iu}} \ar@{->}[ru]^{\subseteq}  && R_a^{\text{ui}} \ar@{->}[lu]_{\supseteq} \\
& R_a^{\text{ii}} \ar@{->}[ru]_{\subseteq} \ar@{->}[lu]^{\supseteq} &
}
$$
\end{minipage}
\end{figure}

\section{Meet and join of classes of algebras of depth two}\label{sec:meet}

The four varieties we have considered are locally finite with the subdirectly irreducibles contained in the generating class, and they all contain the two element closure algebra. Table \ref{tab:clsi} lists the algebra classes, the corresponding logics, and the closed elements of the subdirectly irreducibles when $\card{B} \geq 4$.
\begin{table}[h!tb]
\caption{Algebras, logics, subdirectly irreducibles}\label{tab:clsi}
\begin{center}
\begin{tabular}{llll}
\multicolumn{1}{c}{Class} & Logic & \multicolumn{2}{c}{Closed elements when $f \neq f^{\one}$} \\ \hline
\gmoa & $\mathbf {S4MDumB_2}$ & $\set{0} \cup \ua{b}$, $b$ an atom & (Theorem~\ref{thm:si-gmoa}). \\
\fmoa & $\mathbf{S4MB_2}$ &$\set{0} \cup \ua{b}$ & (Theorem~\ref{thm:logfmoa}). \\
\imoa & $\mathbf{S4.3DumB_2}$ &$\set{0,a,1}$, $a$ an atom & (Theorem~\ref{thm:idalg}). \\
\mmoa & $\mathbf{S4.3B_2}$ &$\set{0,a,1}$ & (Theorem~\ref{thm:si-mmoa})
\end{tabular}
\end{center}
\end{table}
We see that $\Si(\gmoa) \subseteq \Si(\fmoa)$ and $\Si(\imoa) \subseteq \Si(\mmoa)$, which implies $\Eq(\gmoa) \subseteq \Eq(\fmoa)$ and $\Eq(\imoa) \subseteq \Eq(\mmoa)$.

In the sequel we shall suppose that $\klam{B,f}$ is finite with at least four elements unless stated otherwise. To simplify notation  we shall index the operators with a subscript that indicates the special element generating the associated filter or ideal, depending on the superscript; for example, $f^{\text{iu}}_a$ is the ideal algebra with associated ideal $\da{a}$, and $f^{\text{ui}}_b$ is the filter algebra with associated filter $\ua{b}$. As a preparation for the description of the meet of $\Eq(\mmoa)$ and $\Eq(\fmoa)$ we observe the following:
\begin{lemma}\label{lem:uuui}
If $\klam{B,f} \in \mmoa \cap \fmoa$ is subdirectly irreducible such that $f = f_a^{\text{uu}} = f_b^{\text{ui}}$ and $a,b \not\in \set{0,1}$, then $b$ is an antiatom, and $a = b$.
\end{lemma}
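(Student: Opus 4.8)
The plan is to reduce everything to the set $\Bc$ of closed elements of $f$, which depends only on $f$ and not on which presentation of $f$ we use. First I would compute $\Bc$ from the MaxId presentation $f = f_a^{\text{uu}}$: reading off the case definition, $f(x) = x$ holds exactly when $x \in \set{0,a,1}$ (for $0 \lneq x \leq a$ with $x \neq a$ we get $f(x) = a \neq x$, and for $x \not\leq a$ with $x \neq 1$ we get $f(x) = 1 \neq x$), so $\Bc = \set{0,a,1}$, a set of exactly three elements since $a \notin \set{0,1}$. Next I would compute $\Bc$ from the filter presentation $f = f_b^{\text{ui}}$: by Theorem~\ref{thm:fa}, $f(x) = b + x$ for $x \neq 0$, hence $f(x) = x$ holds iff $x = 0$ or $b \leq x$, i.e. $\Bc = \set{0} \cup \ua{b}$.

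Equating the two expressions for $\Bc$ gives $\ua{b} = \set{a,1}$. Since $b \in \ua{b}$ and $b \neq 1$, this forces $b = a$. Furthermore, $\ua{b} = \set{b,1}$ together with $b \neq 1$ says precisely that there is no element strictly between $b$ and $1$; that is, $b$ is an antiatom of $B$, which is the remaining claim.

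I expect no real obstacle here: the argument is a direct comparison of two formulas for $\Bc$, the only mildly delicate point being the routine verification that the closed elements of a MaxId operator $f_a^{\text{uu}}$ are exactly $\set{0,a,1}$ (the filter side is immediate from Theorem~\ref{thm:fa}). It is worth noting that the hypotheses of subdirect irreducibility and of $B$ being finite are not actually used in the proof: a MaxId algebra is automatically subdirectly irreducible by Theorem~\ref{thm:si-mmoa}, and a filter algebra determined by $\ua{b}$ with $b \neq 1$ is proper, hence subdirectly irreducible by Theorem~\ref{thm:fmasi}.
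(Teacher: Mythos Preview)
Your proposal is correct and follows essentially the same approach as the paper: compute $\Bc$ in the two presentations, obtain $\set{0,a,1} = \set{0}\cup\ua{b}$, and conclude $\ua{b}=\set{a,1}$, whence $a=b$ and $b$ is an antiatom. Your additional remark that neither subdirect irreducibility nor finiteness is actually used matches the paper's proof, which also invokes neither hypothesis.
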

\begin{proof}
Suppose that $\klam{B,f} \in \mmoa \cap \fmoa$, $f = f_a^{\text{uu}} = f_b^{\text{ui}}$.  Since $0 \lneq a$ and $b \lneq 1$, $f$ is neither the identity nor the unary discriminator. Thus, $\Bc = \set{0,a,1} = \set{0} \cup \ua b$ which implies $\set{a,1} = \ua b$, hence, $a = b$, and $b$ is an antiatom.
\end{proof}

The canonical relation of an algebra satisfying the conditions of Lemma~\ref{lem:uuui} has two levels with one cluster on the lower level, and a simple cluster on the second level. The logic determined by this type of frame is the pretabular logic $\mathbf{S4.3MB_2}$, see~\cite[p. 53, $\klam{W_2,R_2}$]{bs84}. Thus, we obtain
\begin{theorem}
$\Eq(\mmoa) \land \Eq(\fmoa) = \Eq(\mathbf{S4.3MB_2})$.
\end{theorem}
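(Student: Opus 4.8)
The plan is to identify $\Eq(\mmoa)\land\Eq(\fmoa)$ with the set-theoretic intersection $\V\df\Eq(\mmoa)\cap\Eq(\fmoa)$ — an intersection of varieties is again a variety, hence is the meet in $\Lambda(\moa)$ — and then to pin down a generating class for $\V$. Since $\Eq(\mmoa)$ and $\Eq(\fmoa)$ are locally finite, so is $\V$, and therefore $\V$ is generated by its finite subdirectly irreducible members. Moreover, an algebra is subdirectly irreducible and lies in both varieties exactly when it is subdirectly irreducible in each, so $\Si(\V)=\Si(\Eq(\mmoa))\cap\Si(\Eq(\fmoa))$. The problem thus reduces to describing the finite members of this intersection and recognizing the logic determined by their canonical frames.

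For the first step, Corollary~\ref{cor:sifmoa} gives $\Si(\Eq(\mmoa))\subseteq\mmoa$ and Corollary~\ref{cor:sif}(1) gives $\Si(\Eq(\fmoa))\subseteq\fmoa$, so a finite $\klam{B,f}\in\Si(\V)$ is simultaneously a MaxId algebra and a filter algebra. If $\card B=2$ it is $\2$. If $\card B\geq 4$, write $f=f^{\text{uu}}_a=f^{\text{ui}}_b$; a short case check disposes of the degenerate parameter values ($a=1$ would give $f=f^{\one}$, against $\2$ being the only discriminator algebra in $\fmoa$; $b=0$ would give $f=1'$, forcing $\Bc=B=\set{0,a,1}$ and $\card B\leq 3$; $b=1$ is impossible, since no filter algebra has determining filter $\set1$), so $a,b\notin\set{0,1}$ and Lemma~\ref{lem:uuui} yields $a=b$ with $a$ an antiatom, i.e.\ $\Bc=\set{0,a,1}$ with $a$ an antiatom. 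Conversely, every finite algebra of this last form is both a MaxId algebra and a proper filter algebra, hence subdirectly irreducible in each variety by Theorems~\ref{thm:si-mmoa} and~\ref{thm:fmasi}. So, up to isomorphism, the finite members of $\Si(\V)$ are $\2$ together with the algebras with $\Bc=\set{0,a,1}$, $a$ an antiatom.

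Finally I would pass to frames: each such finite algebra is the complex algebra of its canonical frame, and by Theorem~\ref{thm:maxid}(1) that frame is a chain of two clusters whose upper level — the set of ultrafilters omitting the atom $-a$ — is a single point; that is, it is a frame of depth two with one (possibly proper) cluster on the lower level and a simple cluster on the upper level, exactly the frame type recorded in the remarks preceding this statement (the one-point reflexive frame accounting for $\2$). Hence $\V$ is generated by the complex algebras of precisely the frames of this shape, and by \cite[p.\ 53]{bs84} these frames determine the pretabular logic $\mathbf{S4.3MB_2}$; therefore $\V=\Eq(\mathbf{S4.3MB_2})$. I expect the only delicate point to be the case analysis that brings us inside the hypotheses of Lemma~\ref{lem:uuui}; the frame-theoretic identification of $\mathbf{S4.3MB_2}$ is quoted from~\cite{bs84}, and everything else is bookkeeping with local finiteness and the subdirect-irreducibility facts already in place.
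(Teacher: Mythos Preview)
Your argument is correct and follows the same route as the paper: both reduce to Lemma~\ref{lem:uuui} to pin down the finite subdirectly irreducibles in $\Eq(\mmoa)\cap\Eq(\fmoa)$ and then identify the resulting frame type (one cluster below, a simple cluster above) with the one determining $\mathbf{S4.3MB_2}$ via~\cite{bs84}; your version is simply more explicit about local finiteness and the degenerate parameter values, which the paper leaves implicit. One small slip: the upper level of the canonical frame consists of the ultrafilters \emph{containing} the atom $-a$ (equivalently, omitting the antiatom $a$), not omitting $-a$ --- the conclusion that it is a single point is unaffected.
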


This also follows from Theorems \ref{thm:logfmoa} and \ref{thm:logmmoa}, since the lattice of normal modal logics is dually isomorphic to the lattice of equational classes of modal algebras, see e.g.~\cite{blok80a}.

If $B$ is generated by its atoms $a,b$, and $I = \da{a}$, then $B$ has two ultrafilters $F_a,F_b$ and $F\mathrel{R_f}G$ \tiff $I \cap G \subseteq F$~\cite{dd21}. If $G = F_b$, then $I \cap G = \z$, and it follows that $F_b\mathrel{R_f}F_b$ and also $F_a \mathrel{R_f} F_b$. Furthermore, $\set{a} \subseteq F_a$ implies $F_a \mathrel{R_f} F_a$. On the other hand, $\set{a} = I \cap F_a \not\subseteq F_b$, and thus, $F_b(-\mathrel{R_f})F_a$. This shows that the canonical relation of $\klam{B,f}$ has two levels each containing a simple cluster; the element in the lower level is $R_f$-related to the element on the upper level, but not vice versa; in other words, $R_f$ is the two element chain. Let us denote a frame of this type by $\F_2$, and its complex algebra by $\F_2^+$. Note that $\F_2^+$ is subdirectly irreducible, but not simple. Furthermore, the set of closed elements of $\F_2^+$ is a chain of length three. Conversely, if $\card{B} = 4$, and $\Bc$ has three elements, then $\klam{B,f} \cong \F^+_2$. Clearly, $\F^+_2$ is in all four classes of algebras which we have considered.

\begin{theorem}
$\Eq(\gmoa) \land \Eq(\mmoa) = \Eq(\F_2^+)$.
\end{theorem}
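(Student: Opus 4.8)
The plan is to compare subdirectly irreducible algebras. Since the meet in the lattice of subvarieties is intersection, and since for any two varieties $\V_1,\V_2$ of closure algebras one has $\Si(\V_1 \land \V_2) = \Si(\V_1) \cap \Si(\V_2)$ (an algebra is subdirectly irreducible in $\V_1 \cap \V_2$ exactly when it is subdirectly irreducible and lies in both), it suffices to show that $\Si(\Eq(\gmoa)) \cap \Si(\Eq(\mmoa))$ and $\Si(\Eq(\F_2^+))$ coincide, and then to invoke the fact that a variety is generated by its subdirectly irreducible members.

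First I would pass from the generated varieties to the generating classes. By Corollary~\ref{cor:sigmoa} every subdirectly irreducible member of $\Eq(\gmoa)$ already lies in \gmoa, so $\Si(\Eq(\gmoa)) = \Si(\gmoa)$; likewise Corollary~\ref{cor:sifmoa} gives $\Si(\Eq(\mmoa)) = \Si(\mmoa)$. The point worth making explicitly is that a subdirectly irreducible algebra of the intersection need not, on its own, belong to \gmoa\ or to \mmoa; but since it lies in both $\Eq(\gmoa)$ and $\Eq(\mmoa)$, both corollaries apply to it simultaneously, and hence it lies in $\gmoa \cap \mmoa$.

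The only genuine computation is then to pin down $\Si(\gmoa) \cap \Si(\mmoa)$. Let $\klam{B,f}$ belong to it with $\card{B} \geq 4$, say $\klam{B,f} \in \gmoa$ with respect to $b$ and $\klam{B,f} \in \mmoa$ with respect to $a$. Then $f$ is not the identity (else $\Bc = B$, forcing $\card B \leq 3$, contradicting $\card B \geq 4$) and not the unary discriminator (else $\set{0,1} = \Bc = \da{b}\cup\ua{b}$ forces $b \in \set{0,1}$, so $f$ is the identity after all); hence $0 \lneq a,b \lneq 1$. Subdirect irreducibility together with Lemma~\ref{thm:si-gmoa} forces $b$ to be an atom, so $\da b = \set{0,b}$ and $\Bc = \set{0,b}\cup\ua{b}$, whereas $\klam{B,f}\in\mmoa$ gives $\Bc = \set{0,a,1}$. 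Comparing the two descriptions, $\set{b,1} \subseteq \ua b \subseteq \set{a,1}$, so $a = b$ and $\ua b = \set{b,1}$, i.e.\ $b$ is also a coatom. Then $-b$ is an atom, $b$ and $-b$ are the only atoms, and so $B$ is the four-element Boolean algebra with $\Bc = \set{0,b,1}$, that is, $\klam{B,f} \cong \F_2^+$. Together with the two-element closure algebra $\2$ (which lies in $\gmoa \cap \mmoa$, both operators there being the identity) this yields $\Si(\gmoa) \cap \Si(\mmoa) = \set{\2,\F_2^+}$ up to isomorphism.

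To finish, I would note that $\F_2^+ \in \gmoa \cap \mmoa$ already gives $\Eq(\F_2^+) \subseteq \Eq(\gmoa)\land\Eq(\mmoa)$, while conversely $\Eq(\gmoa)\land\Eq(\mmoa)$ is generated by its subdirectly irreducibles $\set{\2,\F_2^+}$, and $\2$ is a homomorphic image of $\F_2^+$ (the quotient by the closed ideal $\da b$); hence $\Eq(\gmoa)\land\Eq(\mmoa) = \Eq(\set{\2,\F_2^+}) = \Eq(\F_2^+)$. I expect no deep obstacle: the real work is the careful handling of the degenerate cases ($f$ the identity or the discriminator, $\card B \leq 2$) so that the subdirect-irreducibility criteria of Lemma~\ref{thm:si-gmoa} and Theorem~\ref{thm:si-mmoa} are applied correctly; the combinatorial core, that a Boolean element which is simultaneously an atom and a coatom forces the four-element algebra, is immediate.
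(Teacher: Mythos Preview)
Your proposal is correct and follows essentially the same route as the paper: reduce to subdirectly irreducibles, use Corollaries~\ref{cor:sifmoa} and~\ref{cor:sigmoa} to place any such algebra in $\gmoa\cap\mmoa$, and then compare the two descriptions of $\Bc$ (atom $b$ from Lemma~\ref{thm:si-gmoa} versus $\set{0,a,1}$ from \mmoa) to force $\card{B}=4$. The paper's argument is slightly terser---it does not separately rule out the identity or the discriminator, but simply counts $\card{\Bc}$ from both sides---while you give a bit more detail on the edge cases and on why $\2$ is absorbed into $\Eq(\F_2^+)$; both are fine.
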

\begin{proof}
Since $\F^+_2 \in \Eq(\gmoa) \cap \Eq(\mmoa)$, we need only show the ``$\subseteq$'' direction. Suppose that $\klam{B,f} \in \Eq(\gmoa) \land \Eq(\mmoa)$ is subdirectly irreducible. Indeed, by Corollaries \ref{cor:sifmoa} and \ref{cor:sigmoa} $\klam{B,f}$ belongs to \gma\ and \mma. If $\card{B} = 2$, then it is isomorphic to a subalgebra of $\F_2^+$. Otherwise, $\klam{B,f} \in \Eq(\gmoa)$ implies that there is an atom $b$ of $B$ such that $b \neq 1$ and $\Bc = \set{0} \cup \ua{b}$ by Theorem~\ref{thm:si-gmoa}.  Thus, $\Bc$ has at least three elements. On the other hand, $\klam{B,f} \in \mmoa$ implies that $\Bc$ has at most three elements, and therefore, $\Bc$ has exactly three elements. This implies that $\card{\ua{b}} = 2$, hence, $\card{B} = 4$, since $b$ is an atom. Thus, $\klam{B,f} \cong \F^+_2$.
\end{proof}
Similarly, we can show:
\begin{theorem}
$\Eq(\ima) \land \Eq(\fma) = \Eq(\F_2^+)$.
\end{theorem}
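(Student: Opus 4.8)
The plan is to follow the template of the preceding theorem, using that $\F_2^+$ lies in every one of our four classes — in particular $\F_2^+ \in \ima \cap \fma$ — and that both \ima\ and \fma\ already contain all the subdirectly irreducibles of the varieties they generate. One should record at the outset that $\F_2^+$ really does belong to both \ima\ and \fma: its closed elements form the chain $\set{0,a,1} = \da a \cup \set 1$ with $a$ an atom, and also $\set 0 \cup \ua a$ with $\ua a$ a proper principal filter. Hence the inclusion $\Eq(\F_2^+) \subseteq \Eq(\ima) \land \Eq(\fma)$ is immediate, and the work is in the reverse inclusion. Since $\Eq(\ima) \land \Eq(\fma)$ is generated by its subdirectly irreducible members, it is enough to show that every subdirectly irreducible $\klam{B,f}$ lying in both $\Eq(\ima)$ and $\Eq(\fma)$ belongs to $\Eq(\F_2^+)$.

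So I would take such a $\klam{B,f}$. By Theorem~\ref{thm:idalg} (4) it is an ideal algebra, and by Corollary~\ref{cor:sif} (1) it is a filter algebra. If $\card B = 2$ it is (isomorphic to) the subalgebra $\set{0,1}$ of $\F_2^+$, so one may assume $\card B \geq 4$. Being a subdirectly irreducible filter algebra with $\card B \geq 4$, it is proper by Theorem~\ref{thm:fmasi}, so $\Bc = \set 0 \cup F$ for a filter $F$ with $\set 1 \neq F \neq B$. Being a subdirectly irreducible ideal algebra, its defining ideal $I$ is $\set 0$ or generated by an atom (Theorem~\ref{thm:idalg} (4)); the case $I = \set 0$ would give $f = f^\one$, impossible since $\2$ is the only nontrivial discriminator algebra in \fma. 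Hence $I = \da a$ for an atom $a$ and $\Bc = \da a \cup \set 1 = \set{0,a,1}$.

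Combining the two descriptions of $\Bc$ forces $F = \Bc \setminus \set 0 = \set{a,1} = \ua a$; thus $a$ is simultaneously an atom and a co-atom, which collapses $B$ to the four-element Boolean algebra $\set{0,a,-a,1}$ with $\Bc = \set{0,a,1}$, i.e. $\klam{B,f} \cong \F_2^+$. So every subdirectly irreducible member of $\Eq(\ima) \cap \Eq(\fma)$ is isomorphic to $\F_2^+$ or to a subalgebra of it, and therefore $\Eq(\ima) \land \Eq(\fma) \subseteq \Eq(\F_2^+)$.

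I do not expect a genuine obstacle: the argument is essentially bookkeeping, parallel to the proof of $\Eq(\gmoa) \land \Eq(\mmoa) = \Eq(\F_2^+)$. The only step that needs a moment's care is the confrontation of the ideal-algebra shape of $\Bc$ with its filter-algebra shape — the ideal side pins $\Bc$ to $\set{0,a,1}$ with $a$ an atom, and matching this against $\set 0 \cup F$ forces the generating filter to be $\ua a$, hence $a$ to be a co-atom as well, which is what brings $\card B$ down to $4$.
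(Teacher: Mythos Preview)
Your proof is correct and follows exactly the template the paper intends by its ``Similarly, we can show'': reduce to subdirectly irreducibles, use Theorem~\ref{thm:idalg}(4) and Corollary~\ref{cor:sif}(1) to place them in $\ima \cap \fma$, and then confront the two descriptions of $\Bc$ to force $\card B = 4$. The only cosmetic difference is that you exclude the discriminator case via ``$\2$ is the only nontrivial discriminator algebra in \fma'' rather than directly noting that $\Bc = \set{0,1}$ would make $F = \set{1}$, contradicting the definition of a filter algebra with $\card B \geq 4$; either works.
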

It is well known, e.g. from~\cite{zeman71}, that the logic determined by $\Eq(\F_2^+)$ is the logic $\mathbf{S4.3MDumB_2}$, called \emph{K4} by Soboci{\'n}ski~\cite{Sobocinski1964}, not to be confused with the nowadays common notation \textbf{K4} for the class of normal logics determined by transitive frames.

The meet sub--semilattice of the lattice of modal varieties generated by the four varieties we have considered is shown in Figure \ref{fig:meet}.

\begin{figure}[htb]
\caption{The meets of classes generated by algebras of depth two and extremal frame relations}\label{fig:meet}
$$
\xymatrix{
\Eq(\fmoa) \ (\mathbf{S4MB_2})  &  & \Eq(\mmoa)  \ (\mathbf{S4.3B_2})\\
\Eq(\gmoa) \ar@{->}[u] (\mathbf{S4MDumB_2}) &\Eq(\fmoa) \land \Eq(\mmoa) \ (\mathbf{S4.3MB_2}) \ar@{->}[ru] \ar@{->}[lu] &
\Eq(\imoa)  \ (\mathbf{S4.3DumB_2}) \ar@{->}[u] \\
 &  \Eq(\set{\F_2^+})  (\mathbf{S4.3MDumB_2}) \ar@{->}[lu] \ar@{->}[ru] \ar@{->}[u]
}
$$
\end{figure}
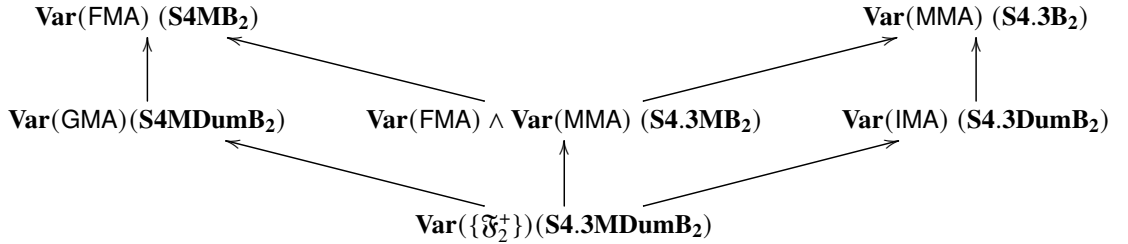

Some remarks on pretabularity are in order. $\imoa$  has as an important subclass the class of discriminator algebras  $\dia$,  (corresponding to the ideal $\{0\}$). If we were to add to Figure \ref{fig:meet} the class $\Eq (\dia)$ belonging to \textbf{S5}  in the lower right hand corner, we realize that the classes of closure algebras that we study include all three types of  finite closure algebras belonging to pretabular logics of depth at most two: $\mathbf{S5}$ (\emph{clots}), $\mathbf{S4.3MB_2}$ (\emph{tacks}), and $\mathbf{S4MDumB_2}$, i.e. $\mathbf{S4Grz B_2}$ (\emph{fans}). We do not capture the remaining two types: \emph{Chains}, which may have  any finite depth, and \emph{tops} which have depth three, see e.g. \cite{em77}.

For the meet of the logics (or join of corresponding varieties)  we apply Theorem~3 from~\cite{zwc01}, adapted for \texttt{S4} logics. If $\phi, \psi$ are modal formulas we denote by $\phi \ulor \psi$ the formula obtained by replacing the variables in $\varphi \lor \psi $ in such a way that $\varphi $ and $\psi$ have no variables in common.
\begin{theorem}\label{thm:lmeet}
Suppose that $\mathbf{L}_1$ and $\mathbf{L}_2$ are \texttt{S4} logics, say, $\mathbf{L}_1 = \K \cup  \{\phi_i :  i \in I \}$, $\mathbf{L}_2 = \K \cup  \{\psi_j :  j \in J \}$. Then,
\begin{gather}\label{lmeet}
\mathbf{L}_1 \wedge \mathbf{L}_2 = \K \cup \{ \Box \phi_i  \ulor \Box \psi_j  :  i \in I ,  j \in J \}.
\end{gather}
\end{theorem}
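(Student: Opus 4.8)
The plan is to reduce this to a known join-of-logics theorem and verify the inclusions in both directions, working on the algebraic side via the dual isomorphism between the lattice of normal modal logics and the lattice of varieties of modal algebras. Recall that for varieties $\V_1, \V_2$ of closure algebras, Lemma~\ref{lem:si}(2) gives $\Si(\V_1 \lor \V_2) = \Si(\V_1) \cup \Si(\V_2)$, so $\V_1 \lor \V_2$ is generated by the subdirectly irreducibles of $\V_1$ together with those of $\V_2$; dually, $\mathbf{L}_1 \wedge \mathbf{L}_2$ is the logic of the class $K_{\mathbf{L}_1} \cup K_{\mathbf{L}_2}$ of frames validating $\mathbf{L}_1$ or $\mathbf{L}_2$. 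Since $\mathbf{L}_1$ and $\mathbf{L}_2$ both extend \texttt{S4}, the first step is to cite the general disjunction-of-logics result, Theorem~3 of~\cite{zwc01}, which computes the meet of two modal logics in terms of a ``box-ed disjunction'' of their respective axiom sets over disjoint variables; the content here is just to check that its hypotheses are met and that its conclusion specializes to \eqref{lmeet} in the presence of the \texttt{S4} axioms \texttt{T} and \texttt{4}.

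For the concrete verification I would argue both inclusions of \eqref{lmeet} semantically. For ``$\supseteq$'': every formula of $\K$ lies in $\mathbf{L}_1 \wedge \mathbf{L}_2$ trivially. For a formula $\Box\phi_i \ulor \Box\psi_j$, take any frame $\F = \klam{W,R}$ validating $\mathbf{L}_1$ (the case $\mathbf{L}_2$ being symmetric). Since $\F \models \phi_i$ and $\phi_i$ and $\psi_j$ share no variables, any valuation restricted to the variables of $\phi_i$ forces $v(\phi_i) = W$; hence $v(\Box\phi_i) = W$ as well (using that $R$ is reflexive, though in fact $v(\phi_i)=W$ already gives $v(\Box\phi_i)=W$ directly), and so $v(\Box\phi_i \ulor \Box\psi_j) = W$. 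Thus $\F \models \Box\phi_i \ulor \Box\psi_j$, and since this holds for every frame validating $\mathbf{L}_1$ or $\mathbf{L}_2$, the formula is a theorem of $\mathbf{L}_1 \wedge \mathbf{L}_2$. For ``$\subseteq$'', suppose $\chi \in \mathbf{L}_1 \wedge \mathbf{L}_2$, i.e. $\chi$ is valid on every frame validating $\mathbf{L}_1$ and on every frame validating $\mathbf{L}_2$; we must show $\chi$ is derivable in the right-hand logic $\mathbf{L}'$. Here I would invoke the standard argument: $\mathbf{L}'$ is contained in $\mathbf{L}_1$ (each $\Box\phi_i \ulor \Box\psi_j$ is a theorem of $\mathbf{L}_1$ because $\Box\phi_i$ already is, \texttt{S4} being normal and $\phi_i \in \mathbf{L}_1$) and likewise $\mathbf{L}' \subseteq \mathbf{L}_2$, so $\mathbf{L}' \subseteq \mathbf{L}_1 \wedge \mathbf{L}_2$; the reverse containment is exactly the non-trivial half of~\cite[Theorem 3]{zwc01}, whose proof produces, from a frame refuting $\chi$ over $\mathbf{L}'$, a ``disjoint union / gluing'' frame that refutes some $\Box\phi_i \ulor \Box\psi_j$ unless it already validates $\mathbf{L}_1$ or $\mathbf{L}_2$, in either case contradicting $\chi \in \mathbf{L}_1 \wedge \mathbf{L}_2$.

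The main obstacle is that the cited theorem from~\cite{zwc01} is stated for general normal modal logics, and one must be careful that the syntactic form of the meet simplifies correctly under \texttt{S4}: over \texttt{K} the meet is typically described using all finite ``$\Box$-prefixed'' disjunctions or a master modality, and collapsing this to the single-box form $\Box\phi_i \ulor \Box\psi_j$ requires transitivity (so that nested boxes reduce) and reflexivity (so that $\Box$ is a genuine ``universal-over-successors'' modality with $\Box\phi_i$ equivalent, over frames validating $\phi_i$, to $\phi_i$ itself). I would therefore spell out in the proof that, since both $\mathbf{L}_1, \mathbf{L}_2 \supseteq \texttt{S4}$, the frames in play are quasiorders, on which $\Box\phi$ behaves as expected, and that the general formula of~\cite{zwc01} specializes to \eqref{lmeet}. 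Everything else — the two inclusions above — is then routine, and the dual statement for varieties, $\Eq(\mathbf{L}_1) \lor \Eq(\mathbf{L}_2) = \Eq(\mathbf{L}_1 \wedge \mathbf{L}_2)$, follows from the lattice anti-isomorphism already cited from~\cite{blok80a}.
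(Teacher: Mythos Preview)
Your proposal is essentially aligned with the paper's treatment: the paper does not prove Theorem~\ref{thm:lmeet} at all but simply records it as ``Theorem~3 from~\cite{zwc01}, adapted for \texttt{S4} logics'', and your plan is exactly to invoke that result and explain why the \texttt{S4} hypothesis collapses the general box-prefixed form to a single $\Box$. One small cautionary note: in your first ``$\supseteq$'' paragraph you argue via frame validity and conclude that the formula ``is a theorem of $\mathbf{L}_1 \wedge \mathbf{L}_2$'', but validity on all $\mathbf{L}_1$-frames only places a formula in the Kripke closure of $\mathbf{L}_1$, not in $\mathbf{L}_1$ itself; the clean argument is the syntactic one you give in the next paragraph ($\phi_i \in \mathbf{L}_1 \Rightarrow \Box\phi_i \in \mathbf{L}_1$ by necessitation, hence $\Box\phi_i \ulor \Box\psi_j \in \mathbf{L}_1$, and symmetrically for $\mathbf{L}_2$), so you should lead with that and drop the frame-based version of the easy direction.
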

Using Theorem~\ref{thm:lmeet} we obtain
\begin{corollary}
\begin{enumerate}
\item $\mathbf{S4MB_2} \wedge \mathbf{S4.4} = \mathbf{S4B_2 (\Box M \ulor \Box .4)}$.
\item $\mathbf{S4.3MB_2} \land \mathbf{S4.3DumB_2} = \mathbf{S4.3B_2(\Box M \ulor \Box Dum)}$.
\item $\mathbf{S4.3DumB_2} \land \mathbf{S4MDumB_2} = \mathbf{S4DumB_2(\Box M \ulor \Box .3)}$.
\item $\mathbf{S4MB_2} \land \mathbf{S4.3B_2} = \mathbf{S4B_2 (\Box M \ulor \Box .3)}$.
\end{enumerate}
\end{corollary}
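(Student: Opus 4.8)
The statement to prove is the final Corollary, parts (1)--(4), which express the meets of various pairs of our logics via Theorem~\ref{thm:lmeet}. The plan is to treat each item as a direct instantiation of the formula \eqref{lmeet}, after first writing each of the two logics in the required form $\K \cup \set{\phi_i : i \in I}$ and $\K \cup \set{\psi_j : j \in J}$ over a common $\texttt{S4}$-base. I would begin item (1) by recording $\mathbf{S4MB_2} = \mathbf{S4} \cup \set{\texttt{B}_2, \texttt{M}}$ and $\mathbf{S4.4} = \mathbf{S4} \cup \set{\texttt{B}_2, \texttt{R1}}$ (using $\texttt{.4} = \texttt{R1}$ and the identification $\mathbf{S4.4} = \mathbf{S4.3DumB_2}$ noted after Theorem~\ref{thm:idalg}); since $\texttt{B}_2$ is common to both, the only genuinely distinct extra axioms are $\texttt{M}$ on one side and $\texttt{.4}$ on the other, so Theorem~\ref{thm:lmeet} gives directly $\mathbf{S4MB_2} \wedge \mathbf{S4.4} = \mathbf{S4} \cup \set{\texttt{B}_2} \cup \set{\Box\texttt{M} \ulor \Box\texttt{.4}} = \mathbf{S4B_2(\Box M \ulor \Box .4)}$, which is exactly the claimed identity. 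Items (2), (3), (4) then follow by the same bookkeeping: in (2) both logics already contain $\texttt{.3}$ and $\texttt{B}_2$, so the distinct axioms are $\texttt{M}$ versus $\texttt{Dum}$; in (3) both contain $\texttt{Dum}$ and $\texttt{B}_2$, so the distinct axioms are $\texttt{M}$ versus $\texttt{.3}$; and in (4) the common base is $\mathbf{S4B_2}$, so the distinct axioms are $\texttt{M}$ versus $\texttt{.3}$.

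The one subtlety I would be careful about is that Theorem~\ref{thm:lmeet} requires the two logics to be presented \emph{over the same} $\texttt{S4}$-base $\K$ (read here as $\texttt{S4}$ plus whatever axioms one wishes to hold of both), and the formula \eqref{lmeet} keeps that common base and replaces only the \emph{genuinely extra} axioms by the disjunctions $\Box\phi_i \ulor \Box\psi_j$. So for each item I must make sure I have pulled \emph{all} shared axioms into the base, leaving singleton index sets $I = \set{\phi}$, $J = \set{\psi}$; otherwise the right-hand side would contain superfluous disjunctions. Concretely: for (1) the base is $\mathbf{S4B_2}$ with $I = \set{\texttt{M}}$, $J = \set{\texttt{.4}}$; for (2) the base is $\mathbf{S4.3B_2}$ with $I = \set{\texttt{M}}$, $J = \set{\texttt{Dum}}$; for (3) the base is $\mathbf{S4DumB_2}$ with $I = \set{\texttt{M}}$, $J = \set{\texttt{.3}}$; for (4) the base is $\mathbf{S4B_2}$ with $I = \set{\texttt{M}}$, $J = \set{\texttt{.3}}$. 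In each case $\card{I} = \card{J} = 1$, so \eqref{lmeet} produces a single disjunction, matching the stated answers verbatim.

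The main obstacle, such as it is, is purely notational rather than mathematical: one has to verify that the axiom-name abbreviations used on the right-hand side of each claimed identity (e.g. $\mathbf{S4B_2(\Box M \ulor \Box .4)}$) unwind to precisely $\K \cup \set{\Box\phi \ulor \Box\psi}$ with $\K$ the intended common base, i.e. that the boldface name encodes the same set of axioms that Theorem~\ref{thm:lmeet} yields. There is no real analytic content to check — no frame arguments, no completeness arguments — because Theorem~\ref{thm:lmeet} is being invoked as a black box and the $\texttt{S4}$-hypothesis plainly holds for all logics involved (all extend $\mathbf{S4}$). Thus the proof is essentially a one-line application of Theorem~\ref{thm:lmeet} in each of the four cases, and I would present it as such, perhaps writing out case (1) in full and remarking that (2)--(4) are identical after substituting the appropriate common base and the appropriate pair of extra axioms.
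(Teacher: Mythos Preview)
Your proposal is correct and is exactly the approach the paper takes: the paper's entire proof is the phrase ``Using Theorem~\ref{thm:lmeet} we obtain'', and your write-up simply unpacks that one-line application by identifying, in each of the four cases, the common \texttt{S4}-base and the single pair of distinguishing axioms to which \eqref{lmeet} is applied. There is nothing to add; your care about pulling all shared axioms into the base so that $\card{I}=\card{J}=1$ is precisely the bookkeeping needed to match the stated right-hand sides.
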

By Theorem~\ref{lem:423}, \textbf{S4.3} may be replaced by \textbf{S4.2}. The subdirectly irreducibles of the corresponding equational classes of modal algebras may be obtained by Lemma~\ref{lem:si}(2).

Including or excluding some particular ``boundary conditions'' may give rather  unexpected changes in the resulting varieties and logics. For instance, if in the definition of filter algebras (which form the class \fmoa ) one allows $F$ to be equal to $\set{1}$, we obtain the class $\fmoa \vee \dia$,
whose meet $\fmoa \wedge \dia$ contains only the closure algebra with at most two-elements, in terms of logic,  $\mathbf{S4MB_2} \vee \mathbf{S5} = \mathbf {Triv}$. Moreover, the logic $\mathbf{S4MB_2} \land \mathbf{S5}$   determined by $ \fmoa \vee \dia $ differs from  $\mathbf{S4MB_2}$ substantially. Also, Figure \ref{fig:meet} becomes much more complicated.


\section{Quasivarieties of depth two algebras: Structural completeness}\label{sec:SC}

In this section  we consider rules of inference in some of the logics we have considered, their related algebras, and their quasivarieties. In general, structural completeness concerns a deductive system $\L $  (where $\L$  can be determined by a set of axioms and a set of rules of inference)  or its consequence operation $\vdash_\L$, not just a logic understood as a set of formulas closed with respect to some rules.  A deductive system \L \ (or its consequence operation $\vdash_\L$)   is called {\it structurally complete} (SC) if every admissible rule in \L \ is also derivable in \L\ . The set of theorems of a system \L \; is denoted by $\Log$, hence,
\begin{center}
 $\vdash_\L \phi$ if and only if $\phi$ is derivable in $\L$  if and only
if $\phi \in \Log$.
 \end{center}
 A consequence operation $\vdash $ is SC \tiff  $\vdash $ is maximal among all $\vdash '$ such that:  $\emptyset \vdash' \psi$ \tiff  $\emptyset \vdash  \psi$ for all $\psi$, i.e. if they have the same set of theorems.
  A rule $r: \phi_1, \dots, \phi_n \slash \psi$  is {\it passive} if   for every substitution $\varepsilon$, $\{\varepsilon \phi_1, \dots, \varepsilon \phi_n \} \not \subseteq  \Log$. For example, the rule $P_2$
\begin{gather}\label{p2}
\Diamond \phi \land \Diamond \neg \phi \slash \psi
\end{gather}
or, equivalently, $\Diamond \phi \land \Diamond \neg \phi \slash \bot$, is passive, hence admissible (but not derivable  in many modal logics, for
example, in \textbf{S5}). Therefore, we call \L\ {\it almost structurally
complete (ASC)}, if every rule which is admissible and not passive is derivable in \L, see~\cite{ds16}%
\footnote{Recently ``almost structural completeness'' was also called ``active structural completeness'', since passive rules are neglected.}%
. Slightly abusing the terminology, we say that a modal logic $\Log$ is
SC (ASC) if its standard consequence operation, understood as based on the axioms of $\Log$ plus Modus Ponens and the Necessitation rule only, denoted here by $\vdash_\Log$,   is SC (ASC). For instance \textbf{S5} and, indeed, every extension of \textbf{S4.3} is ASC but, in general, not SC, see~\cite{dw12}.

For a variety \K\ of algebras, let $\mathcal F_\K(\lambda)$ be its $\lambda$-generated free algebra; we omit \K\ if no confusion can arise. The following descriptions of SC and ASC (adapted here to closure algebras) are
known, see~\cite{ber88},~\cite{ds16}.

\begin{theorem}\label{algASC}  Let \K\ be a locally finite variety of closure algebras. Then
\begin{enumerate}
\item \K\ is SC \tiff for every finite subdirectly irreducible algebra \A
\ in \K\ , \A \ embeds into  $\mathcal F(\omega)$.
\item \K\ is ASC \tiff for every finite subdirectly irreducible algebra \A \ in \K\ , $\mathfrak A  \times \mathcal F(0)$ embeds into  $\mathcal F(\omega)$.
\end{enumerate}
 \end{theorem}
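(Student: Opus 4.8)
The two characterizations we are asked to prove are essentially the algebraic rephrasings of known criteria for structural completeness in the literature (Bergman~\cite{ber88} for SC, and the work cited in~\cite{ds16} for ASC), specialized to the concrete setting of a locally finite variety of closure algebras. So the plan is to recall the general criterion and then do the translation, making sure the two ingredients --- local finiteness and ``every subdirectly irreducible is embeddable into the relevant free algebra'' --- match up correctly.

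For part~(1), the plan is to argue via the standard fact that a variety $\K$ is structurally complete if and only if its free algebra $\mathcal F_\K(\omega)$ on countably many generators generates the whole quasivariety $\QEq(\K)$, equivalently $\QEq(\K) = \K$. First I would unwind what this means: admissible rules of $\vdash_\Log$ correspond to quasi-identities valid in $\mathcal F(\omega)$, while derivable rules correspond to quasi-identities valid throughout $\K$; hence SC says precisely that these two quasi-equational theories coincide, i.e. $\QEq(\mathcal F(\omega)) = \QEq(\K) = \Eq(\K)$. Since $\K$ is locally finite, every finite subdirectly irreducible $\A \in \K$ is finitely presented; combining this with the fact that a quasivariety is generated by $\mathcal F(\omega)$ iff every (finitely presented subdirectly irreducible, hence every finite subdirectly irreducible) member embeds into $\mathcal F(\omega)$, one gets the stated equivalence. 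The direction ``$\A$ embeds into $\mathcal F(\omega)$ for all finite s.i. $\A$ $\Rightarrow$ SC'' is the easy one: if all generators of $\K$ as a quasivariety sit inside $\mathcal F(\omega)$, then $\mathcal F(\omega)$ generates $\Eq(\K)$ as a quasivariety, so every quasi-identity valid in $\mathcal F(\omega)$ is valid in $\K$. The converse uses that a finite s.i. algebra not embeddable into $\mathcal F(\omega)$ would witness a quasi-identity separating $\mathcal F(\omega)$ from $\K$, hence an admissible-but-not-derivable rule.

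For part~(2), the plan is parallel, replacing ``SC'' by ``ASC'' and ``$\QEq(\mathcal F(\omega)) = \Eq(\K)$'' by the weaker statement that the two quasi-equational theories agree on all quasi-identities whose premises are jointly satisfiable by a non-trivial substitution --- equivalently, that $\mathcal F(0)$-times-everything is captured. The key point is the well-known reduction (see~\cite{ds16}): a locally finite variety is ASC iff for every finite s.i. $\A$, the product $\A \times \mathcal F(0)$ embeds into $\mathcal F(\omega)$; the extra factor $\mathcal F(0)$ exactly accounts for the passive rules that one is now allowed to ignore (they are precisely the quasi-identities refuted already by $\mathcal F(0)$ in one of their premises). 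I would spell out that $\mathcal F(0)$ here is the free closure algebra on no generators, and that multiplying by it and re-embedding is what forces non-passive admissible rules to be derivable while leaving passive ones untouched. Then the argument that this embedding condition is necessary and sufficient for ASC is the same kind of quasi-identity bookkeeping as in part~(1).

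The main obstacle --- really the only non-routine point --- is getting the passive/non-passive dichotomy to line up cleanly with the factor $\mathcal F(0)$ in the closure-algebra setting, i.e. verifying that a quasi-identity corresponds to a passive rule exactly when $\mathcal F(0)$ already refutes one of its premise equations. This is where one has to be careful that $\mathcal F_\K(0)$ is non-trivial (it is, since $\2 \in \K$ for all four of our varieties, so $\mathcal F(0)$ has at least two elements) and that ``passive'' in the logical sense matches ``premises unsatisfiable in $\mathcal F(0)$'' in the algebraic sense. Once that correspondence is pinned down, both equivalences follow by the standard machinery of free algebras, finitely presented algebras in locally finite varieties, and the fact (used repeatedly above) that in such varieties the finite subdirectly irreducibles suffice to generate both the variety and the quasivariety.
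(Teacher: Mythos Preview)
The paper does not give its own proof of this theorem: it is stated as a known result, with the sentence ``The following descriptions of SC and ASC (adapted here to closure algebras) are known, see~\cite{ber88},~\cite{ds16}'' immediately preceding it. Your proposal is a reasonable outline of how to extract the two characterizations from those references (Bergman's criterion for SC via $\QEq(\mathcal F(\omega)) = \K$, and the $\A \times \mathcal F(0)$ criterion for ASC from~\cite{ds16}), so it is consistent with the paper's treatment; there is simply no in-paper proof to compare against.

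One small slip: in your discussion of $\mathcal F(0)$ you justify its non-triviality by saying ``$\2 \in \K$ for all four of our varieties'', but the theorem as stated is for an arbitrary locally finite variety of closure algebras, not just the four classes studied in the paper. The correct remark is that $\2$ lies in every non-trivial variety of closure algebras (and in the trivial variety both sides of each equivalence hold vacuously), so $\mathcal F_\K(0)$ is non-trivial whenever $\K$ is. This does not affect the correctness of your plan, but the justification should be phrased at the right level of generality.
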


A substitution  $\varepsilon$ of formulas is called a {\it unifier} for a
formula $\phi$ in the logic $\Log$  if $ \varepsilon \phi\in \Log$. A formula $\phi$ is  {\it unifiable} in  $\Log$, if $\varepsilon \phi \in \Log$ for some substitution $\varepsilon$.
Therefore, a rule $r: \phi_1, \dots, \phi_n \slash \psi$ is passive \tiff
$\phi_1 \land \dots \land \phi_n$ is not unifiable. In case of logics extending $\mathbf {S4}$ it is enough to consider rules of the form $r: \phi
\slash \psi$ only.
A {\it projective unifier} for  $\phi$ in $\Log$  is a unifier $\varepsilon$ for  $\phi$ such that $ \phi\vdash_\Log  \varepsilon(\psi)\leftrightarrow \psi$,  for each formula $\psi$; and one says that a logic $\Log$ enjoys {\it projective unification} if each  $\Log$-unifiable  formula has a projective unifier in $\Log$.
 Projective unifiers (formulas, substitutions) were introduced and extensively used by Silvio Ghilardi in his papers on unification of 1997--2004, see e.g.~\cite{Ghi2,Ghi3}. We have the following result (see~\cite{dw12}):
\begin{theorem}\label{ASC-M}
Let $\Log$  be a logic which enjoys  projective unification. Then $\Log$ is almost structurally complete. If, in addition,  any formula which  is not $\Log$-unifiable is inconsistent in $\Log$, then $\Log$ is structurally complete.
\end{theorem}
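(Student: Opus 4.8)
The plan is to obtain both assertions directly from the definition of a projective unifier, treating only single-premise rules $r\colon \phi \slash \psi$ (the general case reduces to this by conjoining the premises, which affects neither passivity, nor admissibility, nor — via the implication ``$\phi_1\wedge\cdots\wedge\phi_n\slash\psi$ derivable $\Rightarrow$ $\phi_1,\dots,\phi_n\slash\psi$ derivable'' — the point we need). The first assertion, that $\Log$ is ASC, is the substantive one; the second is then a one-line addition.

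For ASC, I would take an admissible, non-passive rule $r\colon \phi \slash \psi$. By the characterisation of passivity recalled above, non-passivity of $r$ means exactly that $\phi$ is $\Log$-unifiable, so projective unification furnishes a projective unifier $\varepsilon$ of $\phi$: we have $\varepsilon\phi \in \Log$ and, for every formula $\chi$, $\phi \vdash_\Log \varepsilon(\chi) \leftrightarrow \chi$. Specialising to $\chi = \psi$ gives $\phi \vdash_\Log \varepsilon(\psi) \leftrightarrow \psi$. On the other hand, admissibility of $r$ applied to the substitution $\varepsilon$ (legitimate since $\varepsilon\phi \in \Log$) yields $\varepsilon\psi \in \Log$, hence $\phi \vdash_\Log \varepsilon\psi$. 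Now $\phi \vdash_\Log \varepsilon\psi$ together with $\phi \vdash_\Log \varepsilon(\psi)\leftrightarrow\psi$ gives $\phi \vdash_\Log \psi$ by purely propositional reasoning under the hypothesis $\phi$. Thus $r$ is derivable; since $r$ was arbitrary, $\Log$ is ASC.

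For the second assertion it remains to show that passive admissible rules are derivable under the extra hypothesis. If $r\colon\phi\slash\psi$ is passive, then $\phi$ is not $\Log$-unifiable, hence by assumption inconsistent in $\Log$, i.e. $\phi \vdash_\Log \bot$; the propositional tautology $\bot \to \psi$ and modus ponens then give $\phi \vdash_\Log \psi$, so $r$ is derivable. Combined with the previous paragraph, every admissible rule of $\Log$ is derivable, i.e. $\Log$ is SC.

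The only delicate point is bookkeeping about the consequence relation: one must check that the manipulations of $\vdash_\Log$ used above are legitimate for the fixed consequence operation of $\Log$ (axioms of $\Log$, modus ponens, necessitation) — namely that $\varepsilon\psi \in \Log$ entails $\phi \vdash_\Log \varepsilon\psi$, that $\phi \vdash_\Log \varepsilon\psi$ and $\phi \vdash_\Log \varepsilon(\psi)\leftrightarrow\psi$ give $\phi\vdash_\Log\psi$, and that $\phi\vdash_\Log\bot$ gives $\phi\vdash_\Log\psi$. All three rest only on the fact that $\vdash_\Log$ extends classical propositional consequence and is closed under modus ponens, so no application of necessitation to a hypothesis is needed; once this is observed, the argument is just the short chain of implications above, with the definition of projective unifier doing all the work.
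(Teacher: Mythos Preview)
Your proof is correct and follows essentially the same approach as the paper's: both use the projective unifier $\varepsilon$ to get $\phi \vdash_\Log \varepsilon(\psi)\leftrightarrow\psi$, combine this with $\varepsilon\psi \in \Log$ (from admissibility) to conclude $\phi \vdash_\Log \psi$, and then handle the SC case via inconsistency of non-unifiable premises. Your version is in fact slightly more explicit than the paper's, which elides the admissibility step yielding $\varepsilon\psi \in \Log$.
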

\begin{proof}
Let $r: \phi \slash \psi$ be an admissible rule in $\Log$ with a unifiable premise  $\phi $ and let $\varepsilon$ be a projective unifier for $\phi $. Then  $ \varepsilon \phi\in \Log$ and $\phi\vdash_\Log  \varepsilon(\psi)\leftrightarrow \psi$, hence $\phi\vdash_\Log \psi$, i.e. $r: \phi \slash \psi$ is derivable in $\Log$ and $\Log$  is ASC.  Now assume, in addition, that any formula which  is not $\Log$-unifiable is inconsistent in $\Log$ and consider any admissible rule in $\Log$,  $r: \phi \slash \psi$  with a premise  $\phi$ which  is not $\Log$-unifiable.  $\phi$ is then inconsistent in $\Log$  and $\phi\vdash_\Log \psi$, for every formula $\psi$, i.e. $\Log$ is SC.
\end{proof}
 In \cite[5.1--5.2]{DzW11} and in \cite[3.3]{kost18} it is shown that
\begin{lemma}\label{P2-M}
Let $\Log$ be a logic extending $\mathbf {S4}$.  If a formula $\phi$ is not unifiable in $\Log$, then $\phi \vdash_\Log  \Diamond \theta \land \Diamond \neg \theta$, for some formula $\theta$.
\end{lemma}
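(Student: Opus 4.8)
The plan is to reduce the statement to a question about Kripke models and then to exhibit a single, fairly explicit, $\theta$. First I would dispose of the trivial case: if $\Log$ is inconsistent then $\phi \vdash_\Log \psi$ for every $\psi$, so assume $\Log$ is consistent. Since $\Log$ is a normal extension of $\mathbf{S4}$ it contains $p \implies \Diamond p$, hence $\Log \subseteq \mathbf{Triv}$, so the two--element closure algebra $\2$ (with $f = 1'$) is a model of $\Log$. I would then record the following reformulation of non--unifiability: writing $\phi^*$ for the Boolean formula obtained from $\phi$ by erasing every $\Diamond$ and $\Box$, the formula $\phi$ is $\Log$--unifiable if and only if $\phi^*$ is classically satisfiable. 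Indeed, a Boolean assignment satisfying $\phi^*$ yields a ground substitution (each variable to $\top$ or $\bot$) turning $\phi$ into a variable--free formula that evaluates to $1$ in $\2$; since variable--free formulas are $\mathbf{S4}$--equivalent to $\top$ or to $\bot$, such a formula is a theorem of $\mathbf{S4} \subseteq \Log$, so the substitution is a unifier. Conversely, composing any unifier with any valuation into $\2$ produces such an assignment. Hence, under our hypothesis, $\phi^*$ is a classical contradiction.

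Next I would pass to $\mathbf{S4}$--Kripke semantics, which is legitimate since $\mathbf{S4}$ is complete for its global consequence relation and $\mathbf{S4} \subseteq \Log$, so it suffices to establish $\phi \vdash_{\mathbf{S4}} \Diamond\theta \land \Diamond\neg\theta$. Let $p_1,\dots,p_n$ be the variables of $\phi$; by a \emph{type} I mean a complete conjunction of literals over $p_1,\dots,p_n$, and I call the set of types realised in $R(w)$ the \emph{spectrum} of the world $w$. The key preliminary fact is that in any $\mathbf{S4}$--model in which $\phi$ is true at every world, each world has spectrum of size at least $2$: if the spectrum of $w$ were a single type $t$, then an easy induction on subformulas $\psi$ of $\phi$ (using that $R(\cdot)$ is nonempty and closed, and that every reachable world has type $t$) shows that $\psi$ takes at each point of $R(w)$ the value of $\psi^*$ under $t$, so $\phi$ would evaluate at $w$ to $\phi^*(t) = 0$, contradicting $w \models \phi$.

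The decisive step is to manufacture a uniform $\theta$. Fix a linear order $<$ on the finitely many types and set
\begin{gather*}
\theta \df \bigvee_{s}\Bigl(\Diamond s \land \bigvee_{t > s} t\Bigr),
\end{gather*}
with $s,t$ ranging over types (written as complete conjunctions). Semantically, $\theta$ holds at a world $u$ exactly when the type of $u$ is strictly above the $<$--least type in the spectrum of $u$, the conjuncts $\Diamond s$ acting as guards detecting which types are reachable. Now take any $\mathbf{S4}$--model with $\phi$ globally true and any world $w$. Choosing $u \in R(w)$ of $<$--least type, one checks (using $u \in R(u) \subseteq R(w)$) that its type is least in the spectrum of $u$, so $u \models \neg\theta$; choosing $u \in R(w)$ of $<$--greatest type, the spectrum of $u$ still has at least two elements by the previous paragraph, so $u$ reaches a strictly smaller type and $u \models \theta$. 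Thus every world sees both a $\theta$--world and a $\neg\theta$--world, i.e. $\phi \vdash_{\mathbf{S4}} \Diamond\theta \land \Diamond\neg\theta$, whence $\phi \vdash_\Log \Diamond\theta \land \Diamond\neg\theta$; when $\phi$ is inconsistent in $\Log$ any $\theta$ serves.

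The hard part is this last step. A naive argument only delivers $\phi \vdash_\Log \bigvee_k (\Diamond\chi_k \land \Diamond\neg\chi_k)$ for finitely many $\chi_k$ (obtained, say, from the types, or from subformulas of $\phi$ where the modal and the degenerate truth values diverge), and such a finite disjunction of ``splits'' does not in general collapse to a single split built from the $\chi_k$ by Boolean operations — a uniform $\theta$ genuinely has to reach outside, as the ``non--minimal type'' formula above does. A minor additional point is justifying the passage between the syntactic $\vdash_\Log$ and the semantic $\models$, which relies on the Kripke completeness of $\mathbf{S4}$ for global consequence (inherited by the extensions $\Log$ considered here).
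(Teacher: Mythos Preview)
The paper does not give its own proof of this lemma; it simply cites \cite[5.1--5.2]{DzW11} and \cite[3.3]{kost18}. Your argument is correct and self--contained. The characterisation ``$\phi$ is $\Log$--unifiable iff $\phi^{*}$ is classically satisfiable'' is exactly the standard reduction (ground substitutions into $\2$), and your spectrum argument in Step~4 is a clean way to extract the needed combinatorial fact from the contradiction $\phi^{*}\equiv\bot$. The explicit $\theta$ --- ``my type is not $<$--minimal among the types I can reach'' --- is a nice uniform witness, and your verification via the least-- and greatest--type successors of $w$ goes through: for the greatest--type $u$ you correctly re--invoke Step~4 at $u$ (not at $w$) to get $\lvert\mathrm{spectrum}(u)\rvert\ge 2$, and $R(u)\subseteq R(w)$ then forces $\mathrm{type}(u)=\max(\mathrm{spectrum}(u))\neq\min(\mathrm{spectrum}(u))$.

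Two minor remarks on presentation. First, your closing parenthetical ``(inherited by the extensions $\Log$ considered here)'' is unnecessary and slightly misleading: you only use Kripke completeness of $\mathbf{S4}$ itself for global consequence, and then the monotonicity $\vdash_{\mathbf{S4}}\;\subseteq\;\vdash_{\Log}$; no completeness of $\Log$ is required (indeed $\Log$ need not be Kripke complete). Second, the final sentence ``when $\phi$ is inconsistent in $\Log$ any $\theta$ serves'' is redundant --- your $\mathbf{S4}$--derivation already yields $\phi\vdash_{\Log}\Diamond\theta\land\Diamond\neg\theta$ regardless of whether $\phi$ is $\Log$--consistent. Neither point is a gap.
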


\begin{corollary}\label{M-P2}
Let $\Log$ be a logic extending $\mathbf {S4}$.
\begin{enumerate}
\item If  $\Log$ enjoys  projective unification and \texttt{M}: $\Box\Diamond p \implies  \Diamond\Box p$ is in $\Log$, then $\Log$ is structurally complete.
\item  If a rule  $r: \phi \slash \psi$ is passive in $\Log$, then $r$ is
derivable by an application of the rule $P_2$.
\end{enumerate}
\end{corollary}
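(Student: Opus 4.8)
The plan is to read both parts off Theorem~\ref{ASC-M} and Lemma~\ref{P2-M}, doing the easy half (2) first and feeding it into (1). For (2): a passive rule $r:\phi\slash\psi$ is by definition one whose premise $\phi$ is not $\Log$-unifiable (for logics over $\mathbf{S4}$ we may assume a single premise, since finite conjunctions are available). Lemma~\ref{P2-M} then produces a formula $\theta$ with $\phi\vdash_\Log\Diamond\theta\wedge\Diamond\neg\theta$. Applying the rule $P_2$ from \eqref{p2} — the instance with its schematic letters set to $\theta$ and $\psi$ — to the formula $\Diamond\theta\wedge\Diamond\neg\theta$ yields $\psi$. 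Hence $\phi\vdash\psi$ in the system $\Log$ augmented by $P_2$, i.e.\ $r$ is obtained by one application of $P_2$; no computation is needed here.

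For (1), since $\Log$ enjoys projective unification, the first half of Theorem~\ref{ASC-M} already gives that $\Log$ is almost structurally complete. To promote this to structural completeness I would use the second half of Theorem~\ref{ASC-M}: it suffices to show that every formula $\phi$ that is not $\Log$-unifiable is inconsistent in $\Log$. So fix such a $\phi$. By Lemma~\ref{P2-M} there is a $\theta$ with $\phi\vdash_\Log\Diamond\theta\wedge\Diamond\neg\theta$, and then $\phi$ is inconsistent as soon as $\Diamond\theta\wedge\Diamond\neg\theta$ is inconsistent in $\Log$, equivalently as soon as $\vdash_\Log\Box\neg\theta\vee\Box\theta$, equivalently as soon as the rule $P_2$ itself is derivable in $\Log$. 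In other words, structural completeness reduces, via ASC together with part (2), to the single assertion that $P_2$ is derivable in $\Log$; this is the point at which the hypothesis $\texttt{M}\in\Log$ must be spent.

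The step I expect to be the main obstacle is precisely this last one: showing that $\texttt{M}$ — i.e.\ $\Box\Diamond p\to\Diamond\Box p$ — together with reflexivity and transitivity and the structural restrictions that projective unification places on the (finite) Kripke models of $\Log$, forces $P_2$ to become derivable, equivalently that no $\Log$-consistent formula is left non-unifiable. The natural route is to read off the frame condition of $\texttt{M}$ in the presence of $\mathbf{S4}$, namely that every point reaches a simple reflexive endpoint, and to argue that in the models relevant to $\Log$ this collapses the behaviour witnessing $\Diamond\theta$ and $\Diamond\neg\theta$ to a single reflexive world, where $\Box\theta\vee\Box\neg\theta$ is forced. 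Once that is in place, $\phi$ is inconsistent, Theorem~\ref{ASC-M} delivers structural completeness, and (2) follows either as a by-product or by the direct argument above; the one bookkeeping point to keep in mind throughout is the legitimacy of reducing a passive rule to a single-premise one over $\mathbf{S4}$.
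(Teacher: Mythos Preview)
Your argument for part (2) is correct and is exactly the paper's argument.

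For part (1), however, there is a genuine gap. You reduce structural completeness to the derivability of $P_2$ in $\Log$, equivalently to the inconsistency of $\Diamond\theta\wedge\Diamond\neg\theta$ in $\Log$. But this is simply false under the hypotheses: take $\Log=\mathbf{S4.3M}$, which contains \texttt{M} and (by Theorem~\ref{main1}) enjoys projective unification. The two-element chain $\F_2$ is a frame for $\mathbf{S4.3M}$, and at its root the formula $\Diamond p\wedge\Diamond\neg p$ is satisfiable. Hence $P_2$ is \emph{not} derivable in $\mathbf{S4.3M}$, and your proposed frame-theoretic argument cannot succeed: the ``structural restrictions that projective unification places'' on $\Log$ amount only to $\mathbf{S4.3}\subseteq\Log$, and $\mathbf{S4.3M}$ already witnesses that this together with \texttt{M} does not kill $\Diamond p\wedge\Diamond\neg p$.

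What you are missing is one application of the necessitation rule, which is part of the consequence relation $\vdash_\Log$. From $\phi\vdash_\Log\Diamond\theta\wedge\Diamond\neg\theta$ one obtains $\phi\vdash_\Log\Box(\Diamond\theta\wedge\Diamond\neg\theta)$, i.e.\ $\phi\vdash_\Log\Box\Diamond\theta\wedge\Box\Diamond\neg\theta$. Since $\Box\Diamond\neg\theta=\neg\Diamond\Box\theta$, this is precisely $\phi\vdash_\Log\neg(\Box\Diamond\theta\to\Diamond\Box\theta)$, the negation of an instance of \texttt{M}. As \texttt{M}$\in\Log$, this gives $\phi\vdash_\Log\bot$, so $\phi$ is inconsistent and Theorem~\ref{ASC-M} applies. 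The point is that $\Diamond\theta\wedge\Diamond\neg\theta$ need not be refutable, but its \emph{necessitation} is, by \texttt{M}; projective unification plays no role in this step beyond what Theorem~\ref{ASC-M} already uses.
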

\begin{proof}
1. Assume that $\Log \supseteq \mathbf {S4}$ enjoys projective unification,  \texttt{M}: $\Box\Diamond p \implies  \Diamond\Box p$ is in $\Log$ and $\phi$ is not unifiable in $\Log$. Then, by Lemma~\ref{P2-M},   $\phi \vdash_\Log  \Diamond \theta \land \Diamond \neg \theta$, for some $\theta$. Applying the necessitation rule we obtain $\phi~\vdash_\Log~\Box \Diamond \theta \land \Box \Diamond \neg \theta$, i.e. $\phi \vdash_\Log  \neg
( \Box \Diamond \theta \rightarrow  \Diamond \Box \theta)$, which is the negation of an instance of \texttt{M}. This, together with \texttt{M} $\in \Log$, shows that $\phi\vdash_\Log \psi$ for every formula $\psi$. Finally, Theorem \ref{ASC-M} yields that $\Log$ is SC.

2.  Let $r: \phi \slash \psi$ be passive in $\Log$, i.e. $\phi$ is not unifiable in $\Log$. By Lemma \ref{P2-M},  $\phi \vdash_\Log  \Diamond \theta \land \Diamond \neg \theta$ for some formula $\theta$. Now, by an application of $P_2: \Diamond \theta \land \Diamond \neg \theta \slash \psi $, we obtain  $\phi \vdash_\Log \psi $, i.e.  $r$ is derivable.
\end{proof}
 In~\cite[3.19]{dw12} (see also~\cite{DzW11}) it is shown that
\begin{theorem}\label{main1}  A modal logic $\Log$ containing $\mathbf {S4}$ enjoys projective unification if and only if $\mathbf {S4.3} \subseteq \Log $.
\end{theorem}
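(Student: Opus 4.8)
The plan is to prove the two implications separately; essentially all the content is in the ``if'' direction, so I treat that first, after isolating a convenient reformulation. Recall that $\Log$ \emph{enjoys projective unification} exactly when every $\Log$-unifiable formula is projective, and observe the following uniformity: for any normal $\Log\supseteq\mathbf{S4}$, a formula $\phi$ is $\Log$-unifiable if and only if it has a \emph{ground} unifier, equivalently if and only if $\phi$ is satisfiable in the one-element reflexive model $\mathbf 2$ (i.e.\ $\phi$ is $\mathbf{Triv}$-consistent). Indeed, given any unifier $\sigma$ with $\sigma\phi\in\Log$, post-composing with the substitution sending every remaining variable to $\top$ yields a ground substitution $\tau$ with $\tau\phi\in\Log$; since over $\mathbf{S4}$ every variable-free formula is provably equivalent to $\top$ or to $\bot$ and $\Log$ is consistent, $\tau\phi$ is in fact an $\mathbf{S4}$-theorem, and a ground substitution making $\phi$ an $\mathbf{S4}$-theorem is precisely a Boolean assignment satisfying $\phi$ in $\mathbf 2$. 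In particular ``$\phi$ is unifiable'' does not depend on which extension of $\mathbf{S4}$ one works in.

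For the implication $\mathbf{S4.3}\subseteq\Log\Rightarrow\Log$ enjoys projective unification, I would first reduce to $\Log=\mathbf{S4.3}$: if $\varepsilon$ is a projective unifier of $\phi$ with respect to $\mathbf{S4.3}$, then $\varepsilon\phi\in\mathbf{S4.3}\subseteq\Log$, and $\phi\vdash_{\mathbf{S4.3}}\varepsilon\psi\leftrightarrow\psi$ gives $\phi\vdash_{\Log}\varepsilon\psi\leftrightarrow\psi$, so $\varepsilon$ is projective for $\Log$ as well; combined with the uniformity of unifiability above (so a $\Log$-unifiable $\phi$ is already $\mathbf{S4.3}$-unifiable), it suffices to produce, for each $\mathbf{S4.3}$-unifiable $\phi$, a projective unifier with respect to $\mathbf{S4.3}$. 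Here I use the frame theory: the rooted $\mathbf{S4.3}$-frames are exactly the quasiorders whose cluster order is a (finite or infinite) \emph{chain}, so each finite rooted $\mathbf{S4.3}$-frame has a unique final cluster. Fixing a ground unifier $\sigma_0$ of $\phi$, one builds $\varepsilon$ so that in every $\mathbf{S4.3}$-model it ``retracts'' the model onto a model of $\phi$: on the region controlled by $\phi$ the original valuation is kept, and outside it $\sigma_0$ is blended in, the blending being coordinated through the unique final cluster (subformulas of the shape $\Box\Diamond(\cdot)$ and $\Box(\phi\to\cdot)$ let $\varepsilon$ refer to that cluster uniformly). The two requirements are then checked as follows. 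The projectivity equations $\phi\vdash_{\mathbf{S4.3}}\varepsilon\psi\leftrightarrow\psi$ reduce, by induction on $\psi$ using $\mathtt{K}$ and necessitation to push an equivalence through $\Box$, to the base case $\phi\vdash_{\mathbf{S4.3}}\varepsilon p\leftrightarrow p$, which holds because under the global hypothesis $\phi$ one has $\Box\phi,\Box\Box\phi,\dots$ available by necessitation and $\varepsilon p$ was arranged to collapse to $p$ in their presence. That $\varepsilon$ is a unifier, $\vdash_{\mathbf{S4.3}}\varepsilon\phi$, is verified semantically over finite chains of clusters (Kripke completeness of $\mathbf{S4.3}$): the $\phi$-region is a final segment of the chain on which the valuation, hence $\phi$, is unchanged, while on the part below it the valuation is governed by $\sigma_0$ and the fact that $\sigma_0\phi$ is a theorem.

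For the converse I argue by contraposition. If $\mathbf{S4.3}\not\subseteq\Log$, then the variety of closure algebras of $\Log$ is not contained in that of $\mathbf{S4.3}$, so it contains a subdirectly irreducible algebra refuting the equation corresponding to $\mathtt{H}$, whose dual is a rooted $\Log$-frame $\F$ that is \emph{not} connected: there are clusters $C,C'$ of $\F$, both reachable from the root, that are $R$-incomparable. From such a ``fork'' one reads off a formula $\phi$ that is $\Log$-unifiable (for instance, one true in the one-element model) but not projective: if $\varepsilon$ were a projective unifier, $\phi\vdash_{\Log}\varepsilon q\leftrightarrow q$ would force $\varepsilon q$ to agree with $q$ on every $\Log$-model of $\phi$, whereas on a suitable valuation of $\F$ the two incomparable clusters impose incompatible, definable demands on $q$ that no single substitution can meet while still unifying $\phi$. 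This is the standard obstruction to projectivity attached to the failure of $\mathtt{.3}$.

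The step I expect to be genuinely hard is the explicit construction of the projective unifier for $\mathbf{S4.3}$ and the verification $\vdash_{\mathbf{S4.3}}\varepsilon\phi$, together with a clean account of why \emph{connectedness}, and not merely \emph{convergence}, is what is required. The easy case $\mathbf{S5}$ --- where $\varepsilon p:=(\Box\phi\wedge p)\vee(\lnot\Box\phi\wedge\sigma_0 p)$ works because in $\mathbf{S5}$ clusters are generated subframes, so altering the valuation on the ``$\lnot\Box\phi$'' part is invisible from the ``$\Box\phi$'' part --- does not transfer to $\mathbf{S4.3}$: there a point below the $\phi$-region still sees the whole final segment, so this naive blend can fail, and repairing it is exactly where the uniqueness of the final cluster (i.e.\ $\mathtt{.3}$, which $\mathtt{.2}$ does not supply) enters. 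Once that construction is secured, the reductions of the first two paragraphs and the contrapositive argument are routine.
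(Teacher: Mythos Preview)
The paper does not prove this theorem; it is quoted as a known result from \cite[3.19]{dw12} (and \cite{DzW11}) with no argument given here, so there is no in-paper proof to compare against.

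On your attempt itself: the reductions in your first two paragraphs are correct and standard --- unifiability over consistent extensions of $\mathbf{S4}$ really is decided by ground substitutions (equivalently, by the one-point reflexive frame), and a projective unifier for $\mathbf{S4.3}$ remains projective for every $\Log\supseteq\mathbf{S4.3}$, so it suffices to treat $\Log=\mathbf{S4.3}$. Your informal picture of the projective unifier (retract onto the $\phi$-region, fill the rest using a fixed ground unifier $\sigma_0$, coordinate through the unique final cluster) is the right intuition, and you are right to flag the explicit construction and the verification of $\vdash_{\mathbf{S4.3}}\varepsilon\phi$ as the genuinely hard step; in \cite{dw12} this is carried out by an explicit iterated substitution whose correctness is checked on finite chains of clusters.

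The real gap is your converse. You assert that from a fork in a rooted $\Log$-frame ``one reads off a formula $\phi$'' that is unifiable but not projective, and that the two incomparable clusters ``impose incompatible, definable demands on $q$'' --- but you never write down $\phi$, never say what those demands are, and never argue why no single $\varepsilon$ can satisfy them while remaining a unifier. That is precisely the content of this direction, not a routine detail. The published route is different and cleaner: projective unification implies \emph{unitary} unification (a projective unifier is automatically a most general unifier), and one then exhibits, for any $\Log\supseteq\mathbf{S4}$ in which $\mathtt{.3}$ fails, a concrete unifiable formula with two $\preceq$-incomparable maximal unifiers (built from a fork pattern such as $\Diamond\Box p\wedge\Diamond\Box\neg p$), so unification is not unitary and a fortiori not projective. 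If you want to keep your direct approach you must actually produce the witness and the argument; as written, that paragraph is a promissory note rather than a proof.
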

Hence, together with~\cite[Corollary 4.2]{dw12},  Theorem \ref{ASC-M} and
Corollary \ref{M-P2} we obtain

\begin{corollary}\label{ASC log.3}
\begin{enumerate}
\item The logics $\mathbf{S4.3 B_2}$ and $\mathbf{S4.3DumB_2}$ as well as
all their extensions enjoy projective unification and are almost structurally complete. In other words, each admissible rule in $\mathbf{S4.3 B_2}$ (in  $\mathbf{S4.3DumB_2}$) is derivable  in $\mathbf{S4.3 B_2}$ (in  $\mathbf{S4.3DumB_2}$) or passive (and then derivable by  the rule $P_2$).
\item The logics $\mathbf{S4.3MB_2}$ and $\mathbf{S4.3MDumB_2}$ as well as all their extensions enjoy projective unification and are structurally complete. In other words, each admissible rule in $\mathbf{S4.3MB_2}$ (in
 $\mathbf{S4.3M DumB_2}$) is derivable  in $\mathbf{S4.3MB_2}$ (in  $\mathbf{S4.3M DumB_2}$).
\end{enumerate}
\end{corollary}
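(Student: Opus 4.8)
The plan is to read the statement off from Theorem~\ref{main1}, Theorem~\ref{ASC-M}, Corollary~\ref{M-P2} and \cite[Corollary~4.2]{dw12}; essentially the only work is to check that the four named logics, together with all of their normal extensions, satisfy the hypotheses of those results.

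For part~(1) I would first observe that $\mathbf{S4.3B_2}$ is $\mathbf{S4.3}$ together with the axiom \texttt{B}$_2$ and that $\mathbf{S4.3DumB_2}$ is $\mathbf{S4.3}$ together with \texttt{Dum} and \texttt{B}$_2$, so both contain $\mathbf{S4.3}$; and any normal extension of a logic above $\mathbf{S4.3}$ is again above $\mathbf{S4.3}$. Theorem~\ref{main1} then yields projective unification for all of these logics, and Theorem~\ref{ASC-M} (equivalently \cite[Corollary~4.2]{dw12}) gives that each of them is almost structurally complete. For the reformulation in terms of rules I would use that, over an $\mathbf{S4}$-logic, a rule $r\colon\phi/\psi$ is passive precisely when $\phi$ is not unifiable: an admissible rule is then either non-passive, hence derivable by ASC, or passive, in which case Corollary~\ref{M-P2}(2) makes it derivable by a single application of $P_2$.

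For part~(2) I would additionally note that $\mathbf{S4.3MB_2}$ and $\mathbf{S4.3MDumB_2}$ contain not only $\mathbf{S4.3}$ but also the axiom \texttt{M}, and that both properties are inherited by normal extensions. Projective unification again follows from Theorem~\ref{main1}, and since \texttt{M} is present, Corollary~\ref{M-P2}(1) strengthens almost structural completeness to structural completeness, so that every admissible rule---passive or not---is outright derivable.

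I do not expect a real obstacle here: the argument is essentially bookkeeping with the cited theorems. The only point requiring a moment's care is the closure of the relevant families of logics under extension (so that the clause ``as well as all their extensions'' is covered) and, for part~(2), verifying that \texttt{M} genuinely belongs to the two logics in question. It is also worth recording explicitly why part~(1) cannot in general be strengthened to structural completeness---for instance, $\mathbf{S5}$ is a normal extension of $\mathbf{S4.3B_2}$ in which the passive rule $P_2$ is admissible but not derivable---which is exactly what forces the ``or passive'' alternative in the formulation of~(1).
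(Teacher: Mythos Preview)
Your proposal is correct and matches the paper's approach exactly: the paper derives the corollary directly from Theorem~\ref{main1}, Theorem~\ref{ASC-M}, Corollary~\ref{M-P2}, and \cite[Corollary~4.2]{dw12}, which is precisely the bookkeeping you outline. Your additional remark that, e.g., $\mathbf{S5}$ witnesses non-derivability of $P_2$ anticipates the paper's subsequent comment that neither $\mathbf{S4.3B_2}$ nor $\mathbf{S4.3DumB_2}$ is structurally complete.
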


Some further remarks are in order:
\begin{enumerate}
\item Neither $\mathbf{S4.3 B_2}$ nor $\mathbf{S4.3DumB_2}$  are structurally complete (the premise of the rule $P_2$ is consistent).
\item ASC for $\mathbf{S4.3 B_2}$ and $\mathbf{S4.3DumB_2}$  holds only for rules with \emph{finite} premises, in accordance with the standard definition. To show that the extension of ASC for rules with infinite premises does not hold, let us consider the rule with the scheme
\begin{gather}
\frac{ \{\Box(\phi_i \leftrightarrow \phi_j)\implies \phi_0 : 0 < i < j <
\omega \}}{\phi_0}
\end{gather}
The rule is not passive. Observe that for each finite modal algebra $B$,
if  $\phi_0 $ is false in $B$, then $\{\Box (\phi_i \leftrightarrow \phi_j)\implies \phi_0  : 0 < i < j < \omega\}$ is false in $B$; in other words, whenever premises are valid in $B$, the conclusion is also valid in $B$. Since  both $\mathbf{S4.3 B_2}$ and $\mathbf{S4.3DumB_2}$ have the finite model property,  the rule  is admissible in  both logics.  But $p_0$ can be derived neither in $\mathbf{S4.3 B_2}$ nor in $\mathbf{S4.3DumB_2}$ from any finite subset of the set $\{\Box(p_i \leftrightarrow p_j)\implies p_0 : 0 < i < j  < \omega \}$, hence the rule is not derivable.
\end{enumerate}

Recall that for a variety $\mathsf{V}$ of algebras, $\mathcal F_{\mathsf{V}}(\lambda)$ denotes its $\lambda$-generated free algebra. Since in our context $\mathcal F_{\mathsf V}(0) = \2$, we obtain the following Corollary from Theorem  \ref{algASC}, Corollary \ref{ASC log.3},  and\cite{dw12}
\begin{corollary}
\begin{enumerate}
\item  For $\mathsf{V} \in \{\Eq(\ima),  \Eq(\mma) \}$ and  for every finite subdirectly irreducible algebra $B$ \ in $\mathsf{V}$,  $B~\times~\2$
embeds into  $\mathcal F_{\mathsf V}(\omega)$.
\item For $\mathsf{V} \in \{\Eq(\fma) \land \Eq(\mma), \Eq(\gma) \land \Eq(\ima) \}$, every finite subdirectly irreducible algebra $B \in \mathsf{V}$  embeds into  $\mathcal F_{\mathsf V}(\omega)$.
\end{enumerate}
\end{corollary}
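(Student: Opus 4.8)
The plan is to obtain both statements directly from the algebraic characterisations of structural completeness and almost structural completeness in Theorem~\ref{algASC}, the unification results collected in Corollary~\ref{ASC log.3}, and the dictionary between the four varieties and their determining logics. Throughout one uses that each of $\Eq(\ima)$, $\Eq(\mma)$, $\Eq(\fma)$, $\Eq(\gma)$ is locally finite, as established in the previous sections, so that Theorem~\ref{algASC} applies to the relevant subvarieties, and that the free closure algebra on no generators is $\2$, as noted just before the statement.

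For part~(1), first I would record that $\Eq(\ima)$ determines $\mathbf{S4.3DumB_2}$ and $\Eq(\mma)$ determines $\mathbf{S4.3B_2}$ (Table~\ref{tab:clsi}). Both logics extend $\mathbf{S4.3}$, hence by Corollary~\ref{ASC log.3}(1) they enjoy projective unification and are almost structurally complete. Since the corresponding varieties are locally finite, Theorem~\ref{algASC}(2) says exactly that ASC is equivalent to $\mathfrak A \times \mathcal F(0)$ embedding into $\mathcal F(\omega)$ for every finite subdirectly irreducible $\mathfrak A$; substituting $\mathcal F_{\mathsf V}(0) = \2$ gives the asserted embedding of $B \times \2$ into $\mathcal F_{\mathsf V}(\omega)$.

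For part~(2), the varieties $\Eq(\fma) \land \Eq(\mma)$ and $\Eq(\gma) \land \Eq(\ima)$ determine $\mathbf{S4.3MB_2}$ and $\mathbf{S4.3MDumB_2}$, respectively. The first identification is the theorem computing $\Eq(\mma) \land \Eq(\fma)$; for the second, I would use $\Eq(\gma) \subseteq \Eq(\fma)$ (a consequence of $\Si(\gma) \subseteq \Si(\fma)$), which gives $\Eq(\gma) \land \Eq(\ima) \subseteq \Eq(\fma) \land \Eq(\ima) = \Eq(\F_2^+)$, while $\F_2^+$ belongs to all four classes, so the inclusion is an equality; and $\Eq(\F_2^+)$ determines $\mathbf{S4.3MDumB_2}$. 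Both $\mathbf{S4.3MB_2}$ and $\mathbf{S4.3MDumB_2}$ contain $\mathbf{S4.3}$ together with the axiom \texttt{M}, so Corollary~\ref{ASC log.3}(2) (equivalently Corollary~\ref{M-P2}(1)) yields that they are structurally complete. Applying Theorem~\ref{algASC}(1), SC is equivalent to every finite subdirectly irreducible algebra embedding into $\mathcal F(\omega)$, which is the claim.

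Since every ingredient is already in place, there is no real obstacle; the only point that needs care is the bookkeeping matching each variety with its determining logic — in particular the identification $\Eq(\gma) \land \Eq(\ima) = \Eq(\F_2^+)$ — and checking that the relevant varieties are locally finite so that Theorem~\ref{algASC} is applicable.
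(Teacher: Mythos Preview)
Your proposal is correct and follows essentially the same route as the paper, which simply cites Theorem~\ref{algASC}, Corollary~\ref{ASC log.3}, and~\cite{dw12} together with $\mathcal F_{\mathsf V}(0)=\2$. You add the explicit identification $\Eq(\gma)\land\Eq(\ima)=\Eq(\F_2^+)$ (hence the logic $\mathbf{S4.3MDumB_2}$), which the paper leaves implicit via Figure~\ref{fig:meet}; this extra bookkeeping is sound and makes the application of Corollary~\ref{ASC log.3}(2) transparent.
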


Next, we consider (almost) structural completeness in quasivarieties $\Q_\Log $ determined by logics \Log.
A rule   $r: \phi_1, \dots, \phi_n \slash \psi$  in a logic $\Log$ can be translated into  a quasiidentity (quasiequation) in the language  of the class of algebras  corresponding to $\Log$ by $(\phi_1' = 1\; \&  \dots \& \; \phi_n' =1) \Rightarrow \psi' =1$; here, $\&$ and $\Rightarrow$ are  connectives of the metalanguage. In particular, for $r: \phi \slash \psi$ we have $\phi' = 1 \ \Rightarrow \psi' =1$. Under this translation, a rule $r: \phi_1, \dots, \phi_n \slash \psi$ is admissible in $L$ \tiff the quasiidentity $(\phi_1' = 1\; \&  \dots \& \; \phi_n' =1) \Rightarrow \psi' =1$ holds in the free algebra  $\mathcal F_{\mathsf{V_L}}(\omega)$, that is, in the Lindenbaum-Tarski algebra of $\Log$; $r$ is derivable in $\Log$ \tiff the quasiidentity $(\phi_1' = 1\; \&  \dots \& \; \phi_n' =1) \Rightarrow \psi' =1$ holds in the quasivariety $\Q_{\Log}$ corresponding to $\Log$, see e.g. \cite{ber88}. Hence, we have the following definition: A quasivariety $\Q$ is \emph{structurally complete} (SC), if  every quasiidentity which holds in  $\mathcal F_{\Q}(\omega)$ also holds in \Q\   see~\cite{ber88,ds16}. \Q\ is \emph{almost  structurally complete} (ASC), if  every active
quasiidentity which holds in  $\mathcal F_{\Q}(\omega)$ also holds in \Q\ , where $(\phi_1' = 1\; \&  \dots \& \; \phi_n' =1) \Rightarrow \psi' =1$ is active if  $\neg (\phi_1' = 1\; \&  \dots \& \; \phi_n' =1)$ does not hold in $\mathcal F_{\Q}(\omega)$,  see~\cite[Section 3]{ds16}.

It is well known see e.g. \cite[Proposition 1.2]{ber88}, that every consequence operation $\vdash$ (or the corresponding logic)  has a unique SC-extension $\vdash '$, that is, $\vdash '$  having the same set of theorems as $\vdash$  such that $\vdash'$ is SC. Note that $\vdash'$ extends $\vdash$ with the rules only. Hence every quasivariety $\Q$ has a unique SC subquasivariety $\Q'$ such that  $\Eq({\Q}) =  \Eq({\Q'})$. Below we describe the SC subquasivariety of \Q,  for those   {\Q} in the paper,  which are ASC. From  Corollary \ref{ASC log.3}  and  from \cite[Corollary 6.6]{DzW11} we have

\begin{corollary}\label{cor:ASC4}
The structurally complete subquasivariety  of {\sf Q}(\ima), respectively, {\sf Q}(\mma), is axiomatized by equations obtained from the axioms of $\mathbf{S4.3DumB_2}$, respectively, of $\mathbf{S4.3B_2}$, and the single quasiidentity  $(\Diamond x \land \Diamond \neg x \; = 1) \:  \Rightarrow \:  0 = 1$.
\end{corollary}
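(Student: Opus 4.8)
The plan is to obtain the statement by specializing the logic-level machinery already in place, chiefly Corollary~\ref{ASC log.3}(1), Corollary~\ref{M-P2}(2), the rule-to-quasiidentity dictionary recorded just before the statement, and \cite[Corollary~6.6]{DzW11}. First I would pass to consequence operations: the quasivariety $\mathsf{Q}(\ima)$ is the one attached to the standard consequence $\vdash_{\mathbf{S4.3DumB_2}}$ (Modus Ponens and Necessitation only) and $\mathsf{Q}(\mma)$ to $\vdash_{\mathbf{S4.3B_2}}$ --- and since every subdirectly irreducible member of $\Eq(\ima)$, resp.\ $\Eq(\mma)$, is an ideal, resp.\ a MaxId, algebra (Theorem~\ref{thm:idalg}(4), Corollary~\ref{cor:sifmoa}), these quasivarieties are just $\Eq(\ima)$, resp.\ $\Eq(\mma)$, regarded as quasivarieties, axiomatized by the equational forms of the corresponding modal axioms. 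Here the rules derivable in $\vdash$ are exactly the quasiidentities valid throughout the variety, while the rules admissible in $\vdash$ are the quasiidentities valid in $\mathcal F(\omega)$, i.e.\ in the Lindenbaum--Tarski algebra of the logic.

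Next I would identify the SC companion explicitly. By Corollary~\ref{ASC log.3}(1) both logics enjoy projective unification and are ASC, so an admissible rule is either active --- and then already derivable --- or passive; and by Corollary~\ref{M-P2}(2) every passive rule is derivable by a single application of the rule $P_2$ of \eqref{p2}. Consequently the consequence operation $\vdash'$ obtained by adjoining $P_2$ has the property that \emph{every} admissible rule is derivable, so $\vdash'$ is SC; moreover $\vdash'$ has the same theorems as $\vdash$, because the premise $\Diamond\phi\land\Diamond\neg\phi$ of $P_2$ is refuted already in the two-element algebra $\2$ (there $\Diamond$ is the identity, so $\Diamond\phi\land\Diamond\neg\phi$ always evaluates to $0$), whence no substitution instance of it is a theorem of either logic. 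Since the SC extension of a given consequence operation with the same theorems is unique, $\vdash'$ is that extension; this is precisely \cite[Corollary~6.6]{DzW11} applied here.

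It remains to translate $\vdash'$ back into an axiomatization of a quasivariety. Under the dictionary sending a rule to its quasiidentity, $P_2$, i.e.\ $\Diamond\phi\land\Diamond\neg\phi\slash\psi$ (equivalently, with $\bot$ as conclusion), becomes $(\Diamond x\land\Diamond\neg x = 1)\Rightarrow(0=1)$, while the modal axioms of $\mathbf{S4.3DumB_2}$, resp.\ $\mathbf{S4.3B_2}$, become the equations axiomatizing $\Eq(\ima)$, resp.\ $\Eq(\mma)$; hence the quasivariety presented by those equations together with that one quasiidentity is exactly the unique SC subquasivariety $\Q'$ with $\Eq(\Q')=\Eq(\ima)$, resp.\ $\Eq(\mma)$. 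As a sanity check, the added quasiidentity is a genuine quasiidentity and not a law: it fails in the complex algebra of a two-element cluster (take $X$ a singleton, so $\Diamond X=\Diamond(-X)=1$), which is a simple --- hence subdirectly irreducible --- member of $\ima$; thus $\Q'$ is properly contained in the variety while still generating it, consistently with uniqueness. The only real difficulty is bookkeeping: one must keep the consequence-operation and quasivariety levels synchronized and, in particular, make sure that adjoining $P_2$ delivers the \emph{unique} SC companion rather than merely some SC extension --- which is exactly where the facts ``ASC handles all active rules'' and ``$P_2$ handles all passive rules'' are combined.
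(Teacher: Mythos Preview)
Your proposal is correct and takes essentially the same route as the paper: the paper simply derives the corollary from Corollary~\ref{ASC log.3} together with \cite[Corollary~6.6]{DzW11}, and your argument unfolds exactly this derivation, additionally invoking Corollary~\ref{M-P2}(2) to make explicit that $P_2$ suffices to dispatch all passive rules. The extra bookkeeping you add (identifying $\mathsf{Q}(\ima)$ and $\mathsf{Q}(\mma)$ with the varieties, verifying that adjoining $P_2$ does not change the set of theorems, and the sanity check that the quasiidentity is not an equation) is sound and helpful but not strictly required beyond what the citations already cover.
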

 Observe that $(\Diamond x \land \Diamond \neg x \;= 1) \:  \Rightarrow
\:  0 = 1$ is the algebraic translation of the passive rule $P_2$, see,
e.g.~\cite[p. 530]{DzW11} which was introduced by Rybakov \cite{ryb97}.
Descriptions of admissible rules in  \fma \  and in \gma \  are rather complicated and the statements analogous to the descriptions above of structurally complete subquasivarieties do not hold.  For instance, unification in $\mathbf{S4MB_2}$ and in  $\mathbf{S4MDumB_2}$ is not unitary, hence
not projective, and  $\mathbf{S4MDumB_2}$ is not even almost structurally complete. According to  \cite[4.3.33]{ryb97}, the admissible rules in $\mathbf{S4MDumB_2}$ have no finite basis, indeed, no basis in finitely many variables. Hence a simple description of the structurally complete subquasivariety  of {\sf Q}(\gma) similar to Corollary \ref{cor:ASC4} is not possible.
Instead of $(\Diamond x \land \Diamond \neg x \; = 1) \:  \Rightarrow \:  0 = 1$ infinitely many complicated quasiidentities are needed.


\section{Summary and outlook}

We have investigated varieties of closure algebras of depth two, the canonical relations of which are the identity or the universal relation when restricted to the levels, and have described their associated logics.
We have also discussed the quasivarieties generated by the four classes of algebras and structural completeness. In future work we shall investigate algebras and logics of depth two whose canonical frames are irreflexive, in particular the ones connected to extensions of the provability logic \textbf{GL}.

\end{document}